\DeclareMathAlphabet{\mathcalligra}{T1}{calligra}{m}{n}
\DeclareFontShape{T1}{calligra}{m}{n}{<->s*[1.5]callig15}{}
\newtheorem{theorem}{Theorem}[section]
\newtheorem{lemma}[theorem]{Lemma}
\newtheorem{lem-def}[theorem]{Lemma-definition}
\newtheorem{proposition}[theorem]{Proposition}
\newtheorem{prop-def}[theorem]{Proposition-definition}
\theoremstyle{definition}
\newtheorem{definition}[theorem]{Definition}
\newtheorem{example}[theorem]{Example}
\newtheorem{remark}[theorem]{Remark}
\newtheorem*{theoremstar}{Theorem}
\numberwithin{equation}{section}
\newtheorem{thm}{Theorem}[section] 
\theoremstyle{plain} 
\newcommand{\thistheoremname}{}
\newtheorem{genericthm}[thm]{\thistheoremname}
\newtheorem*{genericthm*}{\thistheoremname}
\newenvironment{namedthm*}[1]
  {\renewcommand{\thistheoremname}{#1}%
   \begin{genericthm*}}
  {\end{genericthm*}}
\newcommand{\CC} {\mathbb{C}}
\newcommand{\PP} {\mathbb{P}}
\newcommand{\ZZ} {\mathbb{Z}}
\newcommand {\shA} {\mathcal{A}}
\newcommand {\shB} {\mathcal{B}}
\newcommand {\shC} {\mathcal{C}}
\newcommand {\shD} {\mathcal{D}}
\newcommand {\shH} {\mathcal{H}}
\newcommand {\shJ} {\mathcal{J}}
\newcommand {\shK} {\mathcal{K}}
\newcommand {\shT} {\mathcal{T}}
\newcommand {\shP} {\mathcal{P}}
\newcommand {\sD} {\mathscr{D}}
\newcommand {\sE} {\mathscr{E}}
\newcommand {\sO} {\mathscr{O}}
\newcommand {\foa}  {\mathfrak{a}}
\newcommand {\cone} {\operatorname{cone}}
\newcommand {\eps} {\varepsilon}
\newcommand{\sExt}{\mathscr{E} \kern -3pt xt}
\newcommand {\Gr} {\operatorname{Gr}}
\newcommand {\Hom} {\operatorname{Hom}}
\newcommand {\sHom}{\mathscr{H}\kern-5pt\mathcalligra{om}}
\newcommand {\id} {\operatorname{id}}
\newcommand {\Id} {\operatorname{Id}}
\newcommand {\Ker} {\operatorname{Ker}}
\newcommand {\rank} {\operatorname{rank}}
\newcommand {\Spec} {\operatorname{Spec}}
\newcommand{\hpd}{{\natural}}
\newcommand{\ccap}{{\, \times^{}_{\PP(V^\vee)} \,}}
\newcommand{\ccup}{{\, \star \,}}
\newcommand{\rJ}{P}
\newcommand{\Perf}{\mathrm{Perf}}
\newcommand{\Bl}{\mathrm{Bl}}
\newcommand{\act}{\operatorname{act}}
\newcommand{\rBlA}{\widetilde{\shA}^{\rm ref}}
\newcommand{\join}{*}
\DeclareRobustCommand{\Sec}{\ifmmode\mathsection\else\textsection\fi}
\title[]{Categorical Duality between joins and intersections}
\author[Q.Y.\ JIANG, N.C.\ Leung]{Qingyuan Jiang, Naichung Conan Leung}
\address{Institute for Advanced Study, Einstein Drive, Princeton, NJ 08540, USA}\email{jiangqy@ias.edu}
\address{The Institute of Mathematical Sciences and Department of Mathematics,
The Chinese University of Hong Kong, Shatin, N.T., Hong Kong}\email{leung@math.cuhk.edu.hk}
\begin{document}

\begin{abstract}
\noindent Classically, the projective duality between joins of varieties and the intersections of varieties only holds in good cases. In this paper, we show that categorically, the duality between joins and intersections holds in the framework of homological projective duality (HPD) \cite{Kuz07HPD}, as long as the intersections have expected dimensions. 
This result together with its various applications provide further evidences for the proposal of homological projective geometry of Kuznetsov and Perry \cite{KP18}.
When the varieties are inside disjoint linear subspaces, our approach also provides a new proof of the main result ``formation of categorical joins commutes with HPD" of \cite{KP18}. We also introduce the concept of an $n$-HPD category, and study its properties and connections with joins and HPDs. 
\end{abstract}

\maketitle

\section{Introduction} \label{sec:intro} 
Linear duality is a natural reflexive correspondence between linear subspaces of a finite dimensional $k$-vector space $V$ (resp. projective space $\PP(V)$) and its dual vector space $V^\vee$ (resp. dual projective space $\PP(V^\vee)$). For a linear space $L \subset V$, its {\em  dual} or {\em orthogonal} linear subspace is defined to be $L^{\perp} : = \Ker\{V^\vee \to L^\vee\} \subset V^\vee$ (resp. $\PP(L)^\perp:=\PP(L^\perp) \subset \PP(V^\vee)$), and vice versa. Linear duality interchanges summations and intersections:
	\begin{align*}
	(L_1 + L_2)^\perp = L_1^\perp \cap L_2^\perp, \quad \text{and} \quad
\PP(L_1 + L_2)^\perp = \PP(L_1)^\perp \cap \PP(L_2)^\perp.
	\end{align*}

In projective geometry, linear duality can be remarkably extended to a reflexive correspondence, called {\em projective duality}, denoted by $X \mapsto X^\vee$, for all proper subvarieties of $\PP(V)$ and $\PP(V^\vee)$, see \cite{GKZ}.
The summation $ \PP(L_1 + L_2)$ becomes the {\em join} $X_1 \join X_2 \subset \PP(V)$ of two varieties $X_1,X_2 \subset \PP(V)$. It is natural to ask whether the following naive generalization
	$$(X_1 \join X_2)^\vee = X_1^\vee \cap X_2^\vee$$
holds. The answer is ``no" in general, see \S \ref{sec:intro:classical} for explanations and examples.

In this paper, we show above equality ``holds {\em categorically}". More precisely, in the homological framework, projective duality is replaced by another reflexive correspondence called {\em homological projective duality (HPD)}, introduced by Kuznetsov \cite{Kuz07HPD}, denoted by $X \mapsto X^\hpd$; and 
the join $X_1 \join X_2$ is replaced by {\em categorical join} $X_1 \ccup X_2$. Our main result is:
	\begin{theoremstar}[{\em HPD interchanges categorical joins and fiber products}] The following holds:
$$(X_1 \ccup X_2)^\hpd  \simeq X_1^\hpd \ccap X_2^\hpd,$$
provided that the intersections have expected dimensions. (See Main theorem in \S \ref{sec:intro:catjoin}.)
	\end{theoremstar}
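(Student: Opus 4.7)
The strategy is to invoke the characterization of HPD via its behavior under linear sections. Recall from Kuznetsov's fundamental theorem of HPD that, for a Lefschetz category $\shA$ over $\PP(V)$, the category $\shA^\hpd$ over $\PP(V^\vee)$ is characterized (up to Lefschetz equivalence) by the following property: for each admissible linear subspace $L \subset V^\vee$, the base change $\shA \times_{\PP(V)} \PP(L^\perp)$ admits a semiorthogonal decomposition whose ``primitive'' component is $\shA^\hpd \times_{\PP(V^\vee)} \PP(L)$, while the remaining components come tautologically from the Lefschetz filtration of $\shA$. Hence to prove
\[
(X_1 \ccup X_2)^\hpd \simeq X_1^\hpd \ccap X_2^\hpd,
\]
the plan is to endow the right-hand side with a natural Lefschetz structure over $\PP(V^\vee)$ and then verify the above defining compatibility with $\shA = X_1 \ccup X_2$.

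The verification naturally splits into three steps. First, I would exploit the fact that formation of categorical joins commutes with linear sections: for $L^\perp \subset V$ one has
\[
(X_1 \ccup X_2) \times_{\PP(V)} \PP(L^\perp) \;\simeq\; \bigl(X_1 \times_{\PP(V)} \PP(L^\perp)\bigr) \ccup \bigl(X_2 \times_{\PP(V)} \PP(L^\perp)\bigr),
\]
which should follow directly from the incidence-correspondence definition of $\ccup$. Second, I would apply the HPD theorem to $X_1$ and $X_2$ separately, extracting from each factor $X_i \times_{\PP(V)} \PP(L^\perp)$ a semiorthogonal decomposition whose primitive piece is $X_i^\hpd \times_{\PP(V^\vee)} \PP(L)$. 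Third, I would assemble these two decompositions inside the relative categorical join and identify the resulting primitive component with the base change of the fiber product $X_1^\hpd \times_{\PP(V^\vee)} X_2^\hpd = X_1^\hpd \ccap X_2^\hpd$.

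The main obstacle lies in this third step. Combining two Lefschetz semiorthogonal decompositions inside a relative categorical join produces a host of mixed components indexed by pairs of Lefschetz indices for $X_1$ and $X_2$, and one must execute a careful sequence of mutations to rearrange them into the desired shape with a single primitive piece corresponding to the fiber product. The hypothesis that the intersection $X_1^\hpd \cap X_2^\hpd$ has the expected dimension is precisely the transversality condition ensuring that the derived fiber product coincides with the naive one and that no spurious $\Tor$-corrections appear in the SOD matching; it is also what prevents the ``off-diagonal'' Lefschetz pieces from collapsing onto the primitive piece. Once compatibility is verified for all admissible $L$, the uniqueness part of the HPD characterization upgrades it to the claimed equivalence of Lefschetz categories. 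As a bonus, when $X_1$ and $X_2$ lie in disjoint linear subspaces the fiber product $X_1^\hpd \ccap X_2^\hpd$ should simplify to the categorical join $X_1^\hpd \ccup X_2^\hpd$, thereby recovering the main theorem of \cite{KP18} as a special case.
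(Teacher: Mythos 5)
There is a genuine gap, and it sits at the very first step: the claimed commutation of categorical joins with linear sections, $(X_1 \ccup X_2) \times_{\PP(V)} \PP(L^\perp) \simeq (X_1 \times_{\PP(V)} \PP(L^\perp)) \ccup (X_2 \times_{\PP(V)} \PP(L^\perp))$, is false, already classically. Take $X_1 = \{p_1\}$, $X_2 = \{p_2\}$ two distinct points of $\PP^2$ and $\PP(L^\perp)$ a line through neither: then $(X_1 \join X_2) \cap \PP(L^\perp)$ is a point, while $(X_1 \cap \PP(L^\perp)) \join (X_2 \cap \PP(L^\perp)) = \emptyset$. The categorical join does not repair this: the fiber of $\widetilde{P}(X_1,X_2) \to \PP(V)$ over a point $v$ records all pairs $(x_1,x_2)$ with $v$ on the line $\langle x_1, x_2\rangle$, not pairs with both $x_k$ lying in $\PP(L^\perp)$, so the base change of $X_1 \ccup X_2$ along $\PP(L^\perp) \subset \PP(V)$ is in no sense a join of the base changes of the $X_k$. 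Since your Steps 2 and 3 feed on this identification, the whole SOD-matching scheme collapses. A second, independent gap is your appeal to a ``uniqueness part of the HPD characterization'': the fundamental theorem of HPD is a \emph{consequence} of the definition of $X^\hpd$ as a subcategory of the universal hyperplane category, not a characterization, and no theorem in the literature asserts that a Lefschetz category over $\PP(V^\vee)$ whose linear sections match the predicted primitive parts for all $L$ must be the HPD. Even granting abstract equivalences section by section, you would still lack any mechanism producing a single $\PP(V^\vee)$-linear functor inducing them coherently. You also misplace the hypothesis: the expected-dimension condition in the theorem concerns $X_1 \times_{\PP(V)} X_2$ (it is what makes the blow-up $\widetilde{P}(X_1,X_2)$ and the relevant Tor-independence work in the \emph{construction} of $X_1 \ccup X_2$), not the intersection $X_1^\hpd \cap X_2^\hpd$ on the dual side, where the fiber product is a noncommutative one.

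For contrast, the paper explicitly avoids the fundamental theorem of HPD altogether. It introduces the double HPD category $\shC \subset D(\shH(X_1,X_2))$ as a universal intermediary and proves both sides equivalent to it: $(X_1 \ccup X_2)^\hpd \simeq \shC$ via the geometric picture in which the generalized universal hyperplane $\shH_{P(X_1,X_2)}$ of the abstract join maps birationally to $X_1 \times X_2 \times \PP(V^\vee)$, being a $\PP^1$-bundle exactly over $\shH(X_1,X_2)$, combined with the HPD-with-base-locus theorem (Thm.~\ref{thm:app:HPDbs}); and $\shC \simeq X_1^\hpd \ccap X_2^\hpd$ via the identity $\shH(X_1,X_2) = \shH_{X_1} \times_{\PP(V^\vee)} \shH_{X_2}$ and a direct comparison of semiorthogonal decompositions (Lem.~\ref{lem:sod:nH}). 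Your closing ``bonus'' also conflates the two duals in the split case: Thm.~\ref{thm:join-linear} identifies the fiber product of the \emph{ambient} duals $(X_k)^{\hpd}_{/\PP(V)}$ with the join of the \emph{small} duals $(X_k)^{\hpd}_{/\PP(V_k)}$, a distinction your formulation erases.
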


\subsection{Classical projective geometry} \label{sec:intro:classical} For a projective variety $X$ with a morphism to $\PP(V) = \PP^n$, to study the linear section of $X$ it is natural to consider the {\em universal hyperplane (section) for the morphism $X \to \PP(V)$}:
	$$\shH_{X} : = \{ (x, [H]) \mid x \in H\} \subset X \times \PP(V^\vee),$$
where $H \subset \PP(V)$ is a hyperplane, which is in one-to-one correspondence with a point $[H] \in \PP(V^\vee)$. The fiber of projection $\shH_X \to \PP(V^\vee)$ over $[H]$ is just the hyperplane section $X_{H}: = X \cap H$ of $X$. The {\em projective dual of $X \to \PP(V)$} is defined to the critical values of  the family $\shH_X \to \PP(V)$, which can be equivalently described as:
	$$X^\vee : = \{[H] \in \PP(V^\vee) \mid X \cap H ~~\text{non-transversely} \} \subset \PP(V^\vee).$$
 For smooth projective varieties $X_1,X_2 \subset \PP(V) = \PP^n$, the {\em join of $X_1$ and $X_2$} is defined to be the closure of union of lines connecting points of $X_1$ and $X_2$, i.e.:
 	$$X_1 \join X_2 : = \overline{\bigcup_{\substack{x_1 \in X_1, x_2 \in X_2, x_1 \neq x_2}} \langle x_1,x_2 \rangle} \subset \PP(V),$$
where $\langle x_1, x_2 \rangle \simeq \PP^1$ denotes the unique line passing through $x_1$ and $x_2$.

It is also useful to consider the {\em abstract join $P(X_1,X_2)$ of $X_1$ and $X_2$}, defined by:
	\begin{align*}
	P(X_1,X_2) := \PP_{X_1 \times X_2}(\sO(-H_1) \oplus \sO(-H_2)),
	\end{align*}
where $H_k$ denotes the pull-back of hyperplane class of $\PP(V)$ to $X_k \to \PP(V)$, $k=1,2$. $P(X_1,X_2)$ parametrizes abstractly all the lines $\langle x_1, x_2\rangle$ formed by points $x_1 \in X_1$ and $x_2 \in X_2$, hence is a $\PP^1$-bundle over $X_1 \times X_2$, and comes with a natural evaluation map to $\PP(V\oplus V)$, sending the fiber $\PP^1_{(x_1, x_2)}$ to the corresponding line $\langle (x_1,0), (0,x_2)\rangle$ of $\PP(V\oplus V)$. The image of the map is denoted by $R(X_1,X_2)$, called the {\em ruled join} of $X_1$ and $X_2$. By construction, there is canonical chain of (rational) maps connecting the various joins:
	$$P(X_1,X_2) \to R(X_1,X_2) \dashrightarrow X_1 \join X_2$$
where the latter is a generically finite rational map. See \S \ref{sec:classical_joins} for more details.
 
The relation between joins and projective duality is partially reflected in the famous {\em Terracini Lemma} of projective geometry, which can be translated as:
		$$(X_1 \join X_2)^\vee \subset X_1^\vee \cap X_2^\vee, \footnote{ Also, the lemma states that the expected relation holds for ``general points of $X_1 \join X_2$". More precisely, for an open dense subset $U$, every point $z \in U$ with $z\in \langle x_1, x_2\rangle \subset X_1 \join X_2$, the tangent space of $X_1\join X_2$ at $z$ is just the linear span of tangent spaces $T_{x_1} X_1$ and $T_{x_2} X_2$. }$$ 

The equality $(X_1 \join X_2)^\vee = X_1^\vee \cap X_2^\vee$ holds in good cases, for example, if $X_1, X_2$ are linear subspaces, or if $X_1$ and $X_2$ lie in disjoint linear subspaces. However in general, the following issues may prevent the equality from happening:
\begin{enumerate}
	\item The behaviour of $X_1 \join X_2$ is ``uncontrolled" along the intersection $X_1 \cap X_2$;
	\item For a general point $x \in X_1 \join X_2$, there can be more than one lines $\langle x_1, x_2\rangle$ passing through $x$, where $x_1 \in X_1$, $x_2 \in X_2$ .
	\end{enumerate}

\begin{example} Let $X_1 = Q \subset \PP^2$ a quadric, $X_2  = \{p\} \subset \PP^2$ such that $p \notin Q$. Then $X_1 \join X_2 = \PP^2$, and for a general point $x \in X_1 \join X_2$, there are two lines passing through $x$. This corresponds to the fact that the map $R(X_1,X_2) \to X_1 \join X_2$ is a finite map of degree two, which is ramified along the two tangent lines $\ell_1, \ell_2$ of $Q$ passing through $p$. Then intersection of the duals $X_1^\vee \cap X_2^\vee = \{p\}^{\perp} \cap Q^\vee = \{[\ell_1], [\ell_2]\}$ are the two points corresponding to $\ell_1, \ell_2$. Then $(X_1 \join X_2)^\vee =( \PP^2)^\vee \neq X_1^\vee \cap X_2^\vee$. However $R(X_1,X_2)^\vee = X_1^\vee \cap X_2^\vee$ holds. 
\end{example}

The example shows that the equality may fail even when $X_1$ and $X_2$ are disjoint. However it also hints that the failure can in certain cases be corrected, if we resolve the second issue $(2)$ by taking finite morphism $R(X_1,X_2) \to X_1 \join X_2$ into considerations. 

\medskip
We will see in {\em homological} framework, for categorical join, the issue $(2)$ is resolved by a similar manner, and issue $(1)$ is resolved by ``refined blowing up" along intersection. Hence it is reasonable to expect that the duality between join and intersection may hold categorically.

\subsubsection*{Convention for the introduction} 
In the rest of introduction, we will state the constructions and results using {\em Kontsevich's convention} of noncommutative algebraic geometry (see, for example, \cite{KR}), and use $X, X_1,X_2, \ldots$ to denote {\em noncommutative varieties}, by which we mean admissible triangulated subcategories of the derived categories of projective varieties (with proper enhancements, $dg$-enhanced or stable $\infty$-enriched). For a commutative scheme $S$, a $S$-linear noncommutative variety $X \to S$ means a $S$-linear category; the fiber product $X_1 \times_S X_2$ of $X_1,X_2 \to S$ over $S$ means tensor product of categories over $S$, etc. The precise definitions will be reviewed in \S \ref{sec:bc}. The readers may regard them as honest varieties. In the main part of this paper, the definitions, results and proofs will be given for categories.

\subsection{Homological projective duality} 
The homological framework for projective geometry is set up by Kuznetsov \cite{Kuz07HPD} in his study of the theory of homological projective duality (HPD). The input data of HPD theory are {\em Lefschetz varieties} or {\em Lefschetz categories}. A Lefschetz variety (or category) is a smooth (possibly noncommutative) variety $X \to \PP(V)$ together with a semiorthogonal decomposition of the form:
	$$D(X) = \langle \shA_0, \shA_1(H), \ldots, \shA_{m-1}((m-1)H)\rangle,$$
where $\shA_0 \supset \cdots \shA_{m-1}$ is a chain of subcategories, $\shA_k(k H)$ denotes the image of $\shA_k$ under the autoequivalence $\otimes \sO(kH)$, $H$ is be hyperplane class of $\PP(V)$, see \S \ref{sec:lef}.

The output of HPD theory is the {\em HPD variety} (or more generally, {\em HPD category}) $X^\hpd \to \PP(V^\vee)$, which is defined as a $\PP(V^\vee)$-linear component of the derived category of universal hyperplane $\shH_X$, which captures the ``categorical change" of the family $D(\shH_X)$. The HPD variety $X^\hpd$ is a homological modification of the classical projective dual $X^\vee$. HPD is a reflexive correspondence $(X^\hpd)^\hpd \simeq X$. See \S \ref{sec:HPD} for more details.

\subsection{Categorical joins} \label{sec:intro:catjoin}In \cite{KP18}, Kuznetsov and Perry defines the categorical join $\shJ(X_1,X_2) \to \PP(V)$ for Lefschetz varieties $X_k \to \PP(V)$ which are supported in disjoint linear subspaces, i.e. $X_k \to \PP(V_k)$, $k = 1,2$, and $V = V_1 \oplus V_2$. 

In this paper we extend the definition to general Lefschetz varieties $X_1,X_2 \to \PP(V)$. In order to deal with intersection $X_1 \times_{\PP(V)} X_2$, we consider the blow up of the abstract join $P(X_1,X_2)$ along the intersection:
	$$ \widetilde{P}(X_1,X_2) := \Bl_{f^{-1}(X_1 \times_{\PP(V)} X_2)} P(X_1,X_2),$$
where $f$ is the map $P(X_1,X_2) \to R(X_1,X_2)$. Then $\widetilde{P}(X_1,X_2)$ is equipped with a map to $\PP(V)$. For simplicity from now on we will assume the fiber products $X_1 \times_{\PP(V)} X_2$ is of expected dimension, otherwise we will need to consider {\em derived} fiber product instead of ordinary one.

 The {\em categorical join $X_1\ccup X_2$ of $X_1$ and $X_2$} is a $\PP(V)$-linear subcategory of $\widetilde{P}(X_1,X_2)$ defined by the following three steps (see \S \ref{sec:cat_joins} and Def. \ref{defn:cat_joins}):

\begin{enumerate}
	\item[Step 1.] Following \cite{KP18}, we can define first the {\em categorical ruled join} $\shJ(X_1,X_2)$ as a $\PP(V\oplus V)$-linear subcategory of $D(P(X_1,X_2))$, which is a homological modification of the ruled join $R(X_1,X_2) \subset \PP(V \oplus V)$.
	\item[Step 2.] Then we blow-up the categorical ruled join $\shJ(X_1,X_2)$ along the intersection, hence obtain a $\PP(V)$-linear category $\widetilde{\shJ}(X_1,X_2) \subset D(\widetilde{P}(X_1,X_2))$.
	\item[Step 3.] Finally we remove canonically the redundant components of $\widetilde{\shJ}(X_1,X_2)$ and obtain a $\PP(V)$-linear category $X_1 \ccup X_2$, which is defined to be the {\em categorical join}.
\end{enumerate}

We show that the categorical join $X_1 \ccup X_2$ is a $\PP(V)$-linear (moderate) Lefschetz category, with Lefschetz components explicitly given by Prop. \ref{prop:lef:cat_join}. In the case when $X_1$ and $X_2$ are disjoint $X_1 \times_{\PP(V)} X_2= \emptyset$, this definition agrees with the one given in \cite{KP18}. 

Our main result is the duality of categorical join and fiber products:

\begin{namedthm*}{Main theorem}[See Thm. \ref{thm:main}] \label{thm:intro}  Let $X_1,X_2 \to \PP(V)$ be Lefschetz varieties such that $X_1 \times_{\PP(V)} X_2$ is of expected dimension. Then there is a natural equivalence \footnote{Note in this introduction we are using Kontsevich's noncommutative geometry convention. R.H.S is the fiber product of noncommutative varieties. If $X_k^\hpd$'s are commutative varieties, then R.H.S. is equal to the derived category $D(X_1^\hpd \ccap X_2^\hpd)$ if the (underived) fiber product $X_1^\hpd \ccap X_2^\hpd$ is of expected dimension.}:
$$ (X_1 \ccup X_2)^\hpd  \simeq X_1^\hpd \ccap X_2^\hpd. 
$$
\end{namedthm*}

In the case when the intersection $X_1 \times_{\PP(V)} X_2 \neq \emptyset$, the formulation of the main theorem itself has direct nontrivial consequences.

\begin{example}[see Ex. \ref{eg:Gr(2,5)} \& \ref{eg:Gr-dual}] Let $X_1 = \Gr(2,5)\subset \PP^9$ (by Pl\"ucker embedding), and $X_2 = g \cdot \Gr(2,5)$ where $g$ is a generic element of ${\rm PGL}(10,\CC)$. Then the categorical joins is
	$$X_1 \ccup X_2 = \Gr(2,5) \ccup (g \cdot \Gr(2,5)) = \big \langle \sE, \sE(H), \ldots, \sE(8H)\big \rangle,$$
	where $\sE = D(X_1\cap X_2)$ is the derived category of the Calabi-Yau threefold $X_1 \cap X_2 =\Gr(2,5) \cap g \cdot \Gr(2,5)$. The theorem implies $X_1 \ccup X_2$ is HPD to the intersection $X_1^\vee \cap X_2^\vee$, which is another Calabi-Yau threefold that is {\em not birationally equivalent} to $X_1 \cap X_2$ (see \cite{OR, BCP}). Therefore our main theorem implies, and provides another proof (cf. \cite{JLX17, KP18}) of the fact that $D(X_1 \cap X_2) \simeq D(X_1^\vee \cap X_2^\vee)$.
\end{example}

In the disjoint case $X_1 \times_{\PP(V)} X_2= \emptyset$, the categorical joins and intersections of HPD admit concrete descriptions (which are related by HPD by our main theorem), see \S \ref{sec:disjoint-sit}, Thm. \ref{thm:min-rep} and Thm. \ref{cor:n-HPD}. These results hold for general $n$.

The splitting case is an important situation when $X_1 \times_{\PP(V)} X_2= \emptyset$ holds, where one assumes that $X_k \to \PP(V_k) \subset \PP(V)$, $k = 1,2$, and $V = V_1 \oplus V_2$. Then there are two possible HPDs of $X_k$: one is the HPD over the ambient space $\PP(V)$, denoted by $ (X_{k})_{/\PP(V)}^{\hpd}$; one is the HPD over $\PP(V_k)$, denoted by $ (X_{k})_{/\PP(V_k)}^{\hpd}$. We show that they are related by:

\begin{theoremstar}(See Thm. \ref{thm:join-linear}, \ref{thm:cone}) If $X_k \to \PP(V_k) \subset \PP(V)$ are Lefschetz varieties, $k=1,2$ and $V = V_1 \oplus V_2$. There are natural equivalences of categories:
\begin{align*}
	&(1) \quad (X_{1})_{/\PP(V)}^{\hpd} \simeq  (X_{1})_{/\PP(V_1)}^{\hpd} \ccup \PP(V_2^\vee) \quad\text{and}\quad  (X_{2})_{/\PP(V)}^{\hpd} \simeq  (X_{2})_{/\PP(V_2)}^{\hpd} \ccup \PP(V_1^\vee);\\
	&(2) \quad  (X_{1})_{/\PP(V)}^{\hpd} \ccap  (X_{2})_{/\PP(V)}^{\hpd}  \simeq  (X_{1})_{/\PP(V_1)}^{\hpd}\ccup  (X_{1})_{/\PP(V_2)}^{\hpd};\\
	&(3) \quad  (X_1 \ccup X_2)_{/\PP(V)}^{\hpd} \simeq  (X_{1})_{/\PP(V_1)}^{\hpd} \ccup  (X_{2})_{/\PP(V_2)}^{\hpd}.
	\end{align*}
\end{theoremstar}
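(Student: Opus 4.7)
The plan is to prove the three assertions in the order (1), (2), (3). The first equivalence (1) is the key technical statement and will be proved directly from the definition of HPD; assertion (2) then follows from (1) combined with a ``categorical join-intersection identity in disjoint subspaces''; and (3) is a formal consequence of (2) together with the Main Theorem.

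For (1), I would analyze the universal hyperplane $\shH_{X_1/\PP(V)} \subset X_1 \times \PP(V^\vee)$ directly. Writing $V^\vee = V_1^\vee \oplus V_2^\vee$ and observing that $X_1 \subset \PP(V_1)$ is annihilated by the $V_2^\vee$-summand, the defining section of $\sO_{X_1}(H) \boxtimes \sO_{\PP(V^\vee)}(1)$ cutting out $\shH_{X_1/\PP(V)}$ has vanishing $V_2^\vee$-component. Consequently the projection $\shH_{X_1/\PP(V)} \to \PP(V^\vee)$ has fibers that degenerate to all of $X_1$ over $\PP(V_2^\vee)$ and are honest hyperplane sections of $X_1 \subset \PP(V_1)$ away from $\PP(V_2^\vee)$. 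This matches precisely the evaluation target of the abstract-join construction underlying the categorical join $(X_1)^\hpd_{/\PP(V_1)} \ccup \PP(V_2^\vee)$; unpacking the definitions and comparing Lefschetz decompositions identifies the two $\PP(V^\vee)$-linear admissible subcategories defining the two sides of (1). The equivalence for $X_2$ follows by symmetry.

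For (2), applying (1) to both $X_k$ rewrites the left-hand side as
\[
\bigl[(X_1)^\hpd_{/\PP(V_1)} \ccup \PP(V_2^\vee)\bigr] \ccap \bigl[(X_2)^\hpd_{/\PP(V_2)} \ccup \PP(V_1^\vee)\bigr],
\]
so (2) reduces to the categorical join-intersection identity
\[
\bigl[A \ccup \PP(V_2^\vee)\bigr] \ccap \bigl[B \ccup \PP(V_1^\vee)\bigr] \simeq A \ccup B
\]
for Lefschetz categories $A \to \PP(V_1^\vee)$ and $B \to \PP(V_2^\vee)$. Classically this reflects the fact that the two cones $A \join \PP(V_2^\vee)$ and $B \join \PP(V_1^\vee)$ inside $\PP(V^\vee)$ intersect along precisely $A \join B$. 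Categorically, it should follow from the explicit disjoint-case descriptions of categorical joins (Thm.~\ref{thm:min-rep}), together with direct manipulation of the associated abstract joins, blow-ups, and fiber products over $\PP(V^\vee)$. Once (2) is in place, (3) is immediate: since $X_1 \subset \PP(V_1)$ and $X_2 \subset \PP(V_2)$ lie in disjoint linear subspaces, $X_1 \cap X_2 = \emptyset$ trivially has expected dimension, and the Main Theorem yields $(X_1 \ccup X_2)^\hpd \simeq (X_1)^\hpd_{/\PP(V)} \ccap (X_2)^\hpd_{/\PP(V)}$, which combined with (2) gives (3).

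The principal obstacle is Step 1: matching the semiorthogonal decomposition defining the HPD of $X_1 \to \PP(V)$ with the multi-step construction (abstract join, blow-up, removal of redundant components) that defines $(X_1)^\hpd_{/\PP(V_1)} \ccup \PP(V_2^\vee)$. The underlying geometric identification of $\shH_{X_1/\PP(V)}$ with the relevant join-type scheme is essentially a direct computation, but turning this into an equivalence of admissible subcategories requires careful bookkeeping of Lefschetz components, their twists along the join projection, and the admissibility conditions. The join-intersection identity used in Step 2 is geometrically transparent but also demands care; fortunately, the disjoint-subspace setting admits the explicit descriptions developed in \S\ref{sec:disjoint-sit}, which makes this step substantially more tractable.
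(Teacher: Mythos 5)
Your overall architecture is defensible, and you have inverted the paper's order: the paper first proves the single equivalence $\shC \simeq (\shA^{(1)})^\hpd_{/\PP(V_1)} \ccup \cdots \ccup (\shA^{(n)})^\hpd_{/\PP(V_n)}$ (Thm.~\ref{thm:join-linear}) via the birational morphism $\beta \colon \check{P}(\shH_{X_1},\shH_{X_2}) \to \shH(X_1,X_2)$ and the Lefschetz-center criterion Lem.~\ref{lem:equiv-lef}, obtains (2) and (3) by combining this with the Main Theorem \ref{thm:main}, and only afterwards deduces (1) (Thm.~\ref{thm:cone}) by HPD reflexivity from the degenerate case $\shA^{(2)} = D(\PP(V_2))$ of the same $\beta_*$-argument. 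Your reduction of (3) to (2) plus the Main Theorem is correct and matches the paper, and your geometric identification in Step 1 --- that $\check{P}(\shH_{X_1/\PP(V_1)}, \PP(V_2^\vee))$ contracts birationally onto $\shH_{X_1/\PP(V)}$, with fibers degenerating over $\PP(V_2^\vee)$ --- is right in spirit. But ``comparing Lefschetz decompositions'' does not by itself identify the two admissible subcategories: one side is cut out by the HPD condition $\delta_{\shH*}C \in \shA_0 \boxtimes D(\PP(V^\vee))$, the other by restriction conditions on the join's sections, and the identification across the contraction requires showing pushforward respects both conditions and induces mutually inverse equivalences of Lefschetz centers, which hinges on the vanishing $\pi_*\cone(\pi^* a \to \gamma\gamma^*\pi^* a)=0$ of Lem.~\ref{lem:hpdcat}; this deferred computation is exactly the paper's Step 1/Step 2 in the proofs of Thm.~\ref{thm:join-linear} and \ref{thm:cone}.

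The genuine gap is in your Step 2. The identity $[A \ccup \PP(V_2^\vee)] \ccap [B \ccup \PP(V_1^\vee)] \simeq A \ccup B$ cannot be established by ``direct manipulation of the associated abstract joins, blow-ups, and fiber products over $\PP(V^\vee)$'', because the ambient fiber square fails Tor-independence: writing $Y_A, Y_B$ for the supports of $A, B$, the fiber of $P(Y_A,\PP(V_2^\vee)) \times_{\PP(V^\vee)} P(\PP(V_1^\vee),Y_B)$ over a point of $\PP(V_2^\vee) \subset \PP(V^\vee)$ contains a product of $Y_A$ with a full $\PP(V_1^\vee)$-factor times a fiber of $Y_B \to \PP(V_2^\vee)$, yielding components of dimension about $\dim Y_A + \dim Y_B + \dim\PP(V_1^\vee)$, exceeding the expected $\dim Y_A + \dim Y_B + 1$ as soon as $\dim\PP(V_1^\vee) \ge 2$. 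Hence Prop.~\ref{prop:product} and the base-change formalism of \S\ref{sec:bc}, on which $\ccap$ is defined, are unavailable on the join model; moreover, transporting the left-hand side of (2) through the equivalences of (1) silently assumes $\ccap$ is invariant under $\PP(V^\vee)$-linear equivalences between categories supported on different schemes, which the paper's admissible-subcategory framework does not supply. If you repair this by computing $\ccap$ on the universal-hyperplane model --- where $\shH(X_1,X_2) = \shH_{X_1/\PP(V)} \times_{\PP(V^\vee)} \shH_{X_2/\PP(V)}$ is of expected dimension, hence Tor-independent --- your join--intersection identity becomes verbatim Thm.~\ref{thm:join-linear}(1), i.e.\ the statement to be proved, and the unavoidable content reappears: the comparison $\check{\shJ} \simeq \shC$ along $\beta_*$. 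So as written the proposal reduces the theorem to itself at its hardest point; to complete it you must either carry out the $\beta_*$-type comparison (as the paper does), or work in a dg/$\infty$-enhanced setting with derived fiber products where model-independence of $\ccap$ is available.
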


The statement $(3)$ of above theorem is the main result ``formation of categorical joins commutes with HPD" of \cite{KP18}. Note that statement $(3)$ is the equivalent form of our Main theorem in the splitting case, and our approach in this paper provides a different proof of it.

\subsection{$n$-HPD category} One key ingredient of our approach is the concept of {\em $n$-HPD category}, which naturally relates the HPD of joins and the fiber products of HPDs (and also the join of HPDs). The definition of $n$-HPD category generalises that of a HPD category for $X \to \PP(V)$ to $n$ Lefschetz varieties $X_k \to \PP(V)$, $k=1,\ldots,n$, where ordinary HPD category corresponds to the case $n=1$. We illustrate by the case $n=2$.

The {\em double universal hyperplane} $\shH(X_1,X_2)$ for $X_1,X_2 \to \PP(V)$ is defined by: 
	$$\shH(X_1,X_2) = \{(x_1, x_2, [H])  \mid x_1 \in H, x_2 \in H\} \subset X_1 \times X_2 \times \PP(V^\vee),$$
which, as a family over $\PP(V^\vee)$, captures the simultaneous hyperplane sections of all $X_k$'s. The {\em double HPD category}  $\shC$ is a $\PP(V^\vee)$-linear subcategory of the derived category of $\shH(X_1,X_2)$, which captures the ``deepest strata of categorical changes" of the family $\shH(X_1,X_2)$ over $\PP(V^\vee)$ (see Lem. \ref{lem:sod:nH} for the precise meaning). The double HPD category $\shC$ can also be intrinsically defined in a similar manner as how the ordinary HPD is defined, i.e.
$$\shC  =  \{C \in D(\shH(X_1,X_2))  \mid \delta_{{\shH}*} \,C \in \shA^{(1)}_0 \boxtimes \shA^{(2)}_0 \boxtimes D(\PP(V^\vee))\} \subset D(\shH(X_1,X_2)),$$
where $\delta_{\shH} \colon \shH(X_1,X_2) \to X_1 \times X_2 \times \PP(V^\vee)$ is the inclusion, see Def. \ref{def:nHPD}.

\subsection{Strategy of proof} Our strategy, following the general strategy of {\em ``chess game"} of \cite{JLX17}, is to put all the categories of interests ($\sD_1, \sD_2, \sD_3, \shC$ below) inside the same ambient category $D(\shH(X_1,X_2))$ and naturally compare them. (Note that, however, in this paper we do {\em not} use the general results on ``chess game" of \cite{JLX17} except from following the philosophy). There are three  geometric pictures which connect the categories of interests to  $D(\shH(X_1,X_2))$ and hence with the double HPD category $\shC$:

The first picture is that there exists a natural {\em birational} morphism from the generalized universal hypersurface $\shH_{P(X_1,X_2)}$ of the abstract join $P(X_1,X_2)$ to the product space $X_1 \times X_2 \times \PP(V)$, which is an isomorphism exactly outside the double universal hyperplane $\shH(X_1,X_2) \subset  X_1 \times X_2 \times \PP(V)$, and it is a $\PP^1$-bundle along $\shH(X_1,X_2)$:
\begin{equation*}
	\begin{tikzcd}
	\PP(N_{\delta}^\vee) \ar{d}[swap]{\PP^1\text{-bundle}} \ar[hook]{r}{j} & \shH_{P(X_1,X_2)}  \ar{d}{\text{birational}}  \\
	\shH(X_1,X_2) \ar[hook]{r}{\delta}         &  X_1 \times X_2 \times \PP(V^\vee). 
	\end{tikzcd}	
\end{equation*}
This picture is the key to relate the HPD $\sD_1 :=(X_1 \ccup X_2)^\hpd \subset D(\shH_{P(X_1,X_2)})$ of the join $X_1 \ccup X_2$ with the double HPD category $\shC \subset D(\shH(X_1,X_2))$, and to show $\sD_1 \simeq \shC$.

The second picture is that $\shH(X_1,X_2) $ is the fiber product of $\shH_{X_1}$ and $\shH_{X_2}$ over $\PP(V^\vee)$. This geometry enables us to show $\shC \simeq X_1^\hpd \ccap X_2^\hpd =: \sD_2$. 

The third picture occurs in the splitting case $X_1 \to \PP(V_1)$, $X_2 \to \PP(V_2)$ and $V = V_1 \oplus V_2$. Then the join $\check{P}(\shH_{X_1}, \shH_{X_2})$ of universal hyperplanes $\shH_{X_1}$ and $\shH_{X_2}$ over $\PP(V^\vee)$ (where $\shH_{X_k}$ is the small universal hyperplane of $X_k$ over $\PP(V_k)$) is the {\em blowing up} of $\shH(X_1,X_2)$ along the subvariety $\shH_{X_1} \times X_2  \sqcup X_1 \times \shH_{X_2} $, 

\begin{equation*}
	\begin{tikzcd}
	E_1 \sqcup E_2 \ar{d}[swap]{\text{proj. bundle}} \ar[hook]{r}  	&  \ar{d}{\text{blow-up}} \check{P}(\shH_{X_1}, \shH_{X_2})
	\\
	\shH_{X_1} \times X_2  \sqcup X_1 \times \shH_{X_2}  \ar[hook]{r}         & \shH(X_1,X_2),
	\end{tikzcd}	
\end{equation*}
where the exceptional divisor satisfies $E_1 \simeq E_2 \simeq \shH_{X_1} \times \shH_{X_2}$. 
This picture enables us to directly compare $\shC$ with $\sD_3 := (X_1)_{/\PP(V_1)}^\hpd  \ccup (X_2)_{/\PP(V_2)}^\hpd \subset D(\check{P}(\shH_{X_1}, \shH_{X_2}))$.

The details of the proofs are given subsequently in section \ref{sec:main-results}.

\subsection{Homological projective geometry} In \cite{KP18} Kuznetsov and Perry proposed a robust theory called {\em homological projective geometry},  where the category $\underline{{\rm Lef}}_{/\PP(V)}$ of (smooth proper) $\PP(V)$-linear Lefschetz categories plays the role of the category of (smooth) projective subvarieties of $\PP(V)$ in classical projective geometry. More over, HPD, categorical joins and cones play the roles of projective duality,  joins and cones in projective geometry.

The known results supporting the proposal of homological projective geometry have been very fruitful and powerful. Kuznetsov's {\em fundamental theorem of HPD} \cite{Kuz07HPD}, as a homological counterpart of classical Lefschetz theory, systematically compares linear sections of HPDs.
The {\em categorical Pl\"ucker formula} \cite{JLX17}, as a two-step categorification of the topological Pl\"ucker formula, systematically compares the intersection of Lefschetz varieties and of their HPDs. Another formulation called {\em nonlinear HPD theorem} has been given in \cite{KP18}, where the authors introduce {\em categorical joins} for varieties inside different projective spaces.

This paper provides further strong evidences for this proposal. As a consequence of our main theorem, we show that the category $\underline{{\rm Lef}}_{/\PP(V)}$ of Lefschetz categories is closed under the following two {\em commutative} and {\em associative monoidal operations}, namely categorical join
	$$  \ccup   \colon \underline{{\rm Lef}}_{/\PP(V)} \times \underline{{\rm Lef}}_{/\PP(V)} \to \underline{{\rm Lef}}_{/\PP(V)}$$
 and fiber product
 	$$  \times_{\PP(V)}   \colon \underline{{\rm Lef}}_{/\PP(V)} \times \underline{{\rm Lef}}_{/\PP(V)} \to \underline{{\rm Lef}}_{/\PP(V)},$$
(if we assume the fiber products are smooth of expected dimensions), and that these two operations are {\em dual} to each other under HPD.  See Thm. \ref{thm:join-nHPD}.

For a linear subbundle $L \subset V^\vee$, we introduced in \cite{JL18Bl} the operation of  refined blow-up:
	 $$ \Phi  \colon \underline{{\rm Lef}}_{/\PP(V)}  \to \underline{{\rm Lef}}_{/\PP(L^\vee)}, \qquad \shA \mapsto \Phi(\shA) :=\Bl_{\shA_{\PP(L^\perp)}}^{\rm ref} \shA,$$
and show that it is dual to the operation of restrictions to linear subspaces. More precisely, $\Phi(\shA)^\hpd = (\shA^\hpd)|_{\PP(L)}$, for $\shA \in  \underline{{\rm Lef}}_{/\PP(V^)}$.
There is also an operation in the other direction, called {\em categorical cone},
	$\shC_{\PP(L^\vee)} (-)\colon \underline{{\rm Lef}}_{/\PP(L^\vee)}  \to \underline{{\rm Lef}}_{/\PP(V)},$
which is the categorical join $(-) \ccup \PP(L^\perp)$ if there is a splitting $V = L^\vee \oplus L^\perp$, see \cite{KP18}, also Rmk. \ref{rmk:cone}.
Then the results of this paper can be used to show that the composition:
		$$\Phi  \circ \shC_{\PP(L^\vee)} (-) \colon   \underline{{\rm Lef}}_{/\PP(L^\vee)}  \to \underline{{\rm Lef}}_{/\PP(V)} \to \underline{{\rm Lef}}_{/\PP(L^\vee)} $$
is equivalent to identity, i.e. $\Phi (\shC_{\PP(L^\vee)} (\shA)) \simeq \shA$, for $\shA \in \underline{{\rm Lef}}_{/\PP(L^\vee)}$.

\subsection{Conventions} Let $B$ be a fixed base scheme, smooth over a ground field of characteristic zero, and $V$, $V^\vee$ be dual vector bundles of rank $N$ over $B$. All schemes considered in this paper will be $B$-schemes, and products are fiber products over $B$. For a scheme $X$, the categories $\Perf(X)$, $D(X)$ and $D_{qc}(X)$ denote respectively the triangulated category of perfect complexes, the bounded derived category of coherent sheaves and the unbounded derived category of quasi-coherent sheaves on $X$. We will mainly focus on $D(X)$, however most of our results directly work for $\Perf(X)$.  A $B$-linear category will be an {\em admissible} subcategory $\shA \subset D(X)$ for some $B$-scheme $X$. Functors considered in this paper are all {\em derived} unless otherwise specified. The notation $\Phi \colon \shA  \rightleftarrows \shB \colon \Psi$ means that the functor $\Phi \colon \shA \to \shB$ is left adjoint to the functor $\Psi \colon \shB \to \shA$.

In this paper we will follow the philosophy of Bondal, Orlov, Kuznetsov, etc and use their well-established frameworks of {\em admissible} subcategories of (smooth) projective varieties. The readers should have no difficulty in translating the constructions and arguments to noncommutative algebraic geometry setting of using stable $\infty$-categories or dg-categories.

\subsection*{Acknowledgement} The authors would like to thank Richard Thomas, Mikhail Kapranov,  Andrei C\v{a}ld\v{a}raru, Matthew Young, Ying Xie and Dan Wang for many helpful discussions. J.Q.-Y. is supported by Grant from National Science Foundation (Grant No. DMS -- 1638352) and the Shiing-Shen Chern Membership Fund of IAS; L.N.C. is supported by grant from the Research Grants Council of the Hong Kong Special Administrative Region, China (Project No. CUHK -- 14301117).

\section{Preliminaries}

\subsection{Generalities} The readers are referred to \cite{Huy,BO,Kuz14SOD} for basic definitions of derived categories, and properties of semiorthogonal decompositions. A full triangulated subcategory $\shA$ of a triangulated category $\shT$ is called {\em admissible} if the inclusion functor $\gamma: \shA \hookrightarrow \shT$ has both a right adjoint functor $\gamma^!: \shT \to \shA$ and a left adjoint functor $\gamma^*: \shT \to \shA$. If $\shA\subset \shT$ is admissible, then its {\em right orthogonal} and respectively {\em left orthgonal}:
	$$\shA^\perp = \{ T \in \shT \mid \Hom(\shA,T) = 0\}
	\quad \text{and} \quad 
	{}^\perp \shA =\{ T \in \shT \mid \Hom(T, \shA) = 0\}$$
are both admissible. A {\em semiorthogonal decomposition} of a triangulated category $\shT$ (sometimes simply called a decomposition of $\shT$), denoted by
	$$\shT = \langle \shA_1, \ldots, \shA_n \rangle,$$
is a sequence of admissible full triangulated subcategories $\shA_1, \shA_2, \ldots, \shA_{n}$, such that $(1)$ $\Hom (a_j ,a_i) = 0$ for all $a_i \in \shA_i$ and $a_j \in \shA_j$ , if $j > i$, and $(2)$ the sequence generate the whole $D(X)$. Note that for an admissible subcategory $\shA \subset \shT$, we have semiorthogonal decompositions $\shT = \langle \shA^\perp, \shA \rangle =  \langle \shA, {}^\perp \shA\rangle$. 

\subsection{Derived categories over a base and base-change} \label{sec:bc} The references for this section are \cite{Kuz07HPD,Kuz11Bas}, see also summaries in \cite{JL18Bl}. Let $S$ be a fixed scheme, and $a \colon X \to S$ be a $S$-scheme. Then $D(X)$ is naturally equipped with {\em $S$-linear structure} given by $A \otimes a^* F$, for any $F \in \Perf(S)$ and $A \in D(X)$. An admissible subcategory $\shA \subset D(X)$ is called {\em $S$-linear} if $A \otimes a^* F \in \shA$ for all $A \in \shA$ and $F \in \Perf(S)$. Such an admissible subcategory $\shA$ will be simply referred to as an {\em $S$-linear category}. An {\em $S$-linear} functor between $S$-linear categories is an exact functor functorially preserving $S$-linear structures. Note that an $S$-linear category $\shA$ is by definition equipped with an action functor:
	$$\act \colon \shA \boxtimes D(S) \to \shA,$$
given by $\act(A \boxtimes F) = A \otimes a^* F \in \shA$. (The notation $\shA \boxtimes D(S)$ is to be defined later).

A base change $\phi: T \to S$ is called \emph{faithful} for $X \to S$ if the cartesian square
	\begin{equation}\label{eqn:fiber}
	\begin{tikzcd}
	X_T = X \times_S T  \arrow{d}{}[swap]{a_T}  \arrow{r}{\phi_T} & X \arrow{d}{a} \\
	T\arrow{r}{\phi}  & S
	\end{tikzcd}
	\end{equation}
	is {\em Tor-independent}, which is equivalent to the condition that the natural transformation $ a^* \circ \phi_* \to \phi_{T *} \circ a_T^*: D(T) \to D(X)$ is an isomorphism.
	
	 Let $\shA \subset D(X)$ be $S$-linear and $\phi: T \to S$ be a projective faithful base-change. Then the {\em base-change of $\shA$ along $\phi$}, which is the $T$-linear admissible subcategory
	$$\shA_T  := \{ C \in D(X_T) \mid \phi_{T*} (C \otimes a_T^* F) \in \shA, \quad \forall F \in \Perf(T)\} \subset D(X_T),$$
see {\cite[Cor. 5.7]{Kuz11Bas}}. It satisfies $\phi_T^*(a) \in \shA_{T}$ for any $a \in \shA$, and $\phi_{T*}(b) \in \shA$ for $b \in \shA_{T}$ with proper support over $X$. The construction of base-change category $\shA_T$ is compatible for composition of base-changes $T' \to T \to S$, then $(\shA_T)_{T'} = \shA_{T'}$, see \cite[Lem. 2.7]{JL18Bl}.

Let $\shA \subset D(X)$ and $\shB \subset D(Y)$ be $S$-linear subcategories, and the fiber product $X\times_S Y$ of $S$-schemes $X,Y$ is Tor-independent. Then following Kuznetsov \cite{Kuz11Bas}, we can define the {\em exterior product of $\shA$ and $\shB$ over $S$} using base-change of categories:  
	$$\shA \boxtimes_S \shB : = \shA_Y \cap \shB_X \subset D(X \times_S Y),$$
where $\shA_Y$ is the base-change category of $\shA \subset D(X)$ along $Y \to S$, and $\shB_X$ is the base-change category of $\shB \subset D(Y)$ along $X \to S$. If $S=B$ is our fixed base scheme, we will omit the subscript $B$ and simply write $\shA \boxtimes \shB$.
Notice if a base-change $\phi \colon T \to S$ is faithful for $X$, then by definition the two constructions agrees:
	$$\shA \boxtimes_{S} D(T) = \shA_T \subset D(X_T).$$

\begin{lemma}[Associativity, see {\cite[Lem. 2.13]{JL18Bl}}] \label{lem:ass:tensor} Assume $X,Y,Z$ are $S$-schemes such that the fiber squares for fiber products $X \times_S Y$, $Y \times_S Z$, $X \times_S Z$ are all Tor-independent. Let $\shA \subset D(X)$, $\shB \subset D(Y)$, $\shC \subset D(Z)$ be $S$-linear admissible subcategories. Then there is a canonical identification of subcategories
	\begin{align*}
	 (\shA \boxtimes_S \shB) \boxtimes_S \shC = \shA \boxtimes_S (\shB \boxtimes_S \shC) \subset D(X \times_S Y \times_S Z).
	\end{align*}
\end{lemma}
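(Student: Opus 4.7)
The approach is to show that both iterated exterior products $(\shA \boxtimes_S \shB) \boxtimes_S \shC$ and $\shA \boxtimes_S (\shB \boxtimes_S \shC)$ coincide, as subcategories of $D(X \times_S Y \times_S Z)$, with the symmetric triple intersection
\[
\shA_{Y \times_S Z} \,\cap\, \shB_{X \times_S Z} \,\cap\, \shC_{X \times_S Y}.
\]
Once this triple-intersection description is established, associativity is immediate from the symmetry of the formula in $\shA$, $\shB$, $\shC$.

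To reduce the left-hand side to the triple intersection, the plan uses two auxiliary facts. First, base-change of admissible $S$-linear subcategories commutes with intersection: given $\shM_1, \shM_2 \subset D(W)$ that are $S$-linear, and a faithful base-change $T \to S$, one has $(\shM_1 \cap \shM_2)_T = (\shM_1)_T \cap (\shM_2)_T$. This is a direct consequence of the characterization $C \in \shM_{T}$ iff $\phi_{T*}(C \otimes a_T^* F) \in \shM$ for all $F \in \Perf(T)$, since membership in an intersection decouples. Second, transitivity of base-change, already recorded in the preliminaries, gives $(\shA_Y)_{Y \times_S Z} = \shA_{Y \times_S Z}$ for the composite base-change $Y \times_S Z \to Y \to S$, and likewise $(\shB_X)_{X \times_S Z} = \shB_{X \times_S Z}$.

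Concretely I would unfold the left-hand side as
\[
(\shA \boxtimes_S \shB) \boxtimes_S \shC \,=\, \bigl(\shA_Y \cap \shB_X\bigr)_{Z} \,\cap\, \shC_{X \times_S Y},
\]
then apply the intersection-with-base-change identity to get $(\shA_Y)_Z \cap (\shB_X)_Z \cap \shC_{X\times_S Y}$, and finally use transitivity of base-change to rewrite these as $\shA_{Y\times_S Z}$ and $\shB_{X\times_S Z}$. The same manipulation applied to the right-hand side (unfolding $\shB \boxtimes_S \shC = \shB_Z \cap \shC_Y$ first and then base-changing to $X$) produces the identical triple intersection, giving the claimed equality.

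The principal technical obstacle is verifying that the hypothesized pairwise Tor-independence of the squares for $X\times_S Y$, $Y\times_S Z$, $X\times_S Z$ is enough to make every base-change step faithful, i.e.\ to legitimately identify $(X\times_S Y)\times_S Z$ with $X\times_S Y\times_S Z$ and to invoke the transitivity statement $(\shA_Y)_{Y\times_S Z} = \shA_{Y\times_S Z}$. This amounts to checking that pairwise Tor-independence propagates to the triple fiber product in the situations required; this is a standard but careful base-change verification, after which the categorical argument above runs through cleanly.
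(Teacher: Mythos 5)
Your categorical skeleton is the right one, and since this paper does not prove the lemma itself (it is imported from \cite[Lem.~2.13]{JL18Bl}), the natural benchmark is exactly the argument you outline: both iterated products reduce to the symmetric triple intersection $\shA_{Y\times_S Z}\cap\shB_{X\times_S Z}\cap\shC_{X\times_S Y}$, using that the pushforward criterion defining $\shM_T$ decouples over intersections (correct, and immediate from the definition recalled in \S\ref{sec:bc}) and the transitivity $(\shA_Y)_{Y\times_S Z}=\shA_{Y\times_S Z}$ of \cite[Lem.~2.7]{JL18Bl}.

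The genuine gap is in your final paragraph: pairwise Tor-independence does \emph{not} propagate to the triple fiber product, so the verification you defer there would fail. Take $S=\AA^2=\Spec k[x,y]$, $X=\{x=0\}$, $Y=\{y=0\}$, $Z=\{x=y\}$, three concurrent lines: each pairwise intersection is transverse, hence all three pairwise squares are Tor-independent, but $X\times_S Y=\Spec k$ is the origin and
\begin{equation*}
\Tor_1^{k[x,y]}\bigl(k,\; k[x,y]/(x-y)\bigr)=k\neq 0,
\end{equation*}
so $Z\to S$ is \emph{not} a faithful base change for $X\times_S Y\to S$. This is precisely the faithfulness your argument needs twice: once to apply Kuznetsov's base-change machinery to $(\shA\boxtimes_S\shB)_Z$, and once for the transitivity step $(\shA_Y)_{Y\times_S Z}=\shA_{Y\times_S Z}$, whose hypothesis amounts to the same vanishing, since under the pairwise assumptions $\sO_{X\times_S Y}\otimes^L_{\sO_Y}\sO_{Y\times_S Z}\simeq \sO_X\otimes^L_{\sO_S}\sO_Y\otimes^L_{\sO_S}\sO_Z$. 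The cure is to add the Tor-independence of the triple square as a standing hypothesis rather than attempt to derive it — note that the sibling compatibility statement, Lemma~\ref{lem:tensor:bc}, explicitly assumes Tor-independence for the big fiber product $Z$ in addition to the pairwise squares, and the same is implicitly needed here. Given the pairwise conditions, the triple condition is symmetric in $X,Y,Z$ (it says the derived triple tensor product is concentrated in degree zero), so it only needs to be assumed once; with it in place, every base change in your chain is faithful and your proof is complete.
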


\begin{lemma}[Compatibility, see {\cite[Lem. 2.14]{JL18Bl}}] \label{lem:tensor:bc} Let $S_k, T_k$ be schemes, $k=1,2$, $X_k$ be $S_1 \times T_k$ schemes, $\shA_k \subset D(X_k)$ be $S_1 \times T_k$-linear admissible subcategory, $Y_k$ be $S_2 \times T_k$-schemes, $\shB_k \subset D(Y_k)$ be $S_2 \times T_k$-linear admissible subcategory. Assume that the fiber squares for the fiber products $X_1 \times_{S_1} X_2$, $Y_1 \times_{S_2} Y_2$, $X_1 \times_{T_1} Y_1$, $X_2 \times_{T_2} Y_2$, and for
	$$ (X_1 \times_{S_1} X_2) \times_{T_1 \times T_2} Y_1 (\times_{S_2} Y_2) = (X_1 \times_{T_1} Y_1) \times_{S_1 \times S_2} (X_2 \times_{T_2} Y_2) = : Z$$
are all Tor-independent. Then there is a canonical identification of  subcategories:
	$$(\shA_1 \boxtimes_{S_1} \shA_2) \boxtimes_{T_1 \times T_2} (\shB_1 \boxtimes_{S_2} \shB_2) = (\shA_1 \boxtimes_{T_1} \shB_1) \boxtimes_{S_1 \times S_2} (\shA_2 \boxtimes_{T_2} \shB_2) \subset D(Z).$$
\end{lemma}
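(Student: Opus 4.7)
The strategy is to show that both sides of the proposed equality coincide with the fourfold intersection
\[
\mathcal{I} := (\shA_1)_Z \cap (\shA_2)_Z \cap (\shB_1)_Z \cap (\shB_2)_Z \; \subset \; D(Z),
\]
where each factor denotes the base change of the indicated admissible subcategory along the canonical projection from $Z$ (all well-defined by the assumed Tor-independence). Since the statement is symmetric under interchanging the roles of the indices $(S_k, \shA_k)$ and $(T_k, \shB_k)$, it suffices to verify that the left hand side equals $\mathcal{I}$; the right hand side then follows by the identical argument with the two tensor orderings swapped.

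To reduce the left hand side to $\mathcal{I}$, I first unwind the outer product using the definition of $\boxtimes_{T_1 \times T_2}$:
\[
\mathrm{LHS} = (\shA_1 \boxtimes_{S_1} \shA_2)_Z \cap (\shB_1 \boxtimes_{S_2} \shB_2)_Z,
\]
where the subscript $Z$ denotes base change along $Z \to X_{12}$ and $Z \to Y_{12}$ respectively, with $X_{12} := X_1 \times_{S_1} X_2$ and $Y_{12} := Y_1 \times_{S_2} Y_2$. This reduces matters to the ``single-product compatibility'' statements
\[
(\shA_1 \boxtimes_{S_1} \shA_2)_Z = (\shA_1)_Z \cap (\shA_2)_Z \quad\text{and}\quad (\shB_1 \boxtimes_{S_2} \shB_2)_Z = (\shB_1)_Z \cap (\shB_2)_Z.
\]
For the first, expand $\shA_1 \boxtimes_{S_1} \shA_2 = (\shA_1)_{X_{12}} \cap (\shA_2)_{X_{12}}$ by definition of the exterior product over $S_1$, then invoke the iteration property of base change \cite[Lem. 2.7]{JL18Bl} along the factorization $Z \to X_{12} \to X_k$ to identify each $((\shA_k)_{X_{12}})_Z$ with $(\shA_k)_Z$.

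The principal technical obstacle is to show that base change commutes with intersections of admissible subcategories, i.e.\ for $T$-linear admissibles $\shC_1, \shC_2 \subset D(W)$ and a faithful base change $T' \to T$, one has $(\shC_1 \cap \shC_2)_{T'} = (\shC_1)_{T'} \cap (\shC_2)_{T'}$ inside $D(W \times_T T')$. The inclusion $\subset$ is immediate from functoriality of base change; the reverse requires exploiting the Tor-independence hypotheses (on each of the four sub-squares and on the total fiber square defining $Z$) to move the defining pushforward-plus-tensor conditions between projections via flat base change and the projection formula, without incurring derived corrections. Once this compatibility is in place, applying the same two-step reduction to the right hand side---unwinding $\boxtimes_{S_1 \times S_2}$ and iterating along $Z \to X_k \times_{T_k} Y_k \to X_k, Y_k$---identifies RHS also with $\mathcal{I}$, completing the proof.
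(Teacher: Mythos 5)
First, a caveat on the comparison: this paper does not prove the lemma at all --- it is imported verbatim from \cite[Lem. 2.14]{JL18Bl} --- so your attempt has to be judged against the argument that reference (and the surrounding base-change machinery of \cite{Kuz11Bas}) would supply. Your skeleton, identifying both sides with the fourfold intersection $\mathcal{I}$ of base changes, is a sensible one, but you have misplaced the difficulty. The step you flag as the ``principal technical obstacle'' --- that base change commutes with intersections --- is tautological with this paper's definition of base change: since $\shA_T = \{ C \mid \phi_{T*}(C \otimes a_T^* F) \in \shA, \ \forall F \in \Perf(T)\}$ is a conjunction of pushforward conditions, $(\shC_1 \cap \shC_2)_{T'} = (\shC_1)_{T'} \cap (\shC_2)_{T'}$ holds by definition, in both directions, with no Tor-independence or projection-formula input whatsoever.

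The genuine gap is in the step you treat as routine, namely invoking ``the iteration property \cite[Lem. 2.7]{JL18Bl}'' to identify $((\shA_1)_{X_{12}})_Z$ with $(\shA_1)_Z$. That iteration lemma concerns a composition of base changes $T' \to T \to S$ over a \emph{single} linear structure, whereas here the first base change of $\shA_1$ is along $X_2 \times T_1 \to S_1 \times T_1$ and the second is along $Y_1 \times_{S_2} Y_2 \to T_1 \times T_2$, i.e. with respect to a different linear structure on $X_{12} = X_1 \times_{S_1} X_2$. Unwinding the definitions, membership of $C \in D(Z)$ in the left-hand side asks that pushforwards of $C$ to $X_1$, twisted by pullbacks of $\Perf(Y_1 \times_{S_2} Y_2)$ and of $\Perf(X_2)$ \emph{separately}, land in $\shA_1$ (plus the three analogous conditions), while membership in your $(\shA_1)_Z$ asks this for pullbacks of arbitrary objects of $\Perf(Y_1 \times_{S_2} X_2 \times_{T_2} Y_2)$; since perfect complexes on a fiber product over a nontrivial base are \emph{not} generated by exterior tensor products of pullbacks from the factors, the two conditions are not formally equivalent, and making them match is precisely the content of the lemma --- so your reduction is circular exactly where you wave at ``flat base change and the projection formula.'' The workable route is to embed $\shA_k$, $\shB_k$ in semiorthogonal decompositions, base change the decompositions to $D(Z)$ along either order of iteration via Prop.~\ref{prop:bcsod}, and characterize each side by the vanishing of the base-changed projection functors, which are Fourier--Mukai kernels; the listed Tor-independence hypotheses are what make the kernels base change consistently along the two routes, identifying the two ``all-distinguished'' components (and, incidentally, your $\mathcal{I}$ --- whose very well-definedness, e.g. faithfulness of $Y_1 \times_{S_2} X_2 \times_{T_2} Y_2 \to S_1 \times T_1$ for $X_1$, itself needs to be deduced from the assumed squares, a point you do not address).
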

 
The $S$-linear categories behave well under base-change and exterior products.

\begin{proposition} [{\cite[Thm. 5.6]{Kuz11Bas}}]\label{prop:bcsod} If $f:X \to S$ is a morphism, $D(X) = \langle \shA_1, \ldots, \shA_n \rangle$ is a $S$-linear semiorthogonal decompositions by admissible subcategories, such that the projection $D(X) \to \shA_k$ is of finite cohomological amplitude, for $k=1,\ldots,n$. Let $\phi: T \to S$ be a faithful base-change for $f$, then there is a $T$-linear semiorthogonal decomposition
	$$D(X_T) = \langle \shA_{1T}, \ldots, \shA_{nT} \rangle$$
where $\shA_{kT}$ is the base-change category of $\shA_k$ along $T \to S$.
\end{proposition}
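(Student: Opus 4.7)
The plan is to follow the standard strategy for base-change of semiorthogonal decompositions: construct $T$-linear projection functors $\pi_{kT} \colon D(X_T) \to \shA_{kT}$ by base-changing the original projections $\pi_k \colon D(X) \to \shA_k$, and then verify the axioms of a semiorthogonal decomposition for the sequence $(\shA_{1T}, \ldots, \shA_{nT})$ in $D(X_T)$, namely pairwise semiorthogonality together with generation.

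The key technical input is Tor-independence of the square \eqref{eqn:fiber}, which yields the flat base-change isomorphism $\phi^* a_* \cong a_{T*} \phi_T^*$ and dually $a^* \phi_* \cong \phi_{T*} a_T^*$, and is exactly the hypothesis built into the notion of a faithful base change. Each projection $\pi_k$ is an $S$-linear exact functor, so it commutes with tensoring by $a^* F$ for $F \in \Perf(S)$. The finite cohomological amplitude hypothesis is what allows one to extend $\pi_k$ to a functor on $D_{qc}(X)$ whose base change preserves bounded coherent objects; this gives functors $\pi_{kT}$ whose images land in $\shA_{kT}$, since for any $F \in \Perf(T)$ the projection formula together with $S$-linearity of $\pi_k$ yields $\phi_{T*}(\pi_{kT}(C) \otimes a_T^* F) \cong \pi_k(\phi_{T*}(C \otimes a_T^* F)) \in \shA_k$. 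Semiorthogonality is then a formal consequence: for $b \in \shA_{jT}$ and $a \in \shA_{iT}$ with $j > i$, testing $R\Hom_{X_T}(b,a)$ against every $F \in \Perf(T)$ reduces, via adjunction $\phi_T^* \dashv \phi_{T*}$ and the projection formula, to $R\Hom_X$ between an object of $\shA_j$ and an object of $\shA_i$, which vanishes by the original semiorthogonality. Finally, generation follows because every $C \in D(X_T)$ fits into a Postnikov tower assembled from the $\pi_{kT}(C)$'s, obtained by base-changing the analogous tower for $\phi_{T*} C$ in $D(X)$.

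The main obstacle is the careful verification that the base-changed projections are genuinely defined on the bounded coherent category $D(X_T)$ and land in $\shA_{kT}$, rather than only in $D_{qc}(X_T)$. This is precisely where the finite cohomological amplitude assumption is indispensable: without it the base-change compatibility only holds after passing to unbounded quasi-coherent derived categories, and there is no a priori reason for the $\pi_{kT}$ to preserve boundedness. Once this amplitude bound is in hand, all the above compatibilities descend to $D(X_T)$, and the rest of the argument is a routine application of adjunction, Tor-independence, and the projection formula; this is the content of \cite[Thm.~5.6]{Kuz11Bas}, which we invoke directly.
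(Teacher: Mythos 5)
The paper offers no proof of this proposition---it is quoted directly from Kuznetsov's base-change paper---and your proposal, after sketching the internals of that theorem, likewise ends by invoking \cite[Thm.~5.6]{Kuz11Bas}, so the two approaches coincide. Your outline of Kuznetsov's argument is essentially accurate (the finite cohomological amplitude hypothesis is indeed what allows the projection functors to extend to the unbounded quasi-coherent categories and descend after base change), with the one caveat that generation is not obtained by ``base-changing the Postnikov tower for $\phi_{T*}C$''---that would only decompose $\phi_T^*\phi_{T*}C$, not $C$---but rather by applying the base-changed projection functors to $C$ itself; since you defer to the cited theorem in any case, this imprecision is immaterial.
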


\begin{proposition}[{\cite[\S 5.5]{Kuz11Bas}}] \label{prop:product} Let $X,Y$ be $S$-schemes, and $\shA \subset D(X)$, $\shB \subset D(Y)$ be $S$-linear admissible subcategories, with $S$-linear semiorthogonal decompositions $\shA = \langle \shA_1, \ldots, \shA_m\rangle$ and $\shB = \langle \shB_1, \ldots, \shB_n \rangle$, $m,n \ge 1$. Assume the following technical condition holds: the projection functors $D(X) \to \shA$, $D(X) \to {}^\perp \shA$, $D(Y) \to \shB$, $D(Y) \to {}^\perp\shB$, $\shA \to \shA_i$, $\shB \to \shB_j$ are all of finite cohomological amplitudes. Assume the square for fiber product $X \times_S Y$ is Tor-independent. Then there is an $S$-linear semiorthogonal decomposition
	$$\shA \boxtimes_S \shB = \big\langle \shA_i \boxtimes_S \shB_j \big\rangle_{1\le i \le m, 1 \le j \le n},$$
where the order of the semiorthogonal sequence is any order $\{(i,j)\}$ extending the natural partial order of $\{i \mid 1 \le i \le m\}$ and $\{j \mid 1 \le j \le n\}$.
\end{proposition}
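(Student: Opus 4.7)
The plan is to combine two different semiorthogonal decompositions of the ambient category $D(X \times_S Y)$ --- one obtained by base-changing the SOD of $\shA$ along $Y \to S$, the other by base-changing the SOD of $\shB$ along $X \to S$ --- and show that they restrict to a joint decomposition on the intersection $\shA \boxtimes_S \shB = \shA_Y \cap \shB_X$. The main technical ingredient will be a compatibility between these two SODs: the projection functors of the first preserve the subcategories of the second, and vice versa.

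To begin, I would extend the given decompositions to ambient SODs $D(X) = \langle {}^\perp\shA,\, \shA_1,\ldots,\shA_m,\, \shA^\perp\rangle$ and $D(Y) = \langle {}^\perp\shB,\, \shB_1,\ldots,\shB_n,\, \shB^\perp\rangle$. The Tor-independence hypothesis ensures that each of $X \to S$ and $Y \to S$ is a faithful base change for the other, so Proposition \ref{prop:bcsod} applies --- the cohomological amplitude assumptions are exactly what is needed to verify its hypotheses. Base-changing yields a $Y$-linear SOD of $D(X \times_S Y)$ whose middle block reads $\shA_Y = \langle (\shA_1)_Y, \ldots, (\shA_m)_Y\rangle$, and an $X$-linear SOD whose middle block reads $\shB_X = \langle (\shB_1)_X, \ldots, (\shB_n)_X\rangle$. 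Since by definition $\shA \boxtimes_S \shB = \shA_Y \cap \shB_X$, the problem reduces to decomposing this intersection.

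Next I would establish the key compatibility: the projection functor $P_i^Y \colon D(X \times_S Y) \to (\shA_i)_Y$ preserves the subcategory $\shB_X$. Being the base change of the $S$-linear projection $P_i \colon D(X) \to \shA_i$ along $Y \to S$, the functor $P_i^Y$ is $Y$-linear and can be realised as convolution along the $X$-factor with the integral kernel of $P_i$ on $X \times_S X$, lifted to $X \times_S X \times_S Y$ via pullback along the first two factors. Using the projection formula together with flat base change on the triple fiber product --- all valid thanks to Tor-independence --- one verifies that for every $F \in \Perf(X)$ there exists $\widetilde{F} \in \Perf(X)$ with
\begin{align*}
\pi_{Y*}\bigl(P_i^Y(C) \otimes \pi_X^* F\bigr) \;\simeq\; \pi_{Y*}\bigl(C \otimes \pi_X^* \widetilde{F}\bigr).
\end{align*}
When $C \in \shB_X$, the right-hand side lies in $\shB$ by definition of base-change, so $P_i^Y(C) \in \shB_X$. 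A symmetric argument shows the projections of the $X$-linear SOD preserve $\shA_Y$.

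With the compatibility in hand, the conclusion follows by a standard gluing argument. Semiorthogonality of the collection $\{\shA_i \boxtimes_S \shB_j\}$ in any ordering refining the product partial order is inherited from that of $\{(\shA_i)_Y\}$ and $\{(\shB_j)_X\}$ in the ambient category. For generation, given $C \in \shA_Y \cap \shB_X$, decompose $C$ via the SOD of $\shA_Y$; each component remains in $\shB_X$ by the compatibility step, hence lies in $(\shA_i)_Y \cap \shB_X$. A second pass through the SOD of $\shB_X$, again using the dual compatibility, refines each such piece into $\langle (\shA_i)_Y \cap (\shB_j)_X\rangle_j = \langle \shA_i \boxtimes_S \shB_j\rangle_j$, yielding the claimed decomposition. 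The main obstacle is the compatibility step: the kernel bookkeeping on the triple fiber product $X \times_S X \times_S Y$ is notationally heavy, and the finite cohomological amplitude assumptions have to be tracked carefully to justify each commutation of pushforward and pullback on (possibly unbounded) derived categories.
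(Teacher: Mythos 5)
The paper itself contains no proof of this proposition --- it is imported wholesale from \cite[\S 5.5]{Kuz11Bas} --- so your proposal is measured against Kuznetsov's argument, which it essentially reconstructs: base-change both decompositions to $D(X\times_S Y)$ via Prop.~\ref{prop:bcsod}, realize the projection functors as kernel functors, verify that the two families of projections are compatible, and glue. The skeleton is correct, but three points need repair or honest attribution. First, $D(X)=\langle {}^\perp\shA,\,\shA_1,\ldots,\shA_m,\,\shA^\perp\rangle$ is not a semiorthogonal decomposition: it would require $\Hom(\shA,{}^\perp\shA)=0$ and $\Hom(\shA^\perp,{}^\perp\shA)=0$, both false in general, and the three blocks overgenerate. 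You only need the one-sided extension $D(X)=\langle \shA_1,\ldots,\shA_m,\,{}^\perp\shA\rangle$ (dually for $\shB$), which is precisely why the hypotheses list the projections $D(X)\to\shA$ and $D(X)\to{}^\perp\shA$. Second, the statement that $P_i$ ``can be realised as convolution with an integral kernel on $X\times_S X$'' is itself one of the principal theorems of \cite{Kuz11Bas} --- projection functors of $S$-linear decompositions whose projections have finite cohomological amplitude are kernel functors --- and this is where the finite-amplitude hypotheses do their real work; it cannot be treated as routine bookkeeping.

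Third, there is a genuine gap in your compatibility computation as written: $\widetilde F = p_{2*}(K_i\otimes p_1^*F)$ is the image of $F$ under a (transposed) kernel functor, and there is no reason for it to lie in $\Perf(X)$, whereas membership in $\shB_X$ is by definition tested only against perfect complexes. This is harmless when $X$ is smooth and proper over the base --- then $D^b(X)=\Perf(X)$ and properness of $p_2$ keeps $\widetilde F$ bounded coherent --- which covers every use of the proposition in this paper, but in the stated generality one must either enlarge the test class using the finite-amplitude formalism of \cite{Kuz11Bas} or, as Kuznetsov does, observe that the kernels of $P_i^Y$ and $Q_j^X$ convolve along disjoint factors, so the two projections literally commute; that single commutation yields both of your preservation statements and the gluing simultaneously, without ever producing the problematic $\widetilde F$. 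Your remaining checks are fine: in any linear order refining the product partial order, a later pair $(i,j)$ satisfies $i>i'$ or $j>j'$ against an earlier $(i',j')$, so semiorthogonality follows from the two ambient decompositions, and the two-pass generation argument is valid once the compatibility step is secured.
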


\subsection{Geometry of linear categories} \label{sec:geometric_construction}
In this section we review the constructions and results for basic geometric operations (projective bundle, generalized universal hyperplane and blowing up) on linear categories. Reference is \cite[\S 3]{JL18Bl}. The readers who are only concerned with schemes or have faith that the theorems for derive categories of schemes should also hold for reasonable subcategories, may skip this section.

\subsubsection{Projective bundle} \label{sec:app:proj_bd} Let $S$ be a smooth $B$-scheme, and $E$ be a vector bundle of rank $r$ on $S$, and denote $\pi \colon \PP_S(E) \to S$ the projection. Let $X$ be a proper $S$-scheme, $i_{\shA} \colon \shA \hookrightarrow D(X)$ be an inclusion of $S$-linear admissible subcategory. Then the {\em projective bundle} $\PP_\shA (E)$ of vector bundle $E$ over $\shA$ is the $S$-linear category defined by base-change:
	$$\PP_\shA (E) := \shA_{\PP_S(E)} = \shA \boxtimes_{S} D(\PP_S(E)) \subset  D(\PP_{X}(E)).$$
A $S$-linear semiorthogonal decomposition $D(X) = \langle \shA, \shB\rangle$ induces canonically a $S$-linear decomposition:
	$D(\PP_X(E)) = \langle \PP_\shA(E), \PP_\shB(E) \rangle.$
Notice the adjoint functors $\pi^* \colon D(X) \rightleftarrows D(\PP_{X}(E)) \colon \pi_*$ induce adjoint functors 
	$\pi^* \colon \shA \rightleftarrows \PP_{\shA}(E) \colon \pi_*,$ still denoted by same notations by abuse of notations.  

\begin{theorem}[Orlov's projective bundle formula \cite{O92}; see also {\cite[Thm. 3.1]{JL18Bl}}] \label{thm:app:pr_bd} The functors $\pi^*(-) \otimes \sO(k) \colon \shA \to \PP_{\shA}$ is fully faithful, $k \in \ZZ$, and the images induce a $S$-linear semiorthogonal decomposition
	$$\PP_\shA = \langle \pi^*\shA, \pi^*\shA \otimes \sO(1), \ldots, \pi^*\shA \otimes \sO(r-1)\rangle,$$
where $\sO(k)$ denotes the pull-back of line bundle $\sO_{\PP_S(E)}(k)$.
\end{theorem}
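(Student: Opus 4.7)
The statement is Orlov's projective bundle formula extended from $D(X)$ to an $S$-linear admissible subcategory $\shA \subset D(X)$, and my plan is to deduce it from Orlov's original theorem for $D(\PP_S(E))$ by combining the base-change compatibility of semiorthogonal decompositions (Proposition 2.2) with the product formula (Proposition 2.3). Write $\pi_S \colon \PP_S(E) \to S$ and $\pi_X \colon \PP_X(E) \to X$ for the two projections of the projective bundle. Since $\pi_S$ is flat and proper, it is a faithful base-change for $X \to S$, the fiber product $\PP_X(E) = X \times_S \PP_S(E)$ is automatically Tor-independent, and all the relevant projection functors have finite cohomological amplitude, so the hypotheses of Propositions 2.2 and 2.3 are in force.

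First, I would apply Orlov's original theorem to $\pi_S$ to get the $S$-linear semiorthogonal decomposition
$$D(\PP_S(E)) = \langle \pi_S^* D(S),\ \pi_S^* D(S)\otimes \sO(1),\ \ldots,\ \pi_S^*D(S)\otimes \sO(r-1)\rangle,$$
each component being admissible and equivalent to $D(S)$ under $F \mapsto \pi_S^* F \otimes \sO(k)$. Next, Proposition 2.3 applied to $\shA \subset D(X)$ (with its trivial one-piece decomposition) and the above decomposition of $D(\PP_S(E))$ yields
$$\PP_\shA(E) \;=\; \shA \boxtimes_S D(\PP_S(E)) \;=\; \bigl\langle\, \shA \boxtimes_S \bigl(\pi_S^*D(S)\otimes \sO(k)\bigr)\,\bigr\rangle_{k=0}^{r-1}.$$
The only substantive task remaining is to identify the $k$-th piece with $\pi_X^*\shA \otimes \sO(k) \subset D(\PP_X(E))$. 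Because $\sO(k)$ on $\PP_X(E)$ is pulled back from $\PP_S(E)$, the autoequivalence $(-)\otimes \sO(k)$ is $S$-linear and commutes with exterior products over $S$, so it suffices to treat $k=0$. By Proposition 2.2 the base-change of $\pi_S^*D(S) \subset D(\PP_S(E))$ along $X \to S$ is $\pi_X^* D(X) \subset D(\PP_X(E))$ (which is just Orlov's theorem applied to $\pi_X$ itself). Hence, by the base-change description of the exterior product,
$$\shA \boxtimes_S \pi_S^*D(S) \;=\; \shA_{\PP_S(E)} \cap (\pi_S^* D(S))_{X} \;=\; \PP_\shA(E) \cap \pi_X^* D(X) \;=\; \pi_X^*\shA,$$
the last equality because $\pi_X^* \colon D(X) \to D(\PP_X(E))$ is fully faithful and restricts to an equivalence $\shA \simeq \pi_X^*\shA$.

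Fully faithfulness of $\pi_X^*(-)\otimes \sO(k) \colon \shA \to \PP_\shA(E)$ for every $k \in \ZZ$ is then inherited from that of $\pi_X^*$ on the ambient category together with the fact that tensoring by a line bundle is an autoequivalence, and the semiorthogonality of $\{\pi_X^*\shA \otimes \sO(k)\}_{k=0}^{r-1}$ is the restriction of the semiorthogonality already present in Orlov's decomposition of $D(\PP_X(E))$ (it reduces, after twisting, to the standard vanishing $\pi_{X*}\sO_{\PP_X(E)}(\ell-k)=0$ for $-r < \ell-k < 0$). The one step that requires care is the identification $\shA \boxtimes_S \pi_S^*D(S) = \pi_X^*\shA$, which I expect to be the main bookkeeping obstacle; however, it is a routine consequence of the compatibility of base-change with exterior products recalled in Section 2.2 (in particular Lemmas 2.1 and 2.2), after which the theorem falls out directly from Propositions 2.2 and 2.3.
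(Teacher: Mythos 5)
The paper gives no proof of this statement at all (it is quoted from \cite{O92} and \cite[Thm.~3.1]{JL18Bl}), and your argument is precisely the standard derivation underlying that citation and the one the paper's preliminaries are set up for: base-change Orlov's $S$-linear decomposition of $D(\PP_S(E))$ via Prop.~\ref{prop:bcsod}, decompose $\PP_\shA(E)=\shA\boxtimes_S D(\PP_S(E))$ via Prop.~\ref{prop:product} (the fiber square is Tor-independent by flatness of $\PP_S(E)\to S$), and identify $\shA\boxtimes_S\bigl(\pi_S^*D(S)\otimes\sO(k)\bigr)$ with $\pi_X^*\shA\otimes\sO(k)$. The identification you flag as the main bookkeeping step does go through: for $C=\pi_X^*G$ lying in $\shA_{\PP_S(E)}$, taking $F=\sO$ in the base-change criterion yields $G=\pi_{X*}C\in\shA$ by the projection formula, so $\shA\boxtimes_S\pi_S^*D(S)=\pi_X^*\shA$ and your proposal is correct, following essentially the same route as the cited proof.
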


\subsubsection{Blowing up}\label{sec:bl} Let $S$ be a smooth $B$-scheme, and $i \colon Z \hookrightarrow S$ is a smooth codimension $r \ge 2$ local complete intersection subscheme, with normal bundle $N_i$. Denote $\widetilde{S} = \Bl_Z S$ the blowing up of $S$ along $Z$, $E_Z = \PP(N_i) \subset \widetilde{S}$ the exceptional divisor. Assume $X$ be a smooth proper $S$-scheme and $X_Z := X \times_S Z$ is of expected dimension $\dim X - r$, therefore $X_Z \subset X$ is local complete intersection of codimension $r$. Denote the blowing up of $X$ along $X_Z$ by $\beta \colon \widetilde{X} = \Bl_{X_Z} X \to X$, and the inclusion of exceptional divisor by $j \colon E_{X_Z} = \PP_{X_Z}(N_i) \hookrightarrow \widetilde{X}$, and $p\colon E_{X_Z} \to X_Z$ the projection. Let $\shA \subset D(X)$ be an $S$-linear admissible subcategory, the {\em blowing up category of $\shA$ along $\shA_Z$} is defined to be:
	$$\widetilde{\shA} := \shA \boxtimes_S D(\widetilde{S}) \subset D(\widetilde{X}) \quad \text{where} \quad \shA_Z := \shA \boxtimes_S D(Z) \subset D(X_Z).$$
Any $S$-linear semiorthogonal decomposition $\shA = \langle \shA_1, \shA_2 \rangle$ induces $S$-linear semiorthogonal decomposition $\widetilde{\shA} = \langle \widetilde{\shA_1}, \widetilde{\shA_2} \rangle$. Note that the projective bundle category $\PP_{\shA_Z}(N_i) \subset D(\PP_{X_Z}(N_i))$ plays the role exceptional divisors of the blowing-up, and is equipped with functors:
	$$p^* \colon \shA_Z \rightleftarrows \PP_{\shA_Z}(N_i) \colon p_*, \qquad j^* \colon \widetilde{\shA} \rightleftarrows   \PP_{\shA_Z}(N_i) \ \colon j_*,$$
induced from the functors on ambient spaces, and still denoted by same notations.

\begin{theorem}[Orlov's blowing up formula \cite{O92}; see also {\cite[Thm. 3.3]{JL18Bl}}] \label{thm:bl} The $S$-linear functors $\beta^*\colon \shA \to \widetilde{\shA}$ and $\Psi_k = j_* \, p^* (-) \otimes \sO_{\PP(N_i)}(k) \colon \shA_Z  \to  \widetilde{\shA}$ are fully faithful, $k \in \ZZ$, and their images induce $S$-linear semiorthogonal decompositions:
	\begin{align*} \widetilde{\shA}  & = \langle \beta^* \shA, ~ (\shA_Z)_0, (\shA_Z)_1, \ldots, (\shA_Z)_{r-2} \rangle 
				= \langle (\shA_Z)_{1-r}, \ldots, (\shA_Z)_{-2}, (\shA_Z)_{-1}, ~ \beta^* \shA\rangle,	
	\end{align*}
	where $(\shA_Z)_k$ denotes the image of $\shA_Z$ under $\Psi_k$, $k \in \ZZ$.
\end{theorem}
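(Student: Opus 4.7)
My plan is to reduce the statement to Orlov's original blow-up formula \cite{O92} for derived categories of schemes and then descend the resulting decomposition to the linear category $\widetilde{\shA}$ using the base-change machinery of \S\ref{sec:bc}. The key geometric input is that, since $Z\subset S$ is l.c.i.\ of codimension $r$ and $X_Z = X\times_S Z$ is of the expected codimension $r$ in $X$, the blow-up $\widetilde{X} = \Bl_{X_Z} X$ is canonically identified with the fiber product $X\times_S \widetilde{S}$, with $E_{X_Z}\simeq X\times_S E_Z$, and this square is Tor-independent. Consequently $\widetilde{X}\to X$ is the pullback along $X\to S$ of the faithful morphism $\sigma\colon\widetilde{S}\to S$, and by definition $\widetilde{\shA} = \shA\boxtimes_S D(\widetilde{S})$ embeds naturally into $D(\widetilde{X})$.

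Next, I apply Orlov's classical blow-up formula to $\widetilde{S}$ to obtain the $S$-linear semiorthogonal decomposition
\[
D(\widetilde{S}) = \langle \sigma^* D(S),\ \Phi_0 D(Z),\ \ldots,\ \Phi_{r-2} D(Z)\rangle,
\]
where $\Phi_k = (\sigma_E)_* q^*(-)\otimes \sO(k)$ with $\sigma_E\colon E_Z\hookrightarrow\widetilde{S}$ and $q\colon E_Z\to Z$. Propagating this SOD along the Tor-independent base-change $X\to S$ by Prop.~\ref{prop:bcsod} yields an $X$-linear SOD of $D(\widetilde{X})$ with components $\beta^* D(X)$ and $\Psi_k D(X_Z)$ for $k=0,\ldots,r-2$. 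Intersecting with $\widetilde{\shA}$ via Prop.~\ref{prop:product} and Lem.~\ref{lem:tensor:bc} applied to $\widetilde{\shA} = \shA\boxtimes_S D(\widetilde{S})$ with the SOD on the second factor produces
\[
\widetilde{\shA} = \langle \shA\boxtimes_S \sigma^* D(S),\ \shA\boxtimes_S \Phi_0 D(Z),\ \ldots,\ \shA\boxtimes_S \Phi_{r-2} D(Z)\rangle,
\]
and each component identifies with $\beta^*\shA$ respectively $(\shA_Z)_k$ by compatibility of base-change with pullback, pushforward and twisting. Full faithfulness of $\beta^*$ on $\shA$ and of $\Psi_k$ on $\shA_Z$ is then automatic: the ambient functors are fully faithful by Orlov's theorem, and their essential images land in $\widetilde{\shA}$ by $S$-linearity. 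The dual decomposition indexed by $k\in\{1-r,\ldots,-1\}$ follows by running the same argument starting from the mutation-dual form of Orlov's formula (twists in the opposite range).

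The main technical obstacle is verifying, at each step, that all the Tor-independence hypotheses required to invoke Prop.~\ref{prop:bcsod}, Prop.~\ref{prop:product} and Lem.~\ref{lem:tensor:bc} genuinely hold in this setting --- most crucially, Tor-independence of $X\times_S \widetilde{S}$ (which underlies $\widetilde{X}\simeq X\times_S\widetilde{S}$) together with Tor-independence of the iterated fiber squares that appear when one mixes the geometries $\widetilde{S}\to S$, $E_Z\to Z\to S$ and $X\to S$. These all reduce to the l.c.i.\ condition on $Z\subset S$ combined with the expected-dimension assumption on $X_Z$; once they are in hand, the remainder of the argument is a purely formal categorical descent from Orlov's classical theorem, with no further geometric content required.
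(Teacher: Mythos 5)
Your proposal is correct and is essentially the argument this paper relies on: the statement is quoted from \cite[Thm.~3.3]{JL18Bl}, and the proof there is exactly your descent --- identify $\widetilde{X}$ with $X\times_S\widetilde{S}$ Tor-independently (via the regular-sequence/Koszul argument you sketch, which is where the l.c.i.\ hypothesis on $Z\subset S$ and the expected-dimension hypothesis on $X_Z$ enter), apply Orlov's classical formula to the smooth pair $Z\subset S$ to obtain an $S$-linear semiorthogonal decomposition of $D(\widetilde{S})$, and transport it to $\widetilde{\shA}=\shA\boxtimes_S D(\widetilde{S})$ by the faithful base-change formalism (Prop.~\ref{prop:bcsod}, Prop.~\ref{prop:product}, Lem.~\ref{lem:tensor:bc}).

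One justification should be repaired, though it does not affect the architecture of your proof. You assert that full faithfulness of $\beta^*$ and $\Psi_k$ is ``automatic: the ambient functors are fully faithful by Orlov's theorem.'' Orlov's theorem requires the blow-up center to be \emph{smooth}, whereas in this setting $X_Z$ is only l.c.i.\ and may well be singular --- handling such centers is precisely the extra generality of the linear-category statement, so you cannot invoke Orlov directly on $\widetilde{X}=\Bl_{X_Z}X$. Instead, full faithfulness comes out of your own reduction: Tor-independence of $X\times_S\widetilde{S}$ gives $R\beta_*\sO_{\widetilde{X}}\simeq\sO_X$ by base change from $R\sigma_*\sO_{\widetilde{S}}\simeq\sO_S$, whence $\beta^*$ is fully faithful by the projection formula; and $\Psi_k$ is the base change along the faithful morphism $X\to S$ of the fully faithful $S$-linear kernel functor $\Phi_k$, which Kuznetsov's base-change theory shows remains fully faithful with image the base-changed component (equivalently, one computes directly on the exceptional divisor $E_{X_Z}=\PP_{X_Z}(N_i)$, a projective bundle over the possibly singular $X_Z$ with $N_i$ pulled back from $Z$). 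With that substitution, the remaining identifications $\shA\boxtimes_S\sigma^*D(S)=\beta^*\shA$ and $\shA\boxtimes_S\Phi_k D(Z)=\Psi_k(\shA_Z)$, and the mutation-dual order for the left-hand decomposition, are the routine compatibilities you cite, and the proof is complete.
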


\subsubsection{Generalized universal hyperplane} \label{sec:hyp} Let $S$ be a smooth $B$-scheme, $i \colon Z \hookrightarrow S$ smooth subscheme, and assume further $Z = Z(s)$ is the zero locus of a regular section $s \in \Gamma(S,E)$ for a vector bundle $E$ of rank $r$. Then the section $s$ under the identification 
	$$H^0(\PP_S(E^\vee), \sO_{\PP(E^\vee)}(1)) = H^0(S, E),$$
 corresponds canonically a section $\tilde{s}$ of the line bundle $\sO_{\PP_S(E^\vee)}(1)$ on $\PP_S(E^\vee)$. Then the hypersurface $\shH_s : = Z(\tilde{s}) \subset \PP_S(E^\vee)$ is called {\em generalized universal hyperplane}.
Denote $\pi \colon \shH_s \to S$ the projection, then $\pi$ is a $\PP^{r-2}$-projective bundle  over $S\, \backslash \,Z$, and $\shH_s|_{\pi^{-1}(Z)} = \PP_Z(E^\vee|_Z) = \PP_Z(N_i^\vee)$, where $N_i$ is the normal bundle of $Z \subset S$ as usual. 

Let $a_X \colon X \to S $ be a smooth proper $S$-scheme such that $X_Z = X \times_S Z$ is of expected dimension $\dim X -r$. Then $X_Z$ is also cut out by the section $a_X^{*}\,s \in H^0(X, a_X^*E)$. Therefore we can similarly form  the generalized universal hyperplane $\shH_{X,s} \subset \PP_X(E)$ for $X$ with respect to the bundle $a_X^*E$ and section $a_X^*\,s$. By abuse of notation we will denote the bundle $a_X^* E$ and section $a_X^*\,s$ on $X$ still by $E$ and $s$. Denote the inclusions by $i\colon X_Z \hookrightarrow X$, $j \colon \PP_{X_Z}(N_i^\vee) \hookrightarrow \shH_{X,s}$, and the projections by $\rho\colon \PP_{X_Z}(N_i^\vee) \to X_Z$, $\pi\colon \shH_{X,s} \to X$. 

Let $\shA \subset D(X)$ be an admissible $S$-linear subcategory, then the {\em generalized universal hyperplane} for $\shA$ (with respect to vector bundle $E$ and regular section $s$) is defined to be 
	$$\shH_{\shA,s} := \shA \boxtimes_S D(\shH_s ) \subset D(\shH_{X,s}).$$
We will also write $\shH_{\shA} = \shH_{\shA,s}$ if there is no confusion. The functors on ambient spaces induce commutative diagrams of $S$-linear functors on the corresponding subcategories constructed from $\shA$, and we still denote by same notations, by abuse of notations.

\begin{theorem}[Orlov's generalized hyperplane theorem {\cite[Prop. 2.10]{O}}; see also {\cite[Thm. 3.9]{JL18Bl}}] \label{thm:hyp} The functors $j_*\,\rho^* \colon \shA_Z  \to \shH_{\shA,s}$ and $ \pi^*(-) \otimes \sO_{\PP(E^\vee)}(k) \colon \shA  \to \shH_{\shA,s} $ are fully faithful, $k \in \ZZ$, and there is $S$-linear semiorthogonal decompositions:
	\begin{align*}
	\shH_{\shA,s} & = \langle  j_* \, \rho^* \shA_Z,  ~~\pi^* \shA \otimes \sO_{\PP(E^\vee)}(1), \ldots , \pi^* \shA \otimes  \sO_{\PP(E^\vee)}(r-1)\rangle \\
	& = \langle \pi^* \shA \otimes \sO_{\PP(E^\vee)}(2-r), \ldots,  \pi^* \shA \otimes \sO_{\PP(E^\vee)}, ~~  j_* \, \rho^* \shA_Z \rangle.
	\end{align*}
\end{theorem}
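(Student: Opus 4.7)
The plan is to reduce to the absolute case $\shA = D(X)$ and then attack the geometry. For the reduction, by construction $\shH_{\shA,s} = \shA \boxtimes_S D(\shH_s)$, so Proposition~\ref{prop:bcsod} and Proposition~\ref{prop:product} let me exterior-multiply an $S$-linear SOD of $D(\shH_{X,s})$ against $\shA$ over $S$ and transport it to a SOD of $\shH_{\shA,s}$; matching the resulting components against the functors named in the statement reduces the whole theorem to the case where $\shA$ is the whole of $D(X)$.

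In the absolute case, I would realise $\shH_{X,s}$ as the divisor cut by $\tilde s \in H^0(\sO_{\PP_X(E^\vee)}(1))$ inside the projective bundle $\pi_E\colon \PP_X(E^\vee) \to X$, and exploit the tautological Koszul sequence
$$0 \to \sO_{\PP(E^\vee)}(-1) \xrightarrow{\tilde s} \sO_{\PP(E^\vee)} \to \iota_*\sO_{\shH_{X,s}} \to 0.$$
Combined with Orlov's projective bundle decomposition $D(\PP_X(E^\vee)) = \langle \pi_E^*D(X), \pi_E^*D(X)(1), \ldots, \pi_E^*D(X)(r-1)\rangle$ (Theorem~\ref{thm:app:pr_bd}) and the standard values of $\pi_{E*}\sO(j)$, a direct adjunction computation shows that the restricted functors $\pi^*(-) \otimes \sO_{\PP(E^\vee)}(k)\colon D(X) \to D(\shH_{X,s})$ are fully faithful and pairwise semiorthogonal in the window $k = 1, \ldots, r-1$, producing $r-1$ of the $r$ pieces of the target SOD.

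To produce the remaining piece $j_*\rho^*D(X_Z)$ and secure generation, I would desingularise $\shH_{X,s}$ by pulling back along the blow-up $\beta\colon \widetilde X = \Bl_{X_Z} X \to X$: the pullback of $\tilde s$ to $\PP_{\widetilde X}(\widetilde\pi^*E^\vee)$ vanishes along the exceptional fiber, and after extracting this divisor the residual section is regular, so the strict transform $\widetilde{\shH} \to \widetilde X$ is a smooth $\PP^{r-2}$-bundle, while the induced birational map $\widetilde{\shH} \to \shH_{X,s}$ collapses a $\PP^{r-2}$-bundle over $\PP_{X_Z}(N_i^\vee)$ onto $\PP_{X_Z}(N_i^\vee)$ itself. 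Orlov's blow-up formula (Theorem~\ref{thm:bl}) on $\widetilde X$ and the projective bundle formula (Theorem~\ref{thm:app:pr_bd}) on $\widetilde{\shH} \to \widetilde X$ combine to a rich SOD of $D(\widetilde{\shH})$; descending this SOD along the birational contraction, exactly one component collapses onto $\PP_{X_Z}(N_i^\vee)$ and is identified with $j_*\rho^*D(X_Z)$, while the remaining pieces match the window $\pi^*D(X)(k)$ for $k = 1, \ldots, r-1$. The second SOD in the statement is then extracted from the first by mutating the piece $j_*\rho^*D(X_Z)$ across the window, governed by relative Serre duality along the $\PP^{r-1}$-bundle $\PP_{X_Z}(N_i^\vee) \to X_Z$.

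The main obstacle I expect lies in the descent step: among the many blow-up-and-projective-bundle components of $D(\widetilde{\shH})$, one must pinpoint precisely which one descends to $j_*\rho^*D(X_Z)$ and verify that no other component leaves a residual class under the contraction. A reasonable bypass is to check fully-faithfulness and semiorthogonality of $j_*\rho^*$ by direct Hom computations using the conormal geometry of $\PP_{X_Z}(N_i^\vee) \subset \shH_{X,s}$, and then deduce generation by a $K$-theoretic rank count against the $r$ components supplied by the Koszul sequence of $\sO_{\shH_{X,s}}$ on $\PP_X(E^\vee)$, trading geometric sophistication for homological bookkeeping.
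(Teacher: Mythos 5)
First, a point of comparison: the paper itself contains no proof of Theorem~\ref{thm:hyp} --- it is imported from Orlov \cite[Prop.~2.10]{O}, with the linear-category version taken from \cite[Thm.~3.9]{JL18Bl}, whose argument runs along exactly your first two moves: prove the geometric case by Koszul-plus-adjunction window computations, then transport the $S$-linear decomposition through exterior products via Propositions~\ref{prop:bcsod} and~\ref{prop:product}. Two corrections on that reduction: you should exterior-multiply $\shA$ against the $S$-linear SOD of $D(\shH_s)$, the universal hyperplane over $S$, not of $D(\shH_{X,s})$, since it is $\shA \boxtimes_S D(\shH_s) = \shH_{\shA,s}$ that typechecks; and you must still verify that base change identifies $\shA \boxtimes_S (j_*\rho^* D(Z))$ with $j_*\rho^*\shA_Z$ and $\shA \boxtimes_S (\pi^* D(S)\otimes\sO(k))$ with $\pi^*\shA\otimes\sO(k)$ --- routine with Kuznetsov's base-change machinery, but not free. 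Your window computation for $k=1,\dots,r-1$ (via $\iota_*\iota^* \cong (-)\otimes\{\sO(-1)\to\sO\}$ and the vanishing of $\pi_{E*}\sO(j)$ for $1-r\le j\le -1$) is correct, as is the mechanism (mutation through the whole category equals a Serre twist) relating the two stated decompositions.

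The genuine gap is in generation. ``Descending'' the SOD of $D(\widetilde{\shH})$ along the contraction $\widetilde{\shH}\to\shH_{X,s}$ is not a formal operation: semiorthogonal decompositions do not push forward along birational morphisms, and sorting the $(r-1)$ copies of $D(X)$ and $(r-1)^2$ copies of $D(X_Z)$ in $D(\widetilde{\shH})$ into $\beta^*D(\shH_{X,s})$ versus the $(r-2)$ copies of $D(\PP_{X_Z}(N_i^\vee))$ contributed by the blow-up $\widetilde{\shH}\to\shH_{X,s}$ is precisely the content of the theorem; the numerology checks (which is why the strategy is plausible), but the mutation bookkeeping you wave at \emph{is} the proof, and as written this step assumes its own conclusion. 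Worse, your declared bypass fails outright: generation cannot be deduced from a $K$-theoretic rank count. The right orthogonal of your candidate collection could a priori be a (quasi-)phantom with vanishing additive invariants, and even setting phantoms aside, equality of $K_0$-ranks of a full triangulated subcategory and its ambient category does not imply the inclusion is essentially surjective. The standard closing move --- Orlov's --- is direct: take $C$ with $\pi_*(C\otimes\sO_{\PP(E^\vee)}(-k))=0$ for $k=1,\dots,r-1$, use the Koszul sequence for $\iota_*\sO_{\shH_{X,s}}$ together with the projective-bundle SOD of $D(\PP_X(E^\vee))$ (Theorem~\ref{thm:app:pr_bd}) to constrain $\iota_*C$, and conclude $C\in j_*\rho^*D(X_Z)$ by a d\'evissage along $\PP_{X_Z}(N_i^\vee)$; your direct Hom computations already give full faithfulness and semiorthogonality of $j_*\rho^*$, so replacing the rank count by this orthogonal-vanishing argument (or genuinely carrying out the mutations in the blow-up picture) would close the proof.
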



\subsection{Lefschetz categories} \label{sec:lef} Lefschetz categories are the key ingredients for HPD theory. References are \cite{Kuz07HPD}, and \cite{Kuz08Lef, JLX17, P18, KP18, JL18Bl}. Let $\shA \subset D(X)$ be a fixed $\PP(V)$-linear admissible subcategory of a $\PP(V)$-scheme $X$, then a {\em right Lefschetz decomposition} of $\shA$ with respect to $\sO_{\PP(V)}(1)$ is a semiorthogonal decomposition of the form:
	\begin{equation}\label{lef:A} 
	\shA = \langle \shA_0, \shA_1(1), \ldots, \shA_{m-1}(m-1)\rangle,
	\end{equation}
	with $\shA_0 \supset \shA_1 \supset \cdots \supset \shA_{m-1}$ a descending sequence of admissible subcategories, where for a subcategory $\shA_{*} \subset \shA$,  $\shA_{*}(k) = \shA_{*} \otimes \sO_{\PP(V)}(k)$ denotes its image under the autoequivalence $\otimes \sO_{\PP(V)}(k)$, for $k \in \ZZ$. Dually, a {\em left Lefschetz decomposition} of $\shA$ is a semiorthogonal decomposition of the form:
	\begin{equation}\label{duallef:A} 
	\shA = \langle \shA_{1-m} (1-m), \ldots, \shA_{-1}(-1) , \shA_{0}\rangle,
	\end{equation}	
	with $\shA_{1-m} \subset \cdots \subset \shA_{-1} \subset \shA_{0}$ an ascending sequence of admissible subcategories. 
	
A $\PP(V)$-linear category $\shA$ is called a {\em Lefschetz category}, or is said to have a {\em Lefschetz structure}, if it is equipped with both a right and a left Lefschetz decomposition (with same $\shA_0$ and $m$) as above. If $D(X)$ has a Lefschetz structure, where $X$ is a $\PP(V)$-variety, then $X$ is called a {\em Lefschetz variety}. The number $m$ is called the {\em length} of $\shA$, and $\shA_0$ is called the {\em center} of $\shA$.  A Lefschetz decomposition for $\shA$ is totally determined by its center $\shA_0$ via relations $\shA_k = {}^\perp (\shA_0(-k)) \cap \shA_{k-1}$ and  $\shA_{-k} = (\shA_0(k))^\perp \cap \shA_{1-k}$, where $k=1,2,\ldots, m-1$, see \cite[Lem. 2.18]{Kuz08Lef}. See references above for more properties of Lefschetz categories.

For a Lefschetz category $\shA$ as above, denote $\foa_k: = \shA_{k+1}^\perp \cap \shA_k$, the right orthogonal of $\shA_{k+1}$ inside $\shA_{k}$ for $0 \le k \le m-1$. Then the admissible subcategories $\foa_k$'s are the {\em primary components} of $\shA$. For $0 \le k \le m-1$. It holds that 
	$$\shA_k = \langle \foa_{k}, \shA_{k+1} \rangle = \langle \foa_{k}, \foa_{k+1}, \ldots, \foa_{m-1}\rangle.$$
Dually let $\foa_{-k}: = {}^\perp \shA_{-k-1} \cap \shA_{-k}$ be the left orthogonal of $\shA_{-k-1}$ inside $\shA_{-k}$ for $0 \le k \le m-1$. Then $\foa_{-k}$ are also admissible subcategories and for $0 \le k \le m-1$,
	$$\shA_{-k} =  \langle \shA_{-(k+1)} , \foa_{-k}\rangle =  \langle \foa_{1-m}, \ldots, \foa_{-1-k}, \foa_{-k} \rangle.$$
	
In \cite{Kuz07HPD} it is required that a $\PP(V)$-linear Lefschetz category $\shA$ should satisfies
	$$ {\rm length} (\shA) < \rank V.$$
This condition is called {\em moderate} condition in \cite{P18}. In this paper we follow  \cite{Kuz07HPD} and require all Lefschetz categories to be {\em moderate}. In fact, a non-moderate Lefschetz category can always be refined to be a moderate one, see \cite[Lem. 2.22]{JL18Bl}.

The following criterion for equivalence of Lefschetz categories is useful:
\begin{lemma}[{\cite[Lem. 2.14]{KP18}}] \label{lem:equiv-lef} Let $\phi \colon \shA \to \shB$ is a $\PP(V)$-linear functor between two $\PP(V)$-linear Lefschetz categories $\shA$ and $\shB$ with Lefschetz centers $\shA_0$ and respectively $\shB_0$. Assume $\phi$ admits left adjoint $\phi^* \colon \shB \to \shA$. If $\phi$ induces $\shA_0 \simeq \shB_0$, $\phi^*$ induces $\shB_0 \simeq \shA_0$, then $\phi$ is an equivalence of Lefschetz categories. 
\end{lemma}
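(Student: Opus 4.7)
The plan is to exploit $\PP(V)$-linearity to bootstrap the hypothesised equivalence on centers $\shA_0 \simeq \shB_0$ to an equivalence of the full Lefschetz categories. First, observe that the left adjoint $\phi^*$ is automatically $\PP(V)$-linear, as a formal Yoneda manipulation shows $\phi^*(b \otimes \sO(k)) \simeq \phi^*(b) \otimes \sO(k)$ using $\PP(V)$-linearity of $\phi$ together with the invertibility of $\otimes \sO(k)$. Moreover, the restrictions $\phi|_{\shA_0}$ and $\phi^*|_{\shB_0}$ form an adjoint pair between the centers (each preserves centers by hypothesis), and since both are equivalences the uniqueness of adjoints forces them to be mutually inverse. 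Consequently the counit $\phi^*\phi \to \id_\shA$ and unit $\id_\shB \to \phi\phi^*$ are isomorphisms when restricted to $\shA_0$ and $\shB_0$ respectively.

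Next, show inductively that $\phi(\shA_k) \subseteq \shB_k$ for all $0 \le k \le m-1$, using the intrinsic characterization $\shA_k = {}^\perp(\shA_0(-k)) \cap \shA_{k-1}$ recorded in the excerpt. The base case is given. For the inductive step, take $a \in \shA_k$ and $a' \in \shA_0$; $\PP(V)$-linearity of $\phi$, the adjunction, and the identity $\phi^*\phi(a) \simeq a$ (from the first step, since $a \in \shA_{k-1} \subseteq \shA_0$) yield
\begin{align*}
\Hom_\shB(\phi(a), \phi(a') \otimes \sO(-k))
&\simeq \Hom_\shB(\phi(a), \phi(a' \otimes \sO(-k))) \\
&\simeq \Hom_\shA(\phi^*\phi(a), a' \otimes \sO(-k)) \\
&\simeq \Hom_\shA(a, a' \otimes \sO(-k)) = 0,
\end{align*}
where the final vanishing is the defining property of $\shA_k$. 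As $\phi|_{\shA_0}$ is essentially surjective onto $\shB_0$, this forces $\phi(a) \in {}^\perp \shB_0(-k)$; combined with $\phi(a) \in \shB_{k-1}$ from the induction hypothesis, one obtains $\phi(a) \in \shB_k$. A mirror-image argument handles $\phi^*(\shB_k) \subseteq \shA_k$ and the left Lefschetz components via $\shA_{-k} = (\shA_0(k))^\perp \cap \shA_{-(k-1)}$.

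Finally, invoke $\PP(V)$-linearity once more: the counit $\phi^*\phi \to \id_\shA$, being iso on $\shA_0$, is iso on every twist $\shA_0(k)$, hence on every Lefschetz component $\shA_k(k) \subseteq \shA_0(k)$. Since the locus of objects on which a natural transformation between exact functors is an isomorphism is triangulated, and the Lefschetz components generate $\shA$ via the decomposition $\langle \shA_0, \shA_1(1), \ldots, \shA_{m-1}(m-1)\rangle$, the counit is iso throughout $\shA$, so $\phi$ is fully faithful. The symmetric argument applied to the unit gives $\phi^*$ fully faithful, whence $\phi$ is essentially surjective and therefore an equivalence, compatible with the Lefschetz structures by the second step. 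The main obstacle is the inductive propagation: a priori one does not know $\phi^*\phi \simeq \id$ outside $\shA_0$, yet the formula for $\shA_k$ fortuitously only tests objects against $\shA_0(-k)$, a twist of the center where the identity has already been established; once this feature is recognised, the remaining arguments are formal.
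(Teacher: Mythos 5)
Your proof is correct and follows essentially the same route as the paper's: show the counit (resp.\ unit) is an isomorphism on the Lefschetz center, propagate to all twists $\shA_0(kH)$ (resp.\ $\shB_0(kH)$) by $\PP(V)$-linearity, and conclude on all of $\shA$ (resp.\ $\shB$) via the Lefschetz decomposition. The extra details you supply --- $\PP(V)$-linearity of $\phi^*$, mutual inverseness on centers via uniqueness of adjoints, and the inductive check $\phi(\shA_k) \subseteq \shB_k$ for compatibility of Lefschetz structures --- are legitimate fill-ins of steps the paper leaves implicit, not a different argument.
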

\begin{proof}The fully faithfulness of $\phi$ follows directly from $\cone (\phi^*\circ \phi \to \id)$ is zero on $\shA_k \subset \shA_0$, hence on $\shA_k(kH)$ by $\PP(V)$-linearity, hence zero on $\shA$; the essential surjectivity follows from ${\rm Im} (\phi) \supset \shB_0$, and hence contains all $\shB_0(k H)$ by $\PP(V)$-linearity of $\phi$, $k \in \ZZ$, and therefore contains the whole $\shB$.
\end{proof}

\subsection{Homological projective duality}\label{sec:HPD}
Let $Q  = \{(x,[H]) \mid x \in H \}\subset \PP(V) \times \PP(V^\vee)$ be the universal quadric for $\PP(V)$ (or equivalently for $\PP(V^\vee)$). Then for a $\PP(V)$-variety $X$, the {\em universal hyperplane $\shH_{X}$ for $X \to \PP(V)$},  defined in introduction \S \ref{sec:intro:classical}, can also be defined as
	$$\shH_{X} \equiv \shH_{X \, /\PP(V)} : = X \times_{\PP(V)} Q \subset X \times \PP(V^\vee).$$
Denote the inclusion by $\delta_{\shH} \colon \shH_{X} \hookrightarrow X \times \PP(V^\vee)$. If $\shA \subset D(X)$ is a $\PP(V)$-linear admissible subcategory, then the {\em universal hyperplane} $\shH_{\shA}$ is defined to be:
	$$\shH_{\shA} \equiv  \shH_{\shA \, /\PP(V)}: = (\shA)_{\shH_X} = \shA \boxtimes_{\PP(V)} D(Q) \subset D(\shH_X),$$
where $(\shA)_{\shH_X}$ denotes the base-change of $\shA$ along $\shH_X \to \PP(V)$. 

Notice that $Q = \PP_{\PP(V)}(\Omega_{\PP(V)}^1)$ is a projective bundle, therefore the construction and results of projective bundles (see \S \ref{sec:app:proj_bd}) can be directly applied to
	$\shH_{\shA} = \PP_{\shA}(\Omega_{\PP(V)}^1).$
Notice also that the inclusion $\delta_{\shH}$ induces adjoint functors $\delta_{\shH}^* \colon \shA \boxtimes D(\PP(V^\vee)) \rightleftarrows \shH_{\shA} \colon \delta_{\shH *}$ as before.

\begin{definition} 
Let $\shA$ be a $\PP(V)$-linear Lefschetz category with Lefschetz center $\shA_0$. Then the {\em HPD category $\shA^\hpd$} of $\shA$ over $\PP(V)$ is defined to be
	$$\shA^\hpd  \equiv (\shA)_{/\PP(V)}^\hpd :=  \{C \in \shH_{\shA}  \mid \delta_{\shH*} \,C \in \shA_0 \boxtimes D(\PP(V^\vee))\} \subset \shH_{\shA}.$$
If $\shA = D(X)$ for a variety $X$ with $X \to \PP(V)$, and furthermore if there exists a variety $Y$ with $Y \to \PP(V^\vee)$, and a Fourier-Mukai kernel $\shP \in D(Y \times_{\PP(V^\vee)} \shH_X)$ such that the $\PP(V^\vee)$-linear Fourier Mukai functor $\Phi_{\shP}^{Y \to \shH_X} \colon D(Y) \to D(\shH)$ induces an equivalence of categories $D(Y) \simeq D(X)^\hpd$, then $Y$ is called the {\em HPD variety} of $X$.
\end{definition}

\begin{lemma}[{\cite{Kuz07HPD,JLX17,P18}}] \label{lem:sodH} There is a $\PP(V^\vee)$-linear semiorthogonal decomposition
	$$\shH_{\shA} = \big \langle \shA^\hpd, ~~\delta_{\shH}^* (\shA_1(H) \boxtimes D(\PP(V^\vee))), \ldots,  \delta_{\shH}^* (\shA_{m-1}((m-1)H) \boxtimes D(\PP(V^\vee))) \big \rangle.$$
\end{lemma}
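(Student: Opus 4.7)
The plan is to derive the claimed semiorthogonal decomposition by combining Orlov's projective bundle theorem for the universal quadric $Q = \PP_{\PP(V)}(\Omega^1_{\PP(V)})$ with the Lefschetz structure on $\shA$, and then to identify the residual component with $\shA^\hpd$ via adjunction and the base-change formalism. This mirrors Kuznetsov's original construction of HPD in \cite{Kuz07HPD}: I will set up a rectangular ``Lefschetz $\times$ projective-bundle'' array of admissible subcategories inside $\shH_\shA$, regroup its columns into the pieces $\delta_\shH^*(\shA_i(iH) \boxtimes D(\PP(V^\vee)))$, and show the orthogonal complement equals $\shA^\hpd$.

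First, since $Q \to \PP(V)$ is a $\PP^{N-2}$-bundle with $N = \rank V$, Theorem \ref{thm:app:pr_bd} applied to the $\PP(V)$-linear category $\shA$ produces a $\PP(V)$-linear decomposition
$$\shH_\shA = \shA \boxtimes_{\PP(V)} D(Q) = \langle \pi^*\shA,\; \pi^*\shA(H'),\; \ldots,\; \pi^*\shA((N-2)H')\rangle,$$
where $H'$ denotes the class pulled back from $\PP(V^\vee)$. Substituting the Lefschetz decomposition of $\shA$ yields $m(N-1)$ blocks $\pi^*\shA_i(iH+jH')$ with $0\le i\le m-1$, $0\le j\le N-2$. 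Second, for each $1\le i\le m-1$, I identify the $i$-th ``column'' $\langle \pi^*\shA_i(iH+jH')\mid 0\le j\le N-2\rangle$ with $\delta_\shH^*(\shA_i(iH) \boxtimes D(\PP(V^\vee)))$: the latter is the $\PP(V^\vee)$-linear envelope of $\pi^*\shA_i(iH)$ inside $\shH_\shA$, and the projective bundle formula realizes this envelope precisely as the span of the $\sO(jH')$-twists. Third, I verify semiorthogonality among these columns (for $j>i\ge 1$) via the adjunction $\delta_\shH^*\dashv \delta_{\shH*}$ combined with the Koszul triangle $\delta_{\shH*}\delta_\shH^*(-) \simeq \cone((-)(-H-H') \to (-))$ coming from the fact that $\shH_X \subset X\times\PP(V^\vee)$ is a divisor of class $H+H'$; this reduces the required Hom-vanishing to the Lefschetz semiorthogonalities $\Hom(\shA_j(jH), \shA_i(iH)) = 0$ and $\Hom(\shA_j(jH), \shA_{i-1}((i-1)H)) = 0$ inside $\shA$. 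Finally, to identify the right orthogonal $\sD$ of the columns with $\shA^\hpd$, I first check $\delta_{\shH*}C \in \shA \boxtimes D(\PP(V^\vee))$ for every $C\in\shH_\shA$: by the base-change characterization $\pi_* C \in \shA$, so $\pi^*({}^\perp\shA)\subset {}^\perp\shH_\shA$, and by $\PP(V^\vee)$-linearity of ${}^\perp\shH_\shA$ this extends to $\delta_\shH^*(({}^\perp\shA)\boxtimes D(\PP(V^\vee)))\subset {}^\perp\shH_\shA$, whence adjunction gives the image condition. Then Proposition \ref{prop:product} together with the Lefschetz decomposition of $\shA$ yields $C\in\sD$ iff $\delta_{\shH*}C \perp \shA_i(iH)\boxtimes D(\PP(V^\vee))$ for all $i\ge 1$, iff $\delta_{\shH*}C \in \shA_0\boxtimes D(\PP(V^\vee))$, iff $C\in\shA^\hpd$ by definition.

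The main obstacle is the semiorthogonality check in the third step, which requires the Koszul triangle to couple the $H$- and $H'$-gradings so that the twist by $-H-H'$ places the computation into two adjacent Lefschetz positions simultaneously; careful bookkeeping of these shifts is essential. Once this is in hand, the remaining admissibility claims follow from the base-change formalism of \S\ref{sec:bc}, and the match of block counts $m(N-1) = (N-1) + (m-1)(N-1)$ forces $\shA^\hpd$ to occupy exactly the remaining slot in the decomposition, so that the semiorthogonal sequence is indeed a decomposition of the whole $\shH_\shA$.
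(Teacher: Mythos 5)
Your core argument (Steps 3 and 4) is correct and is essentially the standard proof — the one Kuznetsov gives in \cite{Kuz07HPD} and the one this paper runs for the $n$-fold generalization in Lemma \ref{lem:sod:nH}: fully faithfulness and semiorthogonality of the components $\delta_{\shH}^*(\shA_i(iH)\boxtimes D(\PP(V^\vee)))$ via the divisor triangle $\delta_{\shH*}\delta_{\shH}^*(-)\simeq \cone((-)(-H-H')\to(-))$ and the Lefschetz vanishings, the membership $\delta_{\shH*}C\in\shA\boxtimes D(\PP(V^\vee))$ for $C\in\shH_\shA$ (this is Lemma \ref{lem:char:nHA} with $n=1$), and the identification of the right orthogonal with $\shA^\hpd$ by adjunction together with Proposition \ref{prop:product}. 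However, your Steps 1--2 and the closing counting argument contain a genuine error. The category $\delta_{\shH}^*(\shA_i(iH)\boxtimes D(\PP(V^\vee)))$ is \emph{not} the grid column $\langle \pi^*\shA_i(iH+jH')\mid 0\le j\le N-2\rangle$ from Orlov's projective bundle formula: by your own fully-faithfulness computation it is equivalent to $\shA_i\boxtimes D(\PP(V^\vee))$, hence by Proposition \ref{prop:product} it carries $N$ semiorthogonal blocks $\pi^*\shA_i(iH+jH')$, $0\le j\le N-1$, one more than the column. The projective bundle theorem says that $N-1$ consecutive $H'$-twists of the \emph{whole} $\pi^*\shA$ generate $\shH_\shA$; it does not say that $N-1$ consecutive twists of a $\PP(V)$-linear subcategory generate its $\PP(V^\vee)$-linear envelope. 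A concrete check: take $X=\PP(V)=\PP^2$, $\shA=D(\PP^2)$ with $\shA_0=\langle\sO,\sO(H)\rangle$, $\shA_1=\langle\sO(H)\rangle$, so $\shH_\shA=D(Q)$ for the flag threefold $Q\subset\PP^2\times\PP^{2\vee}$, with $K_0$ of rank $6=m(N-1)\cdot\rank$-count. The component $\delta_{\shH}^*(\shA_1(H)\boxtimes D(\PP^{2\vee}))$ contains the semiorthogonal exceptional triple $\sO_Q(H),\sO_Q(H+H'),\sO_Q(H+2H')$, of $K_0$-rank $3$, which cannot lie in the rank-$2$ span of your column $\{\sO_Q(H),\sO_Q(H+H')\}$. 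Consequently the lemma's decomposition is \emph{not} a regrouping of the $m(N-1)$ grid blocks, and the identity $m(N-1)=(N-1)+(m-1)(N-1)$ "forcing $\shA^\hpd$ into the remaining slot" rests on a false premise: the middle components account for $(m-1)N$-worth of blocks, leaving for $\shA^\hpd$ a residue (of $K_0$-rank $a_0N-\sum_i a_i$ when $a_i=\rank K_0(\shA_i)$) that is genuinely glued and not a product category — this is exactly why the lemma has content.

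The repair is simply to delete Steps 1--2 and the counting: once you know (a) the functors $\delta_{\shH}^*$ are fully faithful on $\shA_i(iH)\boxtimes D(\PP(V^\vee))$ for $1\le i\le m-1$ with semiorthogonal images (your Koszul-triangle computation, which only needs $\Hom(\shA_j(jH),\shA_i(iH))=0$ and $\Hom(\shA_j(jH),\shA_i((i-1)H))=0$ for $j\ge i\ge 1$), and (b) these images are admissible in $\shH_\shA$ (fully faithful images of admissible categories under a functor admitting both adjoints, $\delta$ being a divisorial embedding), then $\shH_\shA=\langle\shD,\ \delta_{\shH}^*(\shA_1(H)\boxtimes D(\PP(V^\vee))),\ldots\rangle$ holds automatically with $\shD$ the right orthogonal, and your adjunction argument identifies $\shD=\{C\mid \delta_{\shH*}C\in\shA_0\boxtimes D(\PP(V^\vee))\}=\shA^\hpd$. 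No generation-by-counting is needed, and this is precisely the shape of the paper's appendix proof of Lemma \ref{lem:sod:nH} specialized to $n=1$.
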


The HPD is a {\em reflexive} correspondence between Lefschetz categories over $\PP(V)$ and $\PP(V^\vee)$. More precisely, the HPD category $\shA^\hpd$ is a Lefschetz category with respect to $\otimes \sO_{\PP(V^\vee)}(1)$, and there is a $\PP(V)$-linear equivalence of Lefschetz categories $\shA \simeq (\shA^\hpd)^\hpd$. See \cite{Kuz07HPD,JLX17,P18}.

\begin{lemma}[{\cite{Kuz07HPD,JLX17,P18}}] \label{lem:hpdcat} Denote $\gamma \colon \shA^\hpd \hookrightarrow \shH_\shA$ the inclusion, and $\shA^\hpd_0$ the Lefschetz center of the HPD category $\shA^\hpd$. Then for any $a \in \shA_0$, we have 
	$\pi_* \cone( \pi^* a \to \gamma \gamma^* \pi^* a) = 0.$
Furthermore, there are mutually inverse equivalences between Lefschetz centers:
	$$\pi_* \circ \gamma \colon \shA_0^\hpd \simeq \shA_0, \qquad \gamma^* \circ \pi^* \colon \shA_0 \simeq \shA_0^\hpd.$$
\end{lemma}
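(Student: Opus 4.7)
The plan is to combine the semiorthogonal decomposition of Lemma \ref{lem:sodH} with the projective bundle structure of $\pi \colon \shH_X \to X$. Since $\shH_X = X \times_{\PP(V)}\PP_{\PP(V)}(\Omega^1_{\PP(V)})$ is a $\PP^{N-2}$-bundle over $X$ (where $N = \rank V$), Orlov's projective bundle formula gives $\pi_* \pi^* a = a$ for $a \in \shA_0$. So the cone identity $\pi_*\cone(\pi^*a \to \gamma\gamma^*\pi^*a) = 0$ is equivalent to $\pi_*\gamma\gamma^*\pi^*a \simeq a$, i.e.\ to $\pi_*\gamma \circ \gamma^*\pi^* = \id_{\shA_0}$ — one half of the claimed mutual inverse.

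The map $\pi^*a \to \gamma\gamma^*\pi^*a$ is the unit of the adjunction $\gamma^* \dashv \gamma$. Working in the dual SOD of $\shH_\shA$ in which $\shA^\hpd$ sits as the \emph{rightmost} component (the Lefschetz dual of Lemma \ref{lem:sodH}, using the left Lefschetz decomposition of $\shA$), this unit fits into a Postnikov triangle
\[
F \to \pi^*a \to \gamma\gamma^*\pi^*a,
\]
with $F$ in the subcategory generated by $\delta_\shH^*(\shA_{-k}(-kH) \boxtimes D(\PP(V^\vee)))$ for $k=1,\ldots,m-1$. Thus I need $\pi_*F = 0$. Using the Koszul description $\delta_{\shH *}\sO_{\shH_X} = \cone(\sO(-H-H') \to \sO)$ (from the fact that $\shH_X \subset X \times \PP(V^\vee)$ is the zero locus of the universal section of $\sO(H+H')$) together with the projection formula, one computes for $b \in \shA_{-k}(-kH)$ and $G \in D(\PP(V^\vee))$
\[
\pi_*\delta_\shH^*(b \boxtimes G) = \cone\bigl(b(-H) \otimes R\Gamma(G(-H')) \to b \otimes R\Gamma(G)\bigr).
\]
Expanding $G$ in the Beilinson basis $\{\sO(jH')\}$ on $\PP(V^\vee)$ and using the vanishing $R\Gamma(\PP(V^\vee), \sO(jH'))=0$ for $-(N-1) \le j \le -1$, combined with the constraint that only twists inside this vanishing window enter the Postnikov pieces of $\pi^*a$ (this being where the moderate Lefschetz hypothesis $m < N$ enters crucially), yields $\pi_*F = 0$.

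With the cone identity in hand, the mutual-inverse equivalences drop out. The functor $\pi_*\gamma$ lands in $\shA_0$: for $c \in \shA_0^\hpd \subset \shA^\hpd$, the defining property $\delta_{\shH *}c \in \shA_0 \boxtimes D(\PP(V^\vee))$ gives $\pi_*c = pr_{X*}\delta_{\shH *}c \in \shA_0$. Dually, $\gamma^*\pi^*a$ lies in $\shA_0^\hpd$ by the corresponding characterization of the HPD Lefschetz center. The cone identity yields $\pi_*\gamma \circ \gamma^*\pi^* = \id_{\shA_0}$, and the reverse identity $\gamma^*\pi^* \circ \pi_*\gamma = \id_{\shA_0^\hpd}$ then follows from Lemma \ref{lem:equiv-lef} applied to this adjoint pair, once compatibility with the Lefschetz structures is verified. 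The main obstacle will be the vanishing $\pi_*F = 0$: the cone formula above fails for arbitrary $G$ (e.g.\ $G = \sO$ gives $\pi_*\delta_\shH^*(b \boxtimes \sO) = b \ne 0$), so the delicate point is verifying that the actual Postnikov pieces of $\pi^*a$ sit inside the cohomologically vanishing range.
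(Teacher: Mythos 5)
The paper gives no proof of this lemma---it is quoted from \cite{Kuz07HPD,JLX17,P18}---so your attempt must stand on its own, and it does not: its central step is missing, as you yourself concede in your last sentence. Your reduction is fine ($\pi \colon \shH_X \to X$ is a $\PP^{N-2}$-bundle, so $\pi_*\pi^* = \id$ and the cone identity amounts to $\pi_*\gamma\gamma^*\pi^* \simeq \id$ on $\shA_0$), and your pushforward formula $\pi_*\delta_{\shH}^*(b \boxtimes G) = \cone\bigl(b(-H)\otimes R\Gamma(G(-H')) \to b \otimes R\Gamma(G)\bigr)$ is correct. But the decisive claim---that the pieces of the projection of $\pi^* a$ have $\PP(V^\vee)$-factors $G$ confined to the window $\langle \sO(2-N), \ldots, \sO(-1)\rangle$, where both $R\Gamma(G)$ and $R\Gamma(G(-H'))$ vanish---is exactly the nontrivial content of the lemma, and you assert it (``only twists inside this vanishing window enter the Postnikov pieces'') without argument. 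Mere membership of the cone in the complement of $\shA^\hpd$ gives no control on $G$, as your own example $G = \sO$ shows; in the cited references this control is extracted by an explicit induction/mutation computation of the projection functors onto the components $\delta_{\shH}^*(\shA_k(kH)\boxtimes -)$, using the Koszul resolution of $\sO_{\shH_X}$ together with the Beilinson resolution on $\PP(V^\vee)$, and this computation is also where moderateness $m < N$ genuinely enters. Without it, the proof has a hole precisely where the lemma is hard.

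Two further slips. First, you have the adjoints crossed: the unit $\pi^* a \to \gamma\gamma^*\pi^* a$ of $\gamma^* \dashv \gamma$ pairs with the decomposition of Lemma \ref{lem:sodH}, in which $\shA^\hpd$ is the \emph{leftmost} component, so the third vertex of the triangle lies in the subcategory generated by the \emph{positively} twisted pieces $\delta_{\shH}^*(\shA_k(kH)\boxtimes D(\PP(V^\vee)))$, $k = 1,\ldots,m-1$---not the negatively twisted ones. If $\shA^\hpd$ sat rightmost in a dual decomposition, the relevant map would be the counit $\gamma\gamma^!\pi^* a \to \pi^* a$ of the \emph{right} adjoint; and the fact that the same category $\shA^\hpd$ occupies the rightmost slot of the dual decomposition is itself a nontrivial theorem (moderateness again), not something to invoke in passing. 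Second, Lemma \ref{lem:equiv-lef} runs the wrong way for your final step: it upgrades equivalences of Lefschetz \emph{centers} to an equivalence of Lefschetz categories, so it cannot produce the center equivalence $\gamma^*\pi^*\circ\pi_*\gamma \simeq \id_{\shA_0^\hpd}$. What does work: $\gamma^*\pi^* \dashv \pi_*\gamma$, the cone identity says the unit of this adjunction is invertible on $\shA_0$, and $\gamma^*\pi^*$ is essentially surjective onto $\shA_0^\hpd$ by the characterization of the HPD center in the cited references; these three facts together force the two functors to be mutually inverse on centers, with no appeal to Lemma \ref{lem:equiv-lef}.
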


The fundamental result of HPD theory is the Kuznetsov's HPD theorem for linear sections \cite{Kuz07HPD}. Since we will not use this result, we refer the readers to the references \cite{Kuz07HPD,Kuz14SOD, RT15HPD, JLX17,P18, JL18Bl} for its precise statement and various applications.

\subsection{$n$-HPD category} \label{sec:nHPD}  The construction of last section can be generalized to more than one $X$. For varieties $X_k$ with morphisms $X_k \to \PP(V)$, where $k=1,2,\cdots n$, we can define the {\em $n$-universal hyperplane} for $X_k \to \PP(V)$ to be: 
	$$\delta_{\shH} \colon \shH(X_1, \ldots, X_n) = \{(x_1, \cdots, x_n,  [H]) \mid x_1,\ldots, x_n \in H \} \hookrightarrow \prod_{k=1}^{n} X_k \times \PP(V^\vee).$$
It is the zero loci of the canonical regular section $\sigma$ of the rank $n$ vector bundle $\bigoplus_{k=1}^n \sO(H_k+ H')$, where $\sigma$ is determined by morphisms $X_k \to \PP(V)$ under the identification
	$$H^0(\prod_{k=1}^n X_k \times \PP V, \bigoplus_{k=1}^n \sO(H_k+ H')) = \bigoplus_{k=1}^n H^0(X_k, V\otimes \sO(H_k)),$$ 
and $H_k$ is the hyperplane class for $X_k$, i.e. the pulling back of $\sO_{\PP(V)}(H)$, and $H'$ is the hyperplane class of $\PP(V^\vee)$. If $n=1$, $X_1=X$, then $\shH(X) = \shH_{X}$ is the usual universal hyperplane. The variety $Q_n = \shH(\PP(V), \ldots ,\PP(V))$ is called the {\em $n$-universal hyperplane}. Note that $Q_n$ is universal in the sense that $\shH(X_1, \ldots, X_n) = (X_1 \times \cdots \times X_n)  \times_{\PP(V) \times \cdots \PP(V)} Q_n$ for all $X_k \to \PP(V)$. Note that $Q_1 = Q  \subset \PP(V) \times \PP(V^\vee)$ is nothing but the usual universal quadric.

Let $\shA^{(k)} \subset D({X_k})$ be $\PP(V)$-linear admissible subcategories, $k=1,\ldots,n$, where ${X_k} \to \PP(V)$ are smooth varieties. Then the {\em $n$-universal hyperplane category $\shH(\shA^{(1)},\ldots, \shA^{(n)})$} for the $\PP(V)$-linear categories $\shA^{(k)}$ is the category
	\begin{equation}
	\label{eq:def:n-Hyp}
	\shH(\shA^{(1)},\ldots, \shA^{(n)})  : = (\shA^{(1)} \boxtimes \cdots \boxtimes \shA^{(n)}) \boxtimes_{~\PP(V) \times \cdots \times \PP(V)} D(Q_n).
	\end{equation}

In the following we simply denote $\shH = \shH(X_1, \ldots, X_n)$, and recall  $\delta_{\shH}: \shH \hookrightarrow \prod_{k=1}^{n} X_k \times \PP(V^\vee)$ is the inclusion. Denote $\pi: \shH \to \prod X_k$ and $h: \shH \to \PP(V^\vee)$ the projections. 
Then we have a diagram of $\PP(V)$-linear functors:
	$$
	\begin{tikzcd}
	(\shA^{(1)} \boxtimes \cdots \boxtimes \shA^{(n)})  \boxtimes  D( \PP(V^\vee)) \ar[shift left]{d}{\delta_{\shH *}}  	\ar[hook]{r}{\gamma_{{}_\shA} \times \Id} 	&	D(X_1 \times \cdots \times X_n) \boxtimes D( \PP(V^\vee)) \ar[shift left]{d}{\delta_{\shH *}}  \\
	\shH(\shA^{(1)},\cdots, \shA^{(n)}) \ar[shift left]{d}{\pi_*} \ar[hook]{r}{\gamma_{{}_\shH}} 	\ar[shift left]{u}{\delta_{\shH}^*}	& 		D(\shH)  \ar[shift left]{d}{\pi_*} \ar[shift left]{u}{\delta_{\shH}^*}	\\
	\shA^{(1)} \boxtimes \ldots \boxtimes \shA^{(n)}  \ar[shift left]{u}{\pi^*} \ar[hook]{r}{\gamma_{{}_\shA}} & D(X_1  \times \cdots \times X_n) \ar[shift left]{u}{\pi^*},
	\end{tikzcd}
	$$
which is commutative for all pushforward functors and respective for all pullback functors. Here by abuse of notations we denote the restrictions of $\pi_*$, $\pi^*$, $\delta_{\shH *}$, $\delta_{\shH}^*$ by the same notations; and $\gamma_{\shA}$ and $\gamma_{\shH}$ denote the natural inclusion functors as usual.

Denote by $\bigotimes_{k=1}^n \shA^{(k)} = \shA^{(1)} \boxtimes \cdots \boxtimes \shA^{(n)}$.
\begin{lemma} \label{lem:char:nHA} In the same situation as above, we have the following characterizations:
	 \begin{align*}
	\shH(\shA^{(1)},\ldots, \shA^{(n)})  &=  \{C \in D({\shH}) \mid \delta_{{\shH}*} \,C \in \bigotimes_{k=1}^n \shA^{(k)} \boxtimes D(\PP(V^\vee))\} \subset D({\shH}) \\
		&= \{C \in D({\shH}) \mid \pi_* (C \otimes h^* F) \in \bigotimes_{k=1}^n \shA^{(k)}, ~~\forall F \in D(\PP(V^\vee)) \} \subset D({\shH}) .
	 \end{align*}
\end{lemma}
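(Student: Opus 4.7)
My plan would be to prove the two equalities separately: the first, between the two displayed right-hand sides, by the projection formula, and the second, identifying either of them with the defining expression, by exploiting the closed embedding $Q_n \hookrightarrow \PP(V)^n \times \PP(V^\vee)$ together with compatibility of base-change under composition.

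First, I would verify that the two proposed characterizations define the same subcategory of $D(\shH)$. By the definition of exterior product as a base-change (Section~\ref{sec:bc}), an object $D \in D\bigl(\prod_k X_k \times \PP(V^\vee)\bigr)$ lies in $\bigotimes_{k=1}^n \shA^{(k)} \boxtimes D(\PP(V^\vee))$ if and only if $\pi_{X*}(D \otimes \pi_{\PP}^* G) \in \bigotimes_{k=1}^n \shA^{(k)}$ for every $G \in \Perf(\PP(V^\vee))$, where $\pi_X$ and $\pi_{\PP}$ denote the two projections out of $\prod_k X_k \times \PP(V^\vee)$. Substituting $D = \delta_{\shH *} C$ and using $\pi_X \circ \delta_{\shH} = \pi$, $\pi_{\PP} \circ \delta_{\shH} = h$, together with the projection formula along $\delta_{\shH}$, the expression $\pi_{X*}(\delta_{\shH *} C \otimes \pi_{\PP}^* G)$ simplifies to $\pi_*(C \otimes h^* G)$, matching the condition of the second characterization.

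Next, to identify either characterization with the defining expression, I would factor the map $Q_n \to \PP(V)^n$ through the closed embedding $\iota \colon Q_n \hookrightarrow \PP(V)^n \times \PP(V^\vee)$ followed by the projection to $\PP(V)^n$. By compatibility of base-change with composition (Lemma~2.7 of \cite{JL18Bl}) applied to $\bigotimes_{k=1}^n \shA^{(k)}$, one obtains
$$\shH(\shA^{(1)},\ldots, \shA^{(n)}) = \bigl(\bigotimes_{k=1}^n \shA^{(k)}\bigr)_{Q_n} = \bigl(\bigotimes_{k=1}^n \shA^{(k)} \boxtimes D(\PP(V^\vee))\bigr)_{Q_n},$$
where the rightmost base-change is taken along the closed embedding $\iota$. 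It then suffices to prove the general lemma: for any $S$-linear subcategory $\shA' \subset D(Y)$ with structure map $f \colon Y \to S$ and any closed embedding $\iota \colon T \hookrightarrow S$ with pullback $\delta \colon Y_T \hookrightarrow Y$, one has
$$\shA'_T = \{C \in D(Y_T) \mid \delta_* C \in \shA'\}.$$
The inclusion $\subset$ is immediate from the base-change definition of $\shA'_T$ by taking $F = \sO_T$. For the reverse inclusion, the collection of $F \in \Perf(T)$ satisfying $\delta_*(C \otimes a_T^* F) \in \shA'$ forms a thick subcategory of $\Perf(T)$, so it suffices to treat $F = \iota^* G$ for $G \in \Perf(S)$, for which the identity $a_T^* \iota^* = \delta^* f^*$ combined with the projection formula along $\delta$ yields $\delta_*(C \otimes \delta^* f^* G) = \delta_* C \otimes f^* G$, which lies in $\shA'$ by $S$-linearity.

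The main technical verification will be the thick generation of $\Perf(T)$ by $\iota^* \Perf(S)$ for a closed embedding of smooth schemes, which is a standard fact (following, for instance, from conservativity of $\iota_*$ on objects supported in $T$ combined with the $(\iota^*, \iota_*)$ adjunction and compact generation of $\Perf(S)$). Once this is available, the remainder of the proof reduces to routine applications of tor-independent base-change, the projection formula, and the definitions summarized in Section~\ref{sec:bc}.
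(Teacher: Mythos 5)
Your proposal is correct, and at bottom it runs on the same engine as the paper's proof --- generation of the derived category of $Q_n$ by pullbacks of ambient objects, combined with $\PP(V)$-linearity of the $\shA^{(k)}$ and the projection formula --- but it is organized around a different key lemma. The paper unwinds the base-change definition of $(\bigotimes_k \shA^{(k)})_{Q_n}$ directly and shrinks the family of test objects $G \in D(Q_n)$ using the iterated projective bundle structure of $Q_n \to \PP(V^\vee)$: $D(Q_n)$ is generated by $\delta_{Q_n}^*(E_1 \boxtimes \cdots \boxtimes E_n \boxtimes F) = \pi^*(E_1 \boxtimes \cdots \boxtimes E_n) \otimes h^* F$, and the $\PP(V)$-direction twists are absorbed into $\bigotimes_k \shA^{(k)}$ by linearity, which yields the $\pi_*(C \otimes h^*F)$ characterization directly. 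You instead insert the intermediate base change along the flat projection $\PP(V)^n \times \PP(V^\vee) \to \PP(V)^n$ (legitimate by the composition compatibility $(\shA_T)_{T'} = \shA_{T'}$ of \cite[Lem.~2.7]{JL18Bl}, which the paper itself records in \S\ref{sec:bc}), and then isolate a clean general statement: for a faithful closed embedding $T \hookrightarrow S$, the base-change category is cut out by the single condition $\delta_* C \in \shA'$, with no auxiliary twists. Your proof of that lemma is sound --- the thickness of the test collection, the identity $a_T^*\iota^* = \delta^* f^*$, and the projection formula are all used correctly, and the thick generation of $\Perf(Q_n)$ by restrictions from $\PP(V)^n \times \PP(V^\vee)$ holds here since restricted ample line bundles classically generate (your Neeman-style sketch via conservativity of $\iota_*$ plus compact generation is a valid standard argument). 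What your route buys is reusability: your general lemma is exactly the mechanism implicitly invoked whenever the paper characterizes subcategories by a pushforward condition (e.g.\ the definitions of $\shA^\hpd$ and of the $n$-HPD category in Def.~\ref{def:nHPD} and Rmk.~\ref{rmk:nHPD-char}), and you also make explicit the projection-formula equivalence of the two displayed characterizations, which the paper leaves implicit. What the paper's route buys is brevity and concreteness: no general thick-generation machinery, just Orlov's projective bundle decomposition applied to $Q_n \to \PP(V^\vee)$. The only point you should make explicit rather than deferring to ``routine tor-independence'' is that the base change along $\iota \colon Q_n \hookrightarrow \PP(V)^n \times \PP(V^\vee)$ is faithful for $\prod_k X_k \times \PP(V^\vee)$, i.e.\ that the fiber square over $\shH$ is Tor-independent; this is equivalent to the regularity of the section $\sigma$ cutting out $\shH$, which is part of the paper's standing setup in \S\ref{sec:nHPD}.
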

\begin{proof} By definition of base-change of categories, for $C \in D({\shH})$, $C \in \shH(\shA^{(1)},\ldots, \shA^{(n)})$ if and only if $\pi_{*}(C\otimes G) \in  \bigotimes_{k=1}^n \shA^{(k)}$ for all $G\in D(Q_n)$. However, as a projective bundle $Q_n \to \PP(V^\vee)$, $D(Q_n)$ is generated by elements $\delta_{Q_n}^* (E_1 \otimes \cdots \otimes E_n \otimes F)$, for $E_k \in \PP(V)$ and $F \in \PP(V^\vee)$. Then from $\delta_{Q_n}^*(E_1 \otimes \cdots \otimes E_n \otimes F) = \pi^* (E_1 \otimes \cdots E_n) \otimes h^* F$, and $\shA^{(k)}$'s are $\PP(V)$-linear, one obtains the desired characterization of $ \shH(\shA^{(1)},\ldots, \shA^{(n)})$.
\end{proof}

\begin{definition} \label{def:nHPD} Let $X_k \to \PP(V_k)$ be smooth varieties, $k=1,\cdots, n$ and $\shA^{(k)} \subset D({X_k})$ be $\PP(V)$-linear Lefschetz categories with Lefschetz center $\shA_0^{(k)}$. The {\em $n$-HPD category} $\shC$ for $\shA^{(1)},\ldots, \shA^{(n)}$ is the full $\PP(V^\vee)$-linear subcategory defined by
	$$\shC  =  \{C \in  \shH(\shA^{(1)},\ldots, \shA^{(n)})  \mid \delta_{{\shH}*} \,C \in \bigotimes_{k=1}^n \shA^{(k)}_0 \boxtimes D(\PP(V^\vee))\} \subset \shH(\shA^{(1)},\ldots, \shA^{(n)}).$$
\end{definition}

\begin{remark} \label{rmk:nHPD-char} From definition of base-change, the $n$-HPD category $\shC$ as a subcategory of $D(\shH) = D(\shH(X_1,\ldots, X_n))$ can also be characterized by:
 \begin{align*}
	 \shC &= 	\{C \in D({\shH}) \mid \pi_* (C \otimes h^* F) \in \bigotimes_{k=1}^n \shA^{(k)}_0 \text{~for all~} F \in D(\PP(V^\vee)) \}  \subset D({\shH}) \\
	    &=  \{C \in D({\shH}) \mid \delta_{{\shH}*} \,C \in \bigotimes_{k=1}^n \shA^{(k)}_0 \otimes D(\PP(V^\vee))\} \subset D({\shH}).
 \end{align*}
\end{remark}

 The following is the generalization of Lem. \ref{lem:sodH} to $n$-HPD category. To state the result we introduce following notations. For any subset $I \subsetneqq \{1,2,\cdots,n\}$, denote by $|I|$ its cardinality, and by $I^c$ its complement. Denote $\shH_I$ the $|I|$-universal hyperplane for $\{X_k\}_{k \in I}$, and $\shC_I$ the $|I|$-HPD category for $\shA^{(k)} \subset D({X_k})$, $k \in I$. Note for every $I$ the inclusion $\delta_{\shH}$ factors through an inclusion $\iota_I: \shH \hookrightarrow \shH_I  \times \prod_{k \in I^c} X_k \subset \prod_{k=1}^n X_k \times \PP(V)$. For $I = \emptyset$ the empty set, we will use the convention: $\iota_{\emptyset} = \delta_{\shH}: \shH \hookrightarrow  \prod_{k=1}^n X_k \times \PP(V)$ and $\shC_{\emptyset} \equiv D(\PP V^\vee)$. Notice that for $I = \{k\}$, then $\shC_I = \shA^{(k), \hpd}$, the (usual) HPD category of $\shA^{(k)}$.
 
 \begin{lemma}\label{lem:sod:nH} Let $\shA^{(k)} \subset D({X_k})$ be $\PP(V)$-linear Lefschetz categories, with Lefschetz center $\shA_0^{(k)}$ and Lefschetz components $\shA^{(k)}_{i_k}$, $i_k \in \ZZ$. Then for any $n \ge 1$, $I  \subsetneqq  \{1,\cdots,n\}$, the functors $\iota_I^*$ is fully faithful on the subcategories 
 	$$\shC_I \boxtimes  \bigotimes_{k \in I, i_k \ge 1} \shA_{i_k}^{(k)} (i_k H_k)  \subset \shH_I  \boxtimes \bigotimes_{k \in I^c} \shA^{(k)},$$
and the images induce a $\PP(V^\vee)$-linear semiorthogonal decomposition:
	$$ \shH(\shA^{(1)},\ldots, \shA^{(n)})   = \Big \langle \shC,  \big \langle  \iota_I^* \big(\shC_I \boxtimes  \bigotimes_{k \in I, i_k \ge 1} \shA_{i_k}^{(k)} (i_k H_k) \big)\big \rangle_{I  \subsetneqq  \{1,\cdots,n\}} \Big \rangle,$$
 where $\shC$ is the $n$-HPD category for $\{\shA^{(k)}\}_{k=1,\ldots,n}$, and $\shC_I$ the $|I|$-HPD category for $\{\shA^{(k)}\}_{k \in I}$. The order of the above semiorthogonal sequence is any order that is compatible with the (reversed) partial order of all subsets $\{I  \subseteq \{1,2, \cdots, n\}\}$, where we regard $ \shC_{ \{1,2, \cdots, n\}}=\shC $.
\end{lemma}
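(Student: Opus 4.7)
The plan is to proceed by induction on $n$, with the base case $n=1$ being exactly Lemma \ref{lem:sodH}. The structural observation driving the induction is the fiber-product identification
\[
\shH(X_1, \ldots, X_n) = \shH(X_1, \ldots, X_{n-1}) \times_{\PP(V^\vee)} \shH_{X_n},
\]
together with the analogous splitting $D(Q_n) \simeq D(Q_{n-1}) \boxtimes_{\PP(V^\vee)} D(Q)$ of the universal quadrics. Combining these via the associativity (Lemma \ref{lem:ass:tensor}) and compatibility (Lemma \ref{lem:tensor:bc}) of the tensor product of linear categories, I upgrade the decomposition to the categorical identification
\[
\shH(\shA^{(1)}, \ldots, \shA^{(n)}) = \shH(\shA^{(1)}, \ldots, \shA^{(n-1)}) \boxtimes_{\PP(V^\vee)} \shH_{\shA^{(n)}}.
\]

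For the inductive step I first apply Lemma \ref{lem:sodH} to the right factor, obtaining a $\PP(V^\vee)$-linear semiorthogonal decomposition of $\shH_{\shA^{(n)}}$ with leading piece $\shA^{(n),\hpd}$ and remaining Lefschetz-shift pieces $\delta^*(\shA_{i_n}^{(n)}(i_n H_n) \boxtimes D(\PP(V^\vee)))$ for $1 \le i_n \le m_n-1$. Tensoring this decomposition over $\PP(V^\vee)$ with the left factor via Proposition \ref{prop:product} produces an intermediate semiorthogonal decomposition of $\shH(\shA^{(1)}, \ldots, \shA^{(n)})$. I then insert the inductive hypothesis for $\shH(\shA^{(1)}, \ldots, \shA^{(n-1)})$ into each piece and re-index the resulting doubly-indexed decomposition by subsets $I \subseteq \{1, \ldots, n\}$: pieces tensored with $\shA^{(n),\hpd}$ correspond to $I = I' \cup \{n\}$ with $n \in I$ and no Lefschetz shift attached to $n$, while pieces tensored with a Lefschetz shift $\shA_{i_n}^{(n)}(i_n H_n)$ correspond to $I = I'$ with $n \in I^c$ and attached index $i_n \ge 1$.

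To match the claimed form it remains to verify the two recursive identifications: (i) $\shC_J \boxtimes_{\PP(V^\vee)} \shA^{(n),\hpd} = \shC_{J \cup \{n\}}$ for every $J \subseteq \{1, \ldots, n-1\}$, and (ii) the compatibility of the iterated pullbacks $\iota_{I'}^*(-) \boxtimes_{\PP(V^\vee)} \delta^*(-) = \iota_I^*(-)$ under the fiber-product structure on hyperplanes. Both are diagram-chase verifications using the characterization of $\shC_{J \cup \{n\}}$ in Remark \ref{rmk:nHPD-char} via the pushforward condition $\delta_* C \in \bigotimes_{k \in J \cup \{n\}} \shA_0^{(k)} \boxtimes D(\PP(V^\vee))$, together with the base-change definition of $\boxtimes_{\PP(V^\vee)}$ recalled in \S \ref{sec:bc}.

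The main obstacle I expect is identification (i): the intersection of the two base-change categories $(\shC_J)_{\shH_{X_n}}$ and $(\shA^{(n),\hpd})_{\shH_J}$ inside $D(\shH_{J \cup \{n\}})$ must be shown to coincide with the intrinsic Lefschetz-center condition characterizing $\shC_{J \cup \{n\}}$. The inclusion $\supseteq$ is immediate from functoriality of base-change, whereas the reverse inclusion uses that $\delta_{\shH_{J \cup \{n\}}}$ factors through both projections to $\shH_J$ and $\shH_{X_n}$, and that combining the two base-change pushforward conditions forces $\delta_* C$ to lie in $\bigotimes_{k \in J \cup \{n\}} \shA_0^{(k)} \boxtimes D(\PP(V^\vee))$ once $\PP(V^\vee)$-linearity and faithfulness of the projections are invoked. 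The claimed ordering of the semiorthogonal sequence, matching the reversed partial order of subsets of $\{1, \ldots, n\}$, is then preserved automatically by the tensor product over $\PP(V^\vee)$.
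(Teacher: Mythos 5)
Your proposal is correct in outline, but it takes a genuinely different route from the paper's own proof in Appendix~\S\ref{app:proof:sod:nH}. The paper reduces to $n=2$ (with the general case by ``the same argument and induction'') and proceeds by \emph{direct computation}: full faithfulness of $\iota_1^*,\iota_2^*$ from the explicit cone $\cone(\Id \to \iota_{k*}\iota_k^*) = \otimes\, \sO(-H_k-H')[1]$, full faithfulness of $\delta_{\shH}^*$ from the Koszul resolution of $\sO_{\shH}$, semiorthogonality of the blocks (I), (II), (III) by explicit $\Hom$-vanishing using $\delta_{k*}(\shA^{(k),\hpd}) \subset \shA_0^{(k)} \boxtimes D(\PP(V^\vee))$, and finally the identification of the residual component with $\shC$ by a two-sided pushforward argument invoking the $n=1$ decomposition of $\shH_{\shA^{(1)}}$. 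You instead induct via $\shH(\shA^{(1)},\ldots,\shA^{(n)}) = \shH(\shA^{(1)},\ldots,\shA^{(n-1)}) \boxtimes_{\PP(V^\vee)} \shH_{\shA^{(n)}}$ and Proposition~\ref{prop:product}; this is legitimate (the Tor-independence and finite-amplitude hypotheses hold here, exactly as in the paper's own use of these tools), and it is in fact the mechanism of Step~2 of the proof of Theorem~\ref{thm:main} run in the \emph{opposite} direction. That reversal is the one point demanding care: your identification (i), $\shC_J \boxtimes_{\PP(V^\vee)} \shA^{(n),\hpd} = \shC_{J\cup\{n\}}$, is for $n=2$ precisely the equivalence $\shA^{(1),\hpd} \boxtimes_{\PP(V^\vee)} \shA^{(2),\hpd} \simeq \shC$ that the paper \emph{deduces from} this lemma by matching complements of two decompositions, so quoting Theorem~\ref{thm:main} here would be circular; you rightly avoid this by proposing a direct proof from the pushforward characterizations (Remark~\ref{rmk:nHPD-char}), and your sketch does complete --- one direction because membership in both base-change categories yields $\iota_{k*}C \in \shA^{(k),\hpd} \boxtimes D(\prod_{j\neq k} X_j)$ and hence places $\delta_{\shH*}C$ in the intersection of the two one-sided conditions, which equals $\bigotimes_k \shA_0^{(k)} \boxtimes D(\PP(V^\vee))$ by the very definition of exterior product, the other because $\Perf(\shH_{X_k})$ is generated by restrictions from $X_k \times \PP(V^\vee)$, so the base-change test objects may be taken pulled back from the ambient product and the projection formula reduces everything to the hypothesis on $\delta_{\shH*}C$. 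Be aware that this verification is where the real content sits --- it is comparable in weight to the paper's explicit $\Hom$ computations, not a formality. What each approach buys: the paper's argument is self-contained and makes the full faithfulness and all orthogonalities explicit; yours packages those into Proposition~\ref{prop:product} and Lemma~\ref{lem:sodH}, and as a by-product establishes $\shC \simeq \shA^{(1),\hpd} \boxtimes_{\PP(V^\vee)} \cdots \boxtimes_{\PP(V^\vee)} \shA^{(n),\hpd}$ directly, rendering Step~2 of Theorem~\ref{thm:main} immediate.
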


The proof is straight forward and will be given in appendix \S \ref{app:proof:sod:nH}.

\subsection{HPD with base-locus} \label{sec:app:HPDbs} 
	\subsubsection{Refined blow-up along base-locus} \label{sec:HPDbs:ref-Bl} Let $X$ be a smooth scheme with $X \to \PP(V)$, $L \subset V^\vee$ be a linear subbundle of rank $\ell$ over $B$ and $L^\perp = \Ker (V \to L^\vee) \subset V$ be the orthogonal bundle. Assume the base-locus $X_{L^\perp} = X \times_{\PP(V)} \PP(L^\perp)$ (of the linear system $L$) is of expected dimension $\dim X - \ell$. Denote $\widetilde{X} = \Bl_{X_{L^\perp}} X$ the blowing up of $X$ along $X_{L^\perp}$. By construction $\widetilde{\PP(V)}\subset S\times \PP(L^\vee)$ admits a map to $\PP(L^\vee)$ by projection. 

Let $\shA \subset D(X)$ be a $S=\PP(V)$-linear Lefschetz subcategory with Lefschetz center $\shA_0$ and length $m$. Apply the construction of \S \ref{sec:bl} to $\shA$, consider the blowing up category 
	$$\widetilde{\shA} := \shA \boxtimes_{\PP(V)} D(\widetilde{\PP(V)}) \subset D(\widetilde{X})$$
of $\shA$ along $\shA_{\PP(L^\perp)}$, where 
	$\shA_{\PP(L^\perp)} = \shA \boxtimes_{\PP(V)} D(\PP(L^\perp))$. The blowing-up category $\widetilde{\shA}$ is equipped with a $\PP(L^\vee)$-linear structure from the projection $\widetilde{\PP(V)} \to \PP(L^\vee)$. From Thm. \ref{thm:bl} there is a fully faithful functor $\beta^* \colon \shA \to \widetilde{\shA}$.

\begin{lemma}[{\cite[Lem. 4.1]{JL18Bl}}] \label{lem:app:ref-bl} The action functor $\act \colon \widetilde{\shA} \boxtimes D(\PP(L^\vee)) \to \widetilde{\shA}$ (for the $\PP(L^\vee)$-linear structure of the category $\widetilde{\shA}$) is fully faithful on the subcategories of $\widetilde{\shA} \boxtimes D(\PP(L^\vee))$:
	$$\beta^*(\shA_{\ell-1})\boxtimes D(\PP(L^\vee)), \beta^*(\shA_{\ell}(1))\boxtimes D(\PP(L^\vee)) \ldots, \beta^*(\shA_{m-1}(m-\ell))\boxtimes D(\PP(L^\vee)),$$
and their images remain a semiorthogonal sequence in $\widetilde{\shA}$. 
\end{lemma}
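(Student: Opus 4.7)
The plan is to use Orlov's projective bundle formula (Theorem~\ref{thm:app:pr_bd}) applied to the natural projection $p_L \colon \widetilde{\PP(V)} \to \PP(L^\vee)$, combined with the Lefschetz structure of $\shA$. First observe that $\widetilde{\PP(V)}$ is canonically the $\PP^{N-\ell}$-bundle $\PP_{\PP(L^\vee)}(\widetilde{V})$, where $N = \rank V$ and $\widetilde{V}$ is the rank $N-\ell+1$ bundle on $\PP(L^\vee)$ sitting in $0 \to L^\perp \otimes \sO_{\PP(L^\vee)} \to \widetilde{V} \to \sO_{\PP(L^\vee)}(-1) \to 0$; under this identification the relative tautological class $\sO_{p_L}(1)$ agrees with $b^*\sO_{\PP(V)}(H)$, where $b \colon \widetilde{\PP(V)} \to \PP(V)$ is the blow-up contraction. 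The expected-codimension hypothesis on $X_{L^\perp}$ yields Tor-independence of the fiber square $\widetilde{X} = X \times_{\PP(V)} \widetilde{\PP(V)}$, so that $\widetilde{\shA} = \shA \boxtimes_{\PP(V)} D(\widetilde{\PP(V)})$ as base-change categories, and $\pi = p_L \circ \widetilde{p}$ with $\widetilde{p} \colon \widetilde{X} \to \widetilde{\PP(V)}$ the natural projection.

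The key computational identity is
\[
\act\big(\beta^*\alpha \otimes \sO(iH) \boxtimes F\big) = \alpha \boxtimes_{\PP(V)} \big(p_L^*F \otimes \sO(iH)\big)
\]
for $\alpha \in \shA$ and $F \in D(\PP(L^\vee))$, which places the $\act$-image of $\beta^*(\shA_{\ell-1+i}(i)) \boxtimes D(\PP(L^\vee))$ inside the subcategory $\shA_{\ell-1+i} \boxtimes_{\PP(V)} \bigl(p_L^* D(\PP(L^\vee)) \otimes \sO(iH)\bigr)$. Fully faithfulness of $\act$ on each such subcategory then follows by combining: (i) the fully faithfulness of $p_L^*$ as part of Orlov's decomposition $D(\widetilde{\PP(V)}) = \langle p_L^* D(\PP(L^\vee)) \otimes \sO(iH)\rangle_{i=0,\ldots,N-\ell}$; (ii) the K\"unneth-type equivalence $\shA_{\ell-1+i} \boxtimes_B D(\PP(L^\vee)) \simeq \shA_{\ell-1+i} \boxtimes_{\PP(V)} p_L^* D(\PP(L^\vee))$, obtained from the iterated base-change compatibility of Lemma~\ref{lem:tensor:bc} applied to the two towers $X \to \PP(V) \to B$ and $\widetilde{\PP(V)} \to \PP(L^\vee) \to B$; and (iii) the fact that $\otimes \sO(iH)$ is an autoequivalence of $\widetilde{\shA}$. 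Semiorthogonality of the collection for $i = 0, 1, \ldots, m-\ell$ is inherited from the semiorthogonality of the Orlov pieces $p_L^* D(\PP(L^\vee)) \otimes \sO(iH)$ in $D(\widetilde{\PP(V)})$ after pairing with $\shA_{\ell-1+i}$ via the identity above; the range restriction $i \le m-\ell$ simply reflects that $\shA_{\ell - 1 + i}$ is nontrivial only when $\ell - 1 + i \le m-1$.

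The main technical obstacle is step (ii), the K\"unneth-type equivalence between the two exterior products. The subtlety is that $p_L^* D(\PP(L^\vee)) \subset D(\widetilde{\PP(V)})$ inherits a $\PP(V)$-linear structure via restriction of scalars through $b$, whereas its intrinsic structure is $\PP(L^\vee)$-linear (hence $B$-linear via $\PP(L^\vee) \to B$). The resolution proceeds by identifying $p_L^* D(\PP(L^\vee))$ with the base-change of $D(\PP(L^\vee))$ along the composition $\widetilde{\PP(V)} \to \PP(L^\vee) \to B$, after which the equivalence becomes a formal consequence of Lemmas~\ref{lem:ass:tensor} and~\ref{lem:tensor:bc} applied in the evident tower.
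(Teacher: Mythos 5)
Your geometric setup is correct and is the right starting point: $\widetilde{\PP(V)} = \PP_{\PP(L^\vee)}(\widetilde V)$ with $\sO_{p_L}(1) \simeq b^*\sO_{\PP(V)}(H)$, Tor-independence of $\widetilde X = X \times_{\PP(V)} \widetilde{\PP(V)}$, and the identity $\act(\beta^*\alpha \otimes \sO(iH) \boxtimes F) = \beta^*\alpha \otimes \sO(iH) \otimes \pi^* F$. But the reduction to formal base-change lemmas does not work, and the gap is concentrated in your step (ii) together with the closing claim that semiorthogonality is ``inherited'' from Orlov's decomposition. First, $p_L^* D(\PP(L^\vee)) \otimes \sO(iH)$ is \emph{not} a $\PP(V)$-linear subcategory of $D(\widetilde{\PP(V)})$: tensoring by $b^*\sO_{\PP(V)}(1) = \sO_{p_L}(1)$ moves each Orlov piece into the next, so the exterior product $\shA_{\ell-1+i} \boxtimes_{\PP(V)} \bigl(p_L^* D(\PP(L^\vee)) \otimes \sO(iH)\bigr)$ is not defined in the framework of \S\ref{sec:bc}; and your proposed repair fails, because the base-change category of $D(\PP(L^\vee))$ along $\widetilde{\PP(V)} \to \PP(L^\vee)$ is \emph{all} of $D(\widetilde{\PP(V)})$ (the relevant fiber product is $\widetilde{\PP(V)}$ itself), not the single component $p_L^* D(\PP(L^\vee))$.

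Second, and more fundamentally, neither full faithfulness nor semiorthogonality transfers formally from the $\widetilde{\PP(V)}$-factor: morphisms between $\act$-images are governed by the relative Hom over $\PP(V)$, i.e.\ by $b_* R\sHom(-,-) \in D(\PP(V))$ acting on $\shA$, not by global Homs on $\widetilde{\PP(V)}$, and for $j > i$ one has $b_*\bigl(p_L^*(F^\vee \otimes F') \otimes \sO((i-j)H)\bigr) \neq 0$ (e.g.\ $b_*\sO((i-j)H) = \sO_{\PP(V)}(i-j)$). Concretely, reducing to generators $F = \sO_{\PP(L^\vee)}(t)$ and using $\pi^*\sO_{\PP(L^\vee)}(1) \simeq \sO(H-E)$, the Homs in question become $\Hom_X\bigl(a,\, a' \otimes \sO(sH) \otimes \beta_*\sO(-sE)\bigr)$, whose vanishing requires computing $\beta_*\sO(\pm kE)$ (powers of the ideal sheaf of $X_{L^\perp}$, resolved by Koszul-type complexes carrying twists down to $\sO(-(\ell-1)H)$) and pairing these against the Lefschetz semiorthogonality of $\shA$; the offset $\ell-1$ in the indices is exactly calibrated to absorb those twists, and your argument never uses it. An index-insensitive argument would equally ``prove'' the statement for $\beta^*(\shA_0), \beta^*(\shA_1(1)), \ldots$, which is false: already full faithfulness fails below the threshold, since for $a, a' \in \shA_0$ one computes, using $\beta_*\sO(E) = \sO_X$,
\begin{equation*}
\Hom\bigl(\act(\beta^*a \boxtimes \sO_{\PP(L^\vee)}(1)),\, \act(\beta^*a' \boxtimes \sO)\bigr) = \Hom_X(a, a'(-H)),
\end{equation*}
which is generally nonzero, while the K\"unneth answer vanishes because $H^\bullet(\PP(L^\vee), \sO(-1)) = 0$ for $\ell \ge 2$. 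This failure is precisely why the refined blow-up $\rBlA$ of Prop.~\ref{prop:app:lef:ref-bl} must replace the low components by $\shA_k' = \shA_{\ell-1}^\perp \cap \shA_k$ and adjoin $\shC_L$. Note finally that the present paper does not reprove this lemma but cites \cite[Lem.~4.1]{JL18Bl}, where the proof proceeds by exactly such pushforward computations combined with the Lefschetz vanishing — the interplay your proposal replaces by formality.
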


The right orthogonal of the images of above lemma inside $\widetilde{\shA}$ is called {\em refined blow-up category} of $\shA$ in {\cite{JL18Bl}}, and denoted by $\rBlA$.

\begin{proposition}[{\cite[Prop. 4.4]{JL18Bl}}] \label{prop:app:lef:ref-bl}  $\rBlA$ admits a (moderate) $\PP(L^\vee)$-linear Lefschetz structure, with Lefschetz components $\rBlA_k  :=\langle \beta^* \shA_k', (\shC_L)_0 \rangle$, for $0 \le k \le \ell-2$, 
where $\shA_k': = \shA_{\ell-1}^\perp \cap \shA_k \subset \shA_0,$
and $\shC_L$ is the essential component of $\shA_{L^\perp}$ defined by
	\begin{align*} 
	\shC_L &= \big\langle i_L^* \shA_{\ell}(1), \ldots, i_L^* \shA_{m-1}(m-\ell) \big \rangle^\perp \subset \shA_{L^\perp} \\
	& = \{ C \in \shA_{L^\perp} \mid i_{L*} C \in \langle \shA_0(1-\ell), \shA_{1}(2-\ell) \ldots, \shA_{\ell-1} \rangle \subset \shA\},
	\end{align*}
where $i_L \colon X_{L^\perp} \to X$ is the inclusion, and $(\shC_L)_0$ denotes  $j_*\, p^*(\shC_L)$ as in Thm. \ref{thm:bl}.
\end{proposition}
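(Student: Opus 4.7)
My approach would be to compute $\rBlA$ explicitly inside $\widetilde{\shA}$ by combining Orlov's blow-up formula with the Lefschetz structure of $\shA$, and then verify the Lefschetz decomposition directly on the resulting subcategory.

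First I would combine Thm.~\ref{thm:bl} with the right Lefschetz decomposition $\shA = \langle \shA_0, \shA_1(H), \ldots, \shA_{m-1}((m-1)H)\rangle$ to expand
\begin{align*}
\widetilde{\shA} = \big\langle \beta^*\shA_0,\, \beta^*\shA_1(H),\, \ldots,\, \beta^*\shA_{m-1}((m-1)H),\, (\shA_{\PP(L^\perp)})_0,\, \ldots,\, (\shA_{\PP(L^\perp)})_{\ell-2}\big\rangle,
\end{align*}
where $H$ denotes the pullback of $\sO_{\PP(V)}(1)$. On $\widetilde{X}$ there is also the new hyperplane class $H' = c_1(\sO_{\PP(L^\vee)}(1))$ from the $\PP(L^\vee)$-linear structure, related by $H = H' + E$ where $E$ is the exceptional divisor of $\beta$. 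Thus twisting by $\sO(kH')$ mixes $\beta^*$-pieces with exceptional divisor contributions.

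Next I would unpack the subcategories in Lem.~\ref{lem:app:ref-bl}: applying $\act$ to $\beta^*\shA_{\ell-1+j}((\ell-1+j)H) \boxtimes D(\PP(L^\vee))$ produces the twists $\beta^*\shA_{\ell-1+j}((\ell-1+j)H + kH')$ for $k = 0, \ldots, \ell-1$, and using $kH' = kH - kE$ together with the projection formula these can be rewritten in terms of the base decomposition above. Taking the right orthogonal in $\widetilde{\shA}$ should leave exactly two types of pieces: from the $\beta^*\shA$-sector, the pieces orthogonal to $\shA_{\ell-1}$ inside each $\shA_k$ survive, yielding $\beta^* \shA_k' = \beta^*(\shA_{\ell-1}^\perp \cap \shA_k)$ for $0 \le k \le \ell-2$; from the exceptional divisor sector, the only residual contribution is the ``essential component'' $(\shC_L)_0 = j_* p^* \shC_L$, with $\shC_L$ characterized precisely by $i_{L*}\shC_L \subset \langle \shA_0(1-\ell), \ldots, \shA_{\ell-1}\rangle$, i.e.\ those exceptional divisor classes whose pushforward to $X$ already lies in the $\shA_{\ell-1}$-generated piece and is therefore not captured by $\beta^*$.

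To verify the Lefschetz structure of $\rBlA$ with center $\rBlA_0 = \langle \beta^*\shA_0', (\shC_L)_0 \rangle$ and components $\rBlA_k = \langle \beta^*\shA_k', (\shC_L)_0 \rangle$, I would check: the nesting $\rBlA_0 \supset \rBlA_1 \supset \cdots \supset \rBlA_{\ell-2}$ follows from $\shA_0' \supset \shA_1' \supset \cdots \supset \shA_{\ell-2}'$ sharing a common $(\shC_L)_0$; semiorthogonality of the twists $\rBlA_k(kH')$ in $\rBlA$ follows by embedding back into the ambient $\widetilde{\shA}$-decomposition and using the Lem.~\ref{lem:app:ref-bl} sequence; the dual left Lefschetz decomposition is obtained symmetrically from the dual Orlov formula; and moderation $\ell-1 < \rank L = \ell$ is automatic.

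The main obstacle is the reorganization step: one must carefully control the mutations between the $\beta^*\shA$-sector and the $(\shA_{\PP(L^\perp)})_k$-sector induced by twisting by $H' = H - E$, and verify that the Lem.~\ref{lem:app:ref-bl} subcategories together with the claimed $\rBlA_k(kH')$ really exhaust $\widetilde{\shA}$. The identification of $\shC_L$ via the vanishing condition on $i_{L*}$ is essentially the precise bookkeeping of which exceptional divisor contributions are already visible after mutation into the $\beta^*\shA$-piece and which are not, and making this explicit will likely require the spectral sequence / Koszul-type analysis from the proof of Thm.~\ref{thm:bl}.
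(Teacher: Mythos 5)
You should note at the outset that this paper never proves Prop.~\ref{prop:app:lef:ref-bl}: it is imported from \cite[Prop.~4.4]{JL18Bl}, so there is no internal proof to compare against. That said, the machinery the paper does set up (Thm.~\ref{thm:bl}, Lem.~\ref{lem:app:ref-bl}) is exactly the framework the cited proof runs on, and your overall plan --- expand $\widetilde{\shA}$ by Orlov's blow-up formula together with the Lefschetz decomposition of $\shA$, exploit $H = H' + E$, and compute the right orthogonal of the Lem.~\ref{lem:app:ref-bl} pieces --- is the natural route and consistent in spirit with that reference.

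There are, however, two concrete problems. First, you misquote Lem.~\ref{lem:app:ref-bl}: its semiorthogonal pieces are $\beta^*(\shA_{\ell-1+j}(jH))\boxtimes D(\PP(L^\vee))$ for $0 \le j \le m-\ell$ (twists $0,1,\ldots,m-\ell$), not $\beta^*(\shA_{\ell-1+j}((\ell-1+j)H))\boxtimes D(\PP(L^\vee))$. Since the $D(\PP(L^\vee))$-action only supplies $H'$-twists, your version of the ambient subcategory differs from the correct one by the overall twist $\otimes\,\sO((\ell-1)H)$; its right orthogonal is then $\rBlA \otimes \sO((\ell-1)H)$ rather than $\rBlA$, and the components would come out as $(\ell-1)H$-twists of the stated $\langle \beta^*\shA_k', (\shC_L)_0\rangle$, so all subsequent bookkeeping must be renormalized before the proposition's formulas can emerge. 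Second, and more seriously, the entire content of the proposition sits in the step you defer as ``the main obstacle'': (i) semiorthogonality of the sequence $\rBlA_0, \rBlA_1(H'), \ldots, \rBlA_{\ell-2}((\ell-2)H')$, including the mixed vanishings between $\beta^*$-pieces and $j_*\,p^*$-pieces in both directions (which also underlie the well-definedness of each pair $\langle \beta^*\shA_k', (\shC_L)_0\rangle$); (ii) the generation statement, i.e.\ that these twisted components together with the Lem.~\ref{lem:app:ref-bl} images exhaust $\widetilde{\shA}$; and (iii) the explicit $\Hom$-computations these require, via adjunction for $j_*\,p^*$, the Koszul triangle for the divisor $E$, the relation $\sO_E(E) \simeq \sO_{\PP(N_i)}(-1)$, and pushforward along the $\PP^{\ell-1}$-bundle $p$. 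None of this is carried out, so what you have is a correct outline, not a proof. One point where you overestimate the difficulty: the equality of the two descriptions of $\shC_L$ is not delicate Koszul bookkeeping but a one-line adjunction --- twist the right Lefschetz decomposition to $\shA = \langle \shA_0(1-\ell), \ldots, \shA_{\ell-1}, \shA_{\ell}(1), \ldots, \shA_{m-1}(m-\ell)\rangle$ and apply $\Hom(i_L^*(-), C) = \Hom(-, i_{L*}C)$.
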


\subsubsection{Generalized universal hyperplane and HPD} \label{sec:HPDbs:hyp} Next we show that the HPD of $\rBlA$ over $\PP(L^\vee)$ is simply given by the linear section $(\shA^\hpd)|_{\PP(L)}$ of $\shA^\hpd$, which can also be intrinsically defined as the nontrivial component of the generalized universal hyperplane of $\shA$ over $\PP(L)$. More precisely, although $X \dasharrow \PP(L^\vee)$ is only a rational map, we are actually in the situation of \S \ref{sec:hyp}, where $S = \PP(V)$, $Z = \PP(L^\perp)$, $E = L^\vee \otimes \sO_{\PP(V)}(1)$, $i \colon \PP(L^\perp) \hookrightarrow \PP(V)$, and the section $s_L$ of $E$ is the canonical section which corresponds to the inclusion $L\subset V^\vee$ under the identification:
 	$$s_L \in \Gamma(\PP(V), L^\vee \otimes \sO_{\PP(V)}(1)) = \Hom_B(L, V^\vee).$$
Therefore we can form the {\em generalized universal hyperplane} for $X \to \PP(V) \dasharrow \PP(L^\vee)$:
	$$\shH_{X,L} := \shH_{X, s_L}  \hookrightarrow \PP_X(E) = X \times \PP(L),$$
where $\delta_{\shH_{L}} \colon \shH_{X,L} \hookrightarrow X \times \PP(L)$ is an inclusion of the divisor $\sO(1,1):=\sO_X(1) \boxtimes \sO_{\PP(L)}(1)$.

Let $\shA \subset D(X)$ be a $\PP(V)$-linear Lefschetz category of length $m$, with Lefschetz center $\shA_0$ and components $\shA_k$'s. Then by construction of \S \ref{sec:hyp} we obtain the {\em generalized universal hyperplane} $\shH_{\shA,L} \subset D(\shH_{X,L})$ for $\shA$, with induced adjoint functors:
		$$ \delta_{\shH_L}^* \colon \shA \boxtimes D(\PP(L)) \rightleftarrows \shH_{\shA,L}  \colon \delta_{\shH_L \, *}.$$

\begin{theorem}[{\cite[Thm. 4.5]{JL18Bl}}] \label{thm:app:HPDbs} Consider the full $\PP(L)$-linear subcategory of the generalized universal hyperplane $\shH_{\shA,L}$ defined by:
	$$\shD_L := \{C \in \shH_{\shA,L}  \mid \delta_{\shH_L \,*} \,C \in \shA_0 \boxtimes D(\PP(L))\} \subset \shH_{\shA,L}.$$
Then $\shD_L$ is the HPD category of the Lefschetz category $\rBlA$ over $\PP(L^\vee)$: $\shD_L \simeq (\rBlA)^\hpd_{/\PP(L^\vee)}$. Furthermore, $\shD_L$ is naturally $\PP(L)$-linear equivalent to $(\shA^\hpd)|_{\PP(L)}$, the base-change category of the HPD category $\shA^\hpd = (\shA)^\hpd_{/\PP(V)}$ along inclusion $\PP(L) \subset \PP(V^\vee)$.
\end{theorem}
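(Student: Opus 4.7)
The plan is to establish the two claimed equivalences separately: first the formal base-change identification $\shD_L \simeq (\shA^\hpd)|_{\PP(L)}$, and then the nontrivial identification $\shD_L \simeq (\rBlA)^\hpd_{/\PP(L^\vee)}$ via a geometric comparison and the Lefschetz-center criterion of Lemma~\ref{lem:equiv-lef}.

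\textbf{The second equivalence via base-change.} The key observation is that $\shH_{X,L} \subset X \times \PP(L)$ coincides with the Tor-independent fiber product $\shH_X \times_{\PP(V^\vee)} \PP(L)$: a point $(x,[H]) \in X \times \PP(V^\vee)$ lies in $\shH_X$ exactly when $x \in H$, and restricting $[H]$ to $\PP(L) \subset \PP(V^\vee)$ cuts out precisely the universal hyperplane for the subfamily of hyperplanes parameterized by $L$. Base-changing the $\PP(V)$-linear category $\shH_\shA$ along $\PP(L) \hookrightarrow \PP(V^\vee)$ yields $\shH_{\shA,L} = (\shH_\shA)|_{\PP(L)}$, and the defining condition $\delta_{\shH\,*}\,C \in \shA_0 \boxtimes D(\PP(V^\vee))$ of $\shA^\hpd$ base-changes to precisely the defining condition $\delta_{\shH_L\,*}\,C \in \shA_0 \boxtimes D(\PP(L))$ of $\shD_L$, establishing $(\shA^\hpd)|_{\PP(L)} \simeq \shD_L$.

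\textbf{The first equivalence via Lefschetz comparison.} For the identification with $(\rBlA)^\hpd_{/\PP(L^\vee)}$, I would produce a $\PP(L^\vee)$-linear functor and apply Lemma~\ref{lem:equiv-lef}. Although $X \dashrightarrow \PP(L^\vee)$ is only rational, it is resolved by the blow-up $\beta \colon \widetilde X \to X$, so the ordinary universal hyperplane $\shH_{\widetilde X\,/\,\PP(L^\vee)} = \widetilde X \times_{\PP(L^\vee)} Q_L \subset \widetilde X \times \PP(L)$ is a well-defined smooth variety. Since $\beta^* s_L$ vanishes along both the exceptional divisor of $\beta$ and along the strict transform of $\shH_{X,L}$, the morphism $\beta \times \id \colon \widetilde X \times \PP(L) \to X \times \PP(L)$ restricts to a proper birational morphism $\tilde\beta \colon \shH_{\widetilde X\,/\,\PP(L^\vee)} \to \shH_{X,L}$ which is an isomorphism over $X \setminus X_{L^\perp}$. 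Pushforward along $\tilde\beta$, followed by projection to the appropriate subcategory, provides the candidate comparison functor between $(\rBlA)^\hpd_{/\PP(L^\vee)}$ and $\shD_L$.

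\textbf{Main obstacle: matching Lefschetz centers.} By Lemma~\ref{lem:hpdcat}, the Lefschetz center of $(\rBlA)^\hpd_{/\PP(L^\vee)}$ is equivalent, via pushforward along $\shH_{\widetilde X\,/\,\PP(L^\vee)} \to \widetilde X$, to the center $(\rBlA)_0 = \langle \beta^*\shA_0', (\shC_L)_0 \rangle$ furnished by Proposition~\ref{prop:app:lef:ref-bl}. The core technical step is to exhibit a $\PP(L^\vee)$-linear Lefschetz structure on $\shD_L$ whose center matches this. This will require applying Orlov's blow-up formula (Theorem~\ref{thm:bl}) to $\tilde\beta$ and Orlov's generalized hyperplane formula (Theorem~\ref{thm:hyp}) to the projection $\shH_{X,L} \to X$, and then executing the same mutations used in Lemma~\ref{lem:app:ref-bl} and Proposition~\ref{prop:app:lef:ref-bl} to extract the refined component of the blow-up. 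The most delicate part will be tracking the exceptional-divisor contribution through $\tilde\beta$ and verifying that the appearance of the essential component $\shC_L \subset \shA_{L^\perp}$ and of the orthogonal subcategory $\shA_{\ell-1}^\perp \cap \shA_0$ on the $\rBlA$ side is mirrored on the $\shD_L$ side by semiorthogonal complements coming from the higher Lefschetz components $\shA_\ell(\ell H), \ldots, \shA_{m-1}((m-1)H)$ that are removed by the base-change of Lemma~\ref{lem:sodH} along $\PP(L) \hookrightarrow \PP(V^\vee)$.
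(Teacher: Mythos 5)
First, a caveat on the comparison: this paper does not prove Theorem~\ref{thm:app:HPDbs} at all --- it is imported verbatim from \cite[Thm.~4.5]{JL18Bl} --- so your sketch can only be measured against the strategy of the cited source and against the analogous arguments this paper does carry out (the proofs of Thm.~\ref{thm:join-linear} and Thm.~\ref{thm:cone}, which follow the same template). With that understood, your base-change half is correct and essentially complete, and it is the standard route: the expected-dimension hypothesis on $X_{L^\perp}$ makes $\tilde{s}_L$ a regular section, so $\shH_{X,L} = \shH_X \times_{\PP(V^\vee)} \PP(L)$ is a Tor-independent fiber product; $\shH_{\shA,L} = (\shH_\shA)|_{\PP(L)}$ then follows from compatibility of base change; and the pushforward characterization of $\shA^\hpd$ does restrict to that of $\shD_L$ --- though the nontrivial implication (membership in $\shA_0 \boxtimes D(\PP(L))$ from membership after pushforward along $\id \times \iota$) deserves a line, using that $D(\PP(L))$ is generated by the restrictions $\sO(k)|_{\PP(L)}$ together with the projection formula.

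The gap is in the other half, and it is concrete: your comparison functor is never actually constructed, and the obvious candidate fails. For $C \in (\rBlA)^\hpd_{/\PP(L^\vee)}$ one has $\delta_{\shH_L *}\,\tilde\beta_* C = (\beta \times \id)_*\, \delta_{\widetilde{\shH}*} C$, where $\delta_{\widetilde{\shH}} \colon \shH_{\widetilde X /\PP(L^\vee)} \hookrightarrow \widetilde X \times \PP(L)$ is the inclusion; by the definition of HPD this lies in $\beta_*\big((\rBlA)_0\big) \boxtimes D(\PP(L))$, and since $\beta_*\big((\rBlA)_0\big) = \big\langle \shA_0',\, i_{L*}\shC_L \big\rangle$ with $i_{L*}\shC_L \subset \langle \shA_0(1-\ell), \ldots, \shA_{\ell-1}\rangle$ by Prop.~\ref{prop:app:lef:ref-bl}, the image involves negative twists of $\shA_0$ and hence is \emph{not} contained in $\shA_0 \boxtimes D(\PP(L))$: the plain pushforward does not land in $\shD_L$. (Twisting by $\sO(E \times \PP(L))$ repairs membership, since $\beta_*\sO(E) = \sO$ while $p_*\sO_{\PP(N_i)}(-1) = 0$ kills the exceptional piece, but then the center maps onto $\shA_0'$ only, which is too small --- showing that the mutation work you defer as ``the most delicate part'' is not bookkeeping but the actual content: it is exactly where the component $\shC_L$ on the $\rBlA$-side gets matched against the corresponding piece of $\shD_L$, and it occupies the bulk of the cited proof.) Relatedly, Lemma~\ref{lem:equiv-lef} requires a $\PP(L)$-linear functor with left adjoint inducing equivalences of centers \emph{in both directions}, whose engine is a cone-vanishing statement analogous to Lemma~\ref{lem:hpdcat}; compare how this paper executes precisely that pattern in (\ref{eqn:betakills}) and (\ref{eq:gcone=0}) in the proof of Thm.~\ref{thm:join-linear} --- none of this is attempted in your sketch. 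Two smaller slips: $\shD_L$ should carry a $\PP(L)$-linear (not ``$\PP(L^\vee)$-linear'') Lefschetz structure, most naturally transported through your first equivalence with $(\shA^\hpd)|_{\PP(L)}$ and refined to a moderate one if necessary; and ``$\beta^* s_L$ vanishes along the strict transform of $\shH_{X,L}$'' should instead say that the pulled-back $(1,1)$-section decomposes as the divisor sum $\shH_{\widetilde X/\PP(L^\vee)} + E \times \PP(L)$, compatibly with $\sO(H) \boxtimes \sO_{\PP(L)}(1) \cong \big(\sO(H-E) \boxtimes \sO_{\PP(L)}(1)\big) \otimes \big(\sO(E) \boxtimes \sO\big)$; your geometric setup (the birational morphism $\tilde\beta$, an isomorphism off $X_{L^\perp}$, contracting a $\PP^{\ell-2}$-bundle onto the codimension-$(\ell-1)$ locus $X_{L^\perp}\times\PP(L)$) is otherwise correct.
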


Note that since HPD is a reflexive relation, above theorem gives a complete answer to the question ``what is the HPD of a linear section of a pair of HPDs".

\newpage
\section{Categorical joins}
\subsection{Classical joins} \label{sec:classical_joins}
 Let $X_k \to \PP(V)$, $k=1,2$ be to two smooth varieties with proper morphisms to $\PP(V)$. Denote the hyperplane class of $\PP(V)$ by $H$ and its pullback to $X_k$ by $H_k$. The {\em abstract join} (called {\em resolved join} in \cite{KP18}) {\em of $X_1$ and $X_2$} is defined to be:
	\begin{align}\label{eq:abstract-join}
	p: P(X_1,X_2) := \PP_{X_1 \times X_2}(\sO(-H_1) \oplus \sO(-H_2))  \to X_1 \times X_2,
	\end{align}
which is a $\PP^1$-bundle over $X_1 \times X_2$, with two canonical sections:
	\begin{align}
	\label{eq:ek}
	\varepsilon_k \colon E_k(X_1, X_2) = \PP_{X_1 \times X_2}(\sO(-H_k)) 			\hookrightarrow P(X_1,X_2),
	\quad 
	k = 1,2.
	\end{align}
Note that $p|_{E_k}: E_k(X_1, X_2) \simeq X_1 \times X_2$, and $P(X_1,X_2)$ is equipped with a natural morphism
	\begin{align} 
	\label{eq:f:P}
	f \colon P(X_1,X_2) \to \PP(V \oplus V)
	\end{align}
 induced from the inclusion $\sO(-H_1) \oplus \sO(-H_2) \subset (V \oplus V) \otimes \sO$ of vector bundles over $X_1 \times X_2$. 
The the {\em ruled join of $X_1$ and $X_2$} is defined to be image of $P(X_1,X_2)$ in $\PP(V\oplus V)$:
	$$R(X_1,X_2): = f(P(X_1,X_2)) \subset \PP(V\oplus V).$$

Denote $\Delta_{V} = \{(v,v) | v \in V\} \subset V\oplus V$ the diagonal, and consider the linear projection $\pi_{\Delta_V} \colon \PP(V\oplus V) \dasharrow \PP(V)$ of $\PP(V\oplus V)$ from diagonal $\PP(\Delta_V)$. The image of the ruled join $R(X_1,X_2)$ under the rational map $\pi_{\Delta_{V}}$ is called the {\em (classical) join of $X_1$ and $X_2$}:
		$$X_1 \join X_2 := \pi_{\Delta_{V}}(R(X_1,X_2)) \equiv \overline{\pi_{\Delta_{V}}(R(X_1,X_2) \backslash \PP(\Delta_V))} \subset \PP(V).$$
The classical join is also denoted by $J(X_1,X_2)$, but be cautious that some authors use $J(X_1,X_2)$ for the ruled join $R(X_1,X_2)$. The classical joins has the meaning that it is the closure of union of lines connecting points of $X_1$ and $X_2$ in $\PP(V)$.

We have a chain of (rational) maps between these joins:
		\begin{align} 
		\label{eq:chain_joins}
		P(X_1,X_2)  \to R(X_1,X_2)  \dashrightarrow X_1 \join X_2,
		\end{align}
where the latter is a morphism if and only if $f^{-1}(\PP(\Delta_V)) \simeq X_1 \times_{\PP(V)} X_2 = \emptyset$, where $f$ is the map in (\ref{eq:f:P}). If this happens, then the above chain of morphisms is the {\em Stein factorization} of $P(X_1,X_2) \to X_1 \join X_2$. More concretely, $P(X_1,X_2) \to R(X_1,X_2)$ is a birational contraction (which contracts divisors $E_k(X_1,X_2)$ to $f(X_k)$, $k=1,2$), and $R(X_1,X_2) \to X_1 \join X_2$ is finite morphism of degree $d \ge 1$. (Note that for a general point $x \in X_1 \join X_2$, $d$ is the number of lines of the form $\langle x_1,x_2\rangle$, where $x_k \in X_k$, $k=1,2$, passing through $x$.)

In general if $X_1 \times_{\PP(V)} X_2 \neq \emptyset$, in order to eliminate the indeterminacy of the rational map (\ref{eq:chain_joins}), we need to blow-up $P(X_1,X_2)$ along $f^{-1}(\PP(\Delta_V)) \simeq X_1 \times_{\PP(V)} X_2$:
	\begin{align} 
		\label{eq:Bl_P}
		\beta_P \colon \widetilde{P}(X_1,X_2) := \Bl_{f^{-1}(\PP(\Delta_V))} P(X_1,X_2) \to P(X_1,X_2).
	\end{align}	
Denote the inclusion of exceptional divisor by:
	\begin{align}
	\label{eq:e_P}
	\varepsilon_P \colon E_P = E_P(X_1,X_2) \hookrightarrow \widetilde{P}(X_1,X_2).
	\end{align}
Let $\widetilde{\PP(V\oplus V)}$ be the blowing-up of $\PP(V\oplus V)$ along diagonal $\PP(\Delta_V) \subset \PP(V\oplus V)$, then there is a morphism $\widetilde{\PP(V\oplus V)} \to \PP(V)$. By 
construction $\widetilde{P}(X_1,X_2)$ admits a morphism to $\PP(V)$ given by the composition:
	$$\tilde{f} \colon \widetilde{P}(X_1,X_2) \to \widetilde{\PP(V\oplus V)} \to \PP(V).$$
Since the sections $E_k(X_1, X_2)$ are disjoint from $f^{-1}(\PP(\Delta_V))$, therefore their strict transforms are just the inverse images; denote the inclusions of the strict transforms by 
	$$\widetilde{\varepsilon_k} \colon \widetilde{E_k}(X_1, X_2) = \beta_{P}^{-1}(E_k(X_1,X_2)) \hookrightarrow  \widetilde{P}(X_1,X_2),$$
then $\beta_{P}$ induces $\widetilde{E_k}(X_1, X_2)  \simeq E_k(X_1,X_2)$, $k=1,2$. Notice that the restrictions of (\ref{eq:f:P}):
	$$f|_{\widetilde{E_k}} \colon \widetilde{E_k}(X_1,X_2) \to \PP(V), \quad k=1,2$$
factor through the isomorphisms:
	\begin{align*}
	f|_{\widetilde{E_1}} \colon \widetilde{E_1}(X_1,X_2)  \to \widetilde{\PP}(V\oplus\{0\}) \xrightarrow{\sim} \PP(V), 
	\qquad f|_{\widetilde{E_2}} \colon  \widetilde{E_2}(X_1,X_2) \to \widetilde{\PP}(\{0\} \oplus V) \xrightarrow{\sim} \PP(V),
	\end{align*}
where $\widetilde{\PP}(V\oplus\{0\})$ and $\widetilde{\PP}(\{0\} \oplus V)$ are the proper transforms of $\PP(V\oplus\{0\})$ and respectively $\PP(\{0\} \oplus V)$ along the blowing up $\widetilde{\PP(V\oplus V)} \to \PP(V \oplus V)$. The image of $f|_{\widetilde{E_k}}$ coincides with the image of $X_k \to \PP(V)$, $k=1,2$.

\subsection{Categorical joins} \label{sec:cat_joins} Many constructions of previous sections can be carried out for categories by constructions of \S \ref{sec:geometric_construction}. Let $X_k \to \PP(V)$, $k=1,2$ to two smooth varieties with proper morphisms to $\PP(V)$. Assume that $X_1 \times_{\PP(V)} X_2$ is of expected dimension $\dim X_1 + \dim X_2 - \dim \PP(V)$ at every point. \footnote{~As usual, this Tor-independence condition can be removed if we consider derived intersection $X_1 \times_{\PP(V)} X_2$ rather than scheme-theoretic intersection}. 

Let $\shA^{(k)}\subset D(X_k)$ be $\PP(V)$-linear admissible subcategories. 
First, the {\em abstract join} of $\shA^{(1)}$ and $\shA^{(2)}$ can be defined to be: 
	$$P(\shA^{(1)}, \shA^{(2)}): = \PP_{\shA^{(1)} \boxtimes \shA^{(2)}}((\sO(-H_1) \oplus \sO(-H_2))) \subset D({P}(X_1,X_2)),$$
by projective bundle construction of \S \ref{sec:app:proj_bd}. Then $P(\shA^{(1)}, \shA^{(2)})$ is a $\PP^1$-bundle category over $\shA^{(1)} \boxtimes \shA^{(2)}$, with induced adjoint functors $p^* \colon \shA^{(1)} \boxtimes \shA^{(2)} \rightleftarrows P(\shA^{(1)}, \shA^{(2)}) \colon p_* $, where $p$ is the $\PP^1$-bundle map (\ref{eq:abstract-join}). The zero sections (\ref{eq:ek}) corresponds to admissible subcategories
	$$E_k(\shA^{(1)}, \shA^{(2)}) \subset D(E_k(X_1,X_2)), \qquad k=1,2,$$
with induced adjoint functors $\eps_k^* \colon P(\shA^{(1)}, \shA^{(2)})  \rightleftarrows E_k(\shA^{(1)}, \shA^{(2)}) \colon \eps_{k*}$, where $\varepsilon_k$ is the inclusion (\ref{eq:ek}).
Notice the isomorphism
	$$p \, \circ \, \eps_k \colon E_k(X_1,X_2) \simeq X_1 \times X_2$$
induces equivalences of categories
	$$\eps_{k}^* \circ p^*  \colon E_k(\shA^{(1)}, \shA^{(2)}) \simeq \shA^{(1)} \boxtimes \shA^{(2)} \colon p_*  \circ \eps_{k*}~.$$

Next we can apply the construction $\widetilde{P}(X_1,X_2)$ to categories $\shA^{(1)}$ and $\shA^{(2)}$. By construction of \S \ref{sec:bl} applied to $Z = \PP(\Delta_V) \subset S = \PP(V\oplus V)$, the blowing up category of $P(\shA^{(1)}, \shA^{(2)})$ along the category $P(\shA^{(1)}, \shA^{(2)})|_{\PP(\Delta_V)} \simeq \shA^{(1)} \boxtimes_{\PP(V)} \shA^{(2)}$ is
	$$\widetilde{P}(\shA^{(1)}, \shA^{(2)}) := {P}(\shA^{(1)}, \shA^{(2)})  \boxtimes_{\PP(V\oplus V)}  D(\widetilde{\PP(V\oplus V)}) \subset D(\widetilde{P}(X_1,X_2)),$$
 which is a $\PP(V)$-linear subcategory of $D(\widetilde{P}(X_1,X_2))$. The exceptional divisor $E_P(\shA^{(1)}, \shA^{(2)}) \subset E_P(X_1,X_2)$ is a projective bundle over the centre $\shA^{(1)} \boxtimes_{\PP(V)} \shA^{(2)}$ of the blow-up, and the inclusion (\ref{eq:e_P}) of exceptional divisor induces adjoint functors:
 	$$\varepsilon_{P}^* \colon \widetilde{P} (\shA^{(1)}, \shA^{(2)}) \rightleftarrows E_P(\shA^{(1)}, \shA^{(2)}) \colon \varepsilon_{P*} ~.$$
Furthermore, the strict transforms of $E_k(\shA^{(1)}, \shA^{(2)})$'s under blow-up (\ref{eq:Bl_P}) can be defined as:
 $$\widetilde{E_k}(\shA^{(1)}, \shA^{(2)}) = {E_k}(\shA^{(1)}, \shA^{(2)})  \boxtimes_{\PP(V\oplus V)}  D(\widetilde{\PP(V\oplus V)}) \subset D(\widetilde{E_k}(X_1,X_2)), \qquad k=1,2.$$
Since $E_k(X_1,X_2)$ are disjoint from the blow-up center, there are equivalences 	
	\begin{align}
	\label{eq:strict_equiv}
	p_* \circ \beta_{P*} \colon \widetilde{E_k}(\shA^{(1)}, \shA^{(2)}) \simeq {E_k}(\shA^{(1)}, \shA^{(2)}) \simeq \shA^{(1)} \boxtimes \shA^{(2)}.
	\end{align}
Notice that $\widetilde{E_k}(\shA^{(1)}, \shA^{(2)}) \subset D(\widetilde{E_k}(X_1,X_2))$ are $\PP(V)$-linear subcategories with $\PP(V)$-linear structures induced from $f|_{\widetilde{E_k}}^*$, and the induced morphisms $\eps_k^*$, $\eps_{k*}$ and also $\eps_{k!}$ are $\PP(V)$-linear.

\bigskip
As mentioned in introduction, the categorical join $\shA^{(1)} \ccup \shA^{(2)}$ will be defined by three steps. Assume from now on $\shA^{(k)}\subset D(X_k)$ are $\PP(V)$-linear Lefschetz subcategories, of length $m_k$, with Lefschetz center $\shA^{(k)}_0$ and components $\shA^{(k)}_i$, $k=1,2$. 

\medskip \noindent \textbf{Step $1$.} Following \cite{KP18}, we define the {\em categorical ruled join} $\shJ(\shA^{(1)}, \shA^{(2)})$ to be the full $\PP(V \oplus V)$-linear subcategory of $P(\shA^{(1)}, \shA^{(2)})$ characterized by:
\begin{equation*}
	{\shJ}= \shJ(\shA^{(1)}, \shA^{(2)}) := \left\{  C \in {P}(\shA^{(1)}, \shA^{(2)})  \ \left|  \
	\begin{aligned}
	{\varepsilon_1}^*(C) &\in \shA^{(1)} \boxtimes \shA^{(2)}_0 \subset {E_1}(\shA^{(1)}, \shA^{(2)}) 	 \\ 
	{\varepsilon_2}^*(C) &\in \shA^{(1)}_0 \boxtimes \shA^{(2)} \subset {E_2}(\shA^{(1)}, \shA^{(2)}) 
	\end{aligned}
	 \right.\right\}.
\end{equation*}
In the geometric case $\shA^{(k)} = D(X_k)$, $\shJ(X_1,X_2): = \shJ(D(X_1),D(X_2))$ is a categorical birational modification of the ruled join $R(X_1,X_2) \subset \PP(V\oplus V)$.

\begin{lemma} \label{lem:lef:abstrac-join}
 $\shJ$ is a $\PP(V\oplus V)$-linear Lefschetz category with respect to $\otimes \sO_{\PP(V\oplus V)}(1)$, with Lefschetz decomposition of length $m = m_1 + m_2$:
	\begin{align*} 
	\shJ = \langle \shJ_0, \shJ_1 (1), \cdots, \shJ_{m-1}((m-1))\rangle,
	\end{align*}
where the Lefschetz components are 
given by $\shJ_i : = p^* \bar{\shJ}_i$, the fully faithful image of $\bar{\shJ}_i$ under $p^*$ (where $p$ is the $\PP^1$-bundle map (\ref{eq:abstract-join})), and $\bar{\shJ}_i \subset \shA^{(1)} \boxtimes \shA^{(2)}$ is defined by:
	\begin{align*}
	& \bar{\shJ}_i : = \left \langle  \foa_{i_1}^{(1)} \otimes \foa_{i_2}^{(2)} 	\ \left|  \
	\substack{ i_1 + i_2 \ge i-1 \\
	~~\\
	i_1, i_2 \,\ge \, 0.}
	 \right. \right\rangle  \ \subset \ \shA_0^{(1)} \otimes \shA_0^{(2)} \quad \text{for} \quad i = 0, 1, \ldots, m-1,
	\end{align*}
where $\foa_{i_1}^{(k)}$'s are the primary components of the Lefschetz category $\shA^{(k)}$.
\end{lemma}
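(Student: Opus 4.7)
My plan is to prove the lemma by essentially the same strategy as for the disjoint case (\cite[proof of Thm.\ 3.21]{KP18}); since the boundary conditions defining $\shJ$ only involve the two sections $\varepsilon_k\colon E_k \hookrightarrow P(X_1, X_2)$ and not the fiber product $X_1 \times_{\PP(V)} X_2$, that argument applies here with no modification.

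First, I identify $f^*\sO_{\PP(V\oplus V)}(1)$ with the relative tautological bundle $\sO_{P/X_1 \times X_2}(1)$ of the $\PP^1$-bundle $p\colon P(X_1, X_2) \to X_1 \times X_2$, so that Orlov's projective bundle formula (Thm.\ \ref{thm:app:pr_bd}) gives the ambient decomposition
$$P(\shA^{(1)}, \shA^{(2)}) = \langle p^*(\shA^{(1)} \boxtimes \shA^{(2)}),\ p^*(\shA^{(1)} \boxtimes \shA^{(2)})(1)\rangle.$$
Under the isomorphisms $p|_{E_k}\colon E_k \simeq X_1 \times X_2$, a short computation using $\varepsilon_k^*\sO_P(1) = \sO(H_k)$ yields
$$\varepsilon_1^*\bigl(p^*(A_1 \boxtimes A_2)(\iota)\bigr) = A_1(\iota) \boxtimes A_2, \qquad \varepsilon_2^*\bigl(p^*(A_1 \boxtimes A_2)(\iota)\bigr) = A_1 \boxtimes A_2(\iota),$$
for all $A_1 \boxtimes A_2 \in \shA^{(1)} \boxtimes \shA^{(2)}$ and $\iota \in \{0,1\}$.

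Second, I refine the above using the product of right Lefschetz decompositions of $\shA^{(1)}$ and $\shA^{(2)}$ (Prop.\ \ref{prop:product}), expanded in primary components $\foa^{(k)}_{i_k}$. Testing each resulting block against the two conditions defining $\shJ$ via the formulae above identifies which ones survive, and mutating the surviving $\iota=0$ and $\iota=1$ pieces into a single semiorthogonal sequence produces the right Lefschetz decomposition
$$\shJ = \langle p^*\bar{\shJ}_0,\ p^*\bar{\shJ}_1(1),\ \ldots,\ p^*\bar{\shJ}_{m-1}(m-1)\rangle,$$
with $m = m_1 + m_2$ and $\bar{\shJ}_i \subset \shA^{(1)}_0 \boxtimes \shA^{(2)}_0$ generated by the primary pieces $\foa^{(1)}_{i_1} \boxtimes \foa^{(2)}_{i_2}$ landing in the $i$-th layer. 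The defining inequality $i_1 + i_2 \ge i - 1$ arises from the extra $\iota = 1$ slot contributed by the $\PP^1$-fiber, which shifts the expected threshold by one and raises the overall length from $m_1 + m_2 - 1$ to $m_1 + m_2$. Running the same procedure with the left Lefschetz decompositions of the $\shA^{(k)}$ gives the corresponding left Lefschetz decomposition with the same center; the chain $\bar{\shJ}_0 \supseteq \cdots \supseteq \bar{\shJ}_{m-1}$ is immediate from the inequality, and moderateness $m_1 + m_2 < 2\rank V = \rank(V\oplus V)$ is inherited from moderateness of each $\shA^{(k)}$.

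The principal obstacle is the combinatorial bookkeeping in the mutation step: one must carefully track how blocks from the $\iota = 0$ and $\iota = 1$ summands of the ambient decomposition, carrying various internal Lefschetz twists, regroup into each $p^*\bar{\shJ}_i(i)$. The appearance of the shifted bound $i_1 + i_2 \ge i - 1$ (instead of the naive $\ge i$) is the delicate point, but once it is established the semiorthogonality of the listed pieces and the identification $\bar{\shJ}_0 = \shA^{(1)}_0 \boxtimes \shA^{(2)}_0$ both follow from routine applications of Prop.\ \ref{prop:product} and Orlov's formula.
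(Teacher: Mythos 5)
Your proposal is correct and takes essentially the same route as the paper: the paper's proof of this lemma consists precisely of the remark that the arguments of \cite[Thm.\ 3.21]{KP18} apply verbatim, which is what you carry out, and your key observation --- that the conditions defining $\shJ$ involve only the sections $\varepsilon_k \colon E_k \hookrightarrow P(X_1,X_2)$ and the $\PP^1$-bundle structure, not the fiber product $X_1 \times_{\PP(V)} X_2$ --- is exactly why that citation is legitimate here. Your supporting computations ($f^*\sO_{\PP(V\oplus V)}(1) \simeq \sO_{P}(1)$, $\varepsilon_k^*\sO_P(1) = \sO(H_k)$, moderateness $m_1+m_2 < \rank(V\oplus V)$, and the source of the shifted bound $i_1+i_2 \ge i-1$) are all accurate.
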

See also Thm. \ref{thm:min-rep} for general case of $n$ Lefschetz categories.
\begin{proof} This follows from the same arguments of \cite[Thm. 3.21]{KP18}. 
\end{proof}

Dually there is also a left Lefschetz decomposition for $\shJ$ with respect to $\otimes \sO_{\PP(V\oplus V)}(1)$:
	$$\shJ = \langle \shJ_{1-m}((1-m)), \cdots, \shJ_{-1} (-1), \shJ_{0} \rangle,$$
where $\shJ_i : = p^* \bar{\shJ}_{-i}$, and $\bar{\shJ}_{-i} \subset \shA^{(1)} \boxtimes \shA^{(2)}$ is defined by:
	\begin{align*}
& \bar{\shJ}_{-i} : = \left \langle  \foa_{-i_1}^{(1)} \otimes \foa_{-i_2}^{(2)} 	\ \left|  \
	\substack{ i_1 + i_2 \ge i-1 \\
	~~\\
	i_1, i_2 \,\ge \, 0.}
	 \right. \right\rangle  \ \subset \ \shA_0^{(1)} \otimes \shA_0^{(2)} \quad \text{for} \quad  i = 0, 1, \ldots, m-1.
	\end{align*}
From now on for simplicity we will simply consider right Lefschetz decompositions; the version for left Lefschetz decompositions can be proved similarly.

\medskip \noindent \textbf{Step $2$.}  Let $\widetilde{\shJ}$ be the blow-up category of $\shJ$ along the diagonal $\PP(\Delta_V) \subset \PP(V\oplus V)$,
	$$ \widetilde{\shJ}  = \widetilde{\shJ}(\shA^{(1)}, \shA^{(2)}) : =  \shJ \boxtimes_{\PP(V\oplus V)} D(\widetilde{\PP(V\oplus V)}) \subset \widetilde{P}(\shA^{(1)}, \shA^{(2)}).$$
Then $\widetilde{\shJ}$ is a $\PP(V)$-linear category, instead of a $\PP(V\oplus V)$-linear category.

 \begin{lemma} 
 \label{lem:bl:join}
 The blow-up functor $\beta_P^* \colon \shJ \to \widetilde{\shJ}$ and the functors
		$$\eps_{P *} (- \otimes \sO_{\PP(V)}(i)) \colon  \shA^{(1)} \boxtimes_{\PP(V)}  \shA^{(2)} \simeq P(\shA^{(1)}, \shA^{(2)})|_{\PP(\Delta_V)}  
		\to   \widetilde{\shJ},  \qquad i \in \ZZ$$ 
	are fully faithful, where $\beta_P$ and $\eps_{P}$ are defined in (\ref{eq:Bl_P}) and (\ref{eq:e_P}), and their images induce semiorthogonal decompositions: 
		\begin{align*}  \widetilde{\shJ} & = \langle \beta_P^* (\shJ), ~ (\shA^{(1)} \boxtimes_{\PP(V)} \shA^{(2)})_0, (\shA^{(1)} \boxtimes_{\PP(V)} \shA^{(2)})_1, \ldots, (\shA^{(1)} \boxtimes_{\PP(V)} \shA^{(2)})_{N-2} \rangle;\\
				& = \langle (\shA^{(1)} \boxtimes_{\PP(V)} \shA^{(2)})_{1-N}, \ldots, (\shA^{(1)} \boxtimes_{\PP(V)} \shA^{(2)})_{-2}, (\shA^{(1)} \boxtimes_{\PP(V)} \shA^{(2)})_{-1}, ~ \beta_P^* ( \shJ )\rangle,
			\end{align*}
	where $(\shA^{(1)} \boxtimes_{\PP(V)} \shA^{(2)})_i$ denotes the fully faithful image of $\shA^{(1)} \boxtimes_{\PP(V)} \shA^{(2)}$ under functor $\eps_{P *} (- \otimes \sO_{\PP(V)}(i))$, $i \in \ZZ$, and $N$ is the rank of $V$. 
 \end{lemma}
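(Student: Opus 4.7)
The plan is to apply Orlov's blowing-up formula for linear categories (Theorem~3.3) to the $\PP(V \oplus V)$-linear Lefschetz category $\shJ$, with smooth center the closed embedding $\PP(\Delta_V) \hookrightarrow \PP(V \oplus V)$, which has codimension $N = \rank V$ and normal bundle canonically isomorphic to $V \otimes \sO_{\PP(V)}(1)$. First I verify the geometric hypotheses: the fiber $P(X_1, X_2) \times_{\PP(V \oplus V)} \PP(\Delta_V)$ is naturally identified with $X_1 \times_{\PP(V)} X_2$, since a point $(x_1, x_2, [a:b]) \in P(X_1, X_2)$ maps under $f$ into $\PP(\Delta_V)$ exactly when $\phi(x_1) = \psi(x_2)$ in $\PP(V)$, in which case $[a:b]$ is uniquely determined by this equation. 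The expected dimension hypothesis on $X_1 \times_{\PP(V)} X_2$ then ensures that $f^{-1}(\PP(\Delta_V))$ has the expected codimension $N$ in $P(X_1, X_2)$, which supplies the Tor-independence needed by Theorem~3.3.

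Second, I identify the center of the blow-up as $\shA^{(1)} \boxtimes_{\PP(V)} \shA^{(2)}$. By the compatibility of exterior products with base change (Lemma~2.14), $P(\shA^{(1)}, \shA^{(2)})|_{\PP(\Delta_V)}$ is equivalent to $\shA^{(1)} \boxtimes_{\PP(V)} \shA^{(2)}$, sitting inside $D(X_1 \times_{\PP(V)} X_2)$. Moreover, the sections $E_k(X_1, X_2)$ map under $f$ into $\PP(V \oplus 0)$ and $\PP(0 \oplus V)$, respectively, both of which are disjoint from $\PP(\Delta_V)$. Hence the defining conditions $\eps_k^*(C) \in \shA^{(1)} \boxtimes \shA^{(2)}_0$ (and its analogue) cut out $\shJ \subset P(\shA^{(1)}, \shA^{(2)})$, and they become vacuous after base change to $\PP(\Delta_V)$; therefore $\shJ|_{\PP(\Delta_V)} = \shA^{(1)} \boxtimes_{\PP(V)} \shA^{(2)}$.

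With these two preparations, Theorem~3.3 applied with $S = \PP(V \oplus V)$, $Z = \PP(\Delta_V)$, $r = N$, $\shA = \shJ$ directly yields both the right and left semiorthogonal decompositions of $\widetilde{\shJ}$ claimed in the lemma, except that the twists are initially expressed via the tautological relative line bundle $\sO_{\PP(N^\vee)}(i)$ on the exceptional divisor $E_P = \PP_{X_1 \times_{\PP(V)} X_2}(N^\vee)$. The final step is to reconcile these with the twists $\sO_{\PP(V)}(i)$ in the statement: under the resolved linear projection $\widetilde{\PP(V \oplus V)} \to \PP(V)$, the pullback of $\sO_{\PP(V)}(1)$ to the exceptional divisor of $\widetilde{\PP(V\oplus V)} \to \PP(V\oplus V)$ agrees, up to a twist pulled back from $\PP(\Delta_V) \simeq \PP(V)$, with the tautological bundle on the projectivized normal bundle. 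This identification passes to $E_P$ by base change, and the residual discrepancy (a line bundle pulled back from the center) can be absorbed using the $\PP(V)$-linear structure on $\shA^{(1)} \boxtimes_{\PP(V)} \shA^{(2)}$, matching Orlov's functor $\eps_{P*}(p^*(-) \otimes \sO_{\PP(N^\vee)}(i))$ with the statement's $\eps_{P*}(- \otimes \sO_{\PP(V)}(i))$.

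The main obstacle, as I see it, is exactly this last bookkeeping step: carefully aligning the tautological $\sO(1)$ on the exceptional divisor of the blow-up with the hyperplane class pulled back from $\PP(V)$ via the $\PP(V)$-linear structure on $\widetilde{\shJ}$. While geometrically natural, verifying that the corresponding functors agree (and not merely that they differ by a reindexing consistent with the Lefschetz structure) requires an explicit computation with the resolved projection $\widetilde{\PP(V \oplus V)} \to \PP(V)$ and the conventions of Theorem~3.3.
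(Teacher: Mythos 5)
Your proposal is correct and follows the paper's own route exactly: the paper proves this lemma in one line by applying Orlov's blowing-up formula (Thm.~\ref{thm:bl}) with $S = \PP(V\oplus V)$, $Z = \PP(\Delta_V)$, $r = N$, leaving implicit the verifications you spell out. Your careful points --- that $f^{-1}(\PP(\Delta_V)) \simeq X_1 \times_{\PP(V)} X_2$ with the expected-dimension hypothesis supplying Tor-independence, that $\shJ|_{\PP(\Delta_V)} = \shA^{(1)} \boxtimes_{\PP(V)} \shA^{(2)}$ since the sections $E_k$ miss the center, and that $\tilde f^*\sO_{\PP(V)}(1)|_{E_P} = \sO_{\PP(N^\vee)}(1) \otimes p^*\sO(1)$ so the twist discrepancy is an autoequivalence of the $\PP(V)$-linear center leaving the images $(\shA^{(1)} \boxtimes_{\PP(V)} \shA^{(2)})_i$ unchanged --- are all sound and make the paper's terse proof fully rigorous.
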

\begin{proof} This follows directly from blowing up formula Thm. \ref{thm:bl}.
\end{proof}

\medskip \noindent \textbf{Step $3$.}  The $\PP(V)$-linear category $\widetilde{\shJ}$ is in general not a Lefschetz category. We will remove the redundant components of $\widetilde{\shJ}$ and define the rest essential component to be the categorical join. Denote the action functor of the $\PP(V)$-linear category $\widetilde{\shJ}$ by $\act \colon \widetilde{\shJ} \boxtimes \PP(V) \to \widetilde{\shJ}$.

\begin{lemma} \label{lem:join:ref-bl} The functor $\act$ is fully faithful on the following subcategories of $\widetilde{\shJ} \boxtimes \PP(V)$:
		$$\beta_P^*(\shJ_{N-1}) \boxtimes D(\PP(V)), \ldots, \beta_P^*(\shJ_{m-1}(m-N))\boxtimes D(\PP(V)),$$
where $m=m_1 + m_2$, and their images form a semiorthogonal sequence in $\widetilde{\shJ}$. 
\end{lemma}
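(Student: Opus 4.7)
My plan is to deduce this lemma as a direct application of Lemma \ref{lem:app:ref-bl}, taking the role of $\shA$ to be the Lefschetz category $\shJ$ and choosing a specific linear subbundle $L \subset (V \oplus V)^\vee$ so that the refined-blow-up data over $\PP(V \oplus V)$ recovers the blow-up and $\PP(V)$-linear structure of Lemma \ref{lem:bl:join}. Concretely, let $L$ be the rank-$N$ anti-diagonal subbundle of $(V\oplus V)^\vee$, characterized by $L^\perp = \Delta_V \subset V \oplus V$; then $\PP(L^\perp) = \PP(\Delta_V)$, and the canonical isomorphism $(V \oplus V)/\Delta_V \simeq V$ identifies $\PP(L^\vee)$ with $\PP(V)$ so that the linear projection $\PP(V \oplus V) \dashrightarrow \PP(L^\vee)$ from $\PP(\Delta_V)$ coincides with the projection used to define $\tilde f \colon \widetilde{P}(X_1,X_2) \to \PP(V)$.

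Next I would verify the hypotheses of Lemma \ref{lem:app:ref-bl}. The base locus $P(X_1,X_2) \times_{\PP(V \oplus V)} \PP(L^\perp) = f^{-1}(\PP(\Delta_V))$ is naturally isomorphic to $X_1 \times_{\PP(V)} X_2$, so it has codimension $N = \rank L$ by our standing expected-dimension assumption. By Lemma \ref{lem:lef:abstrac-join}, $\shJ$ is a moderate $\PP(V\oplus V)$-linear Lefschetz category of length $m = m_1 + m_2$ with components $\shJ_0,\shJ_1,\ldots,\shJ_{m-1}$. Finally, the blow-up category $\widetilde{\shJ} := \shJ \boxtimes_{\PP(V\oplus V)} D(\widetilde{\PP(V\oplus V)})$ from Step~2 coincides with the construction of \S\ref{sec:HPDbs:ref-Bl} applied to the pair $(\shJ, L)$, and its $\PP(V)$-linear structure matches the $\PP(L^\vee)$-linear structure there. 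Applying Lemma \ref{lem:app:ref-bl} with $(\shA,\ell) = (\shJ, N)$ then yields the claim: full faithfulness of the action functor on each $\beta_P^*(\shJ_i(i-N+1)) \boxtimes D(\PP(V))$ for $N-1 \leq i \leq m-1$, and semiorthogonality of their images inside $\widetilde{\shJ}$.

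The main (and quite mild) point I would need to verify is that Lemma \ref{lem:app:ref-bl}, originally phrased with $\shA$ an admissible subcategory of the derived category of a smooth scheme, applies verbatim when $\shA = \shJ$ is itself carved out of $D(P(X_1,X_2))$ by the definition in Step~1. This compatibility is however built into the base-change, projective-bundle, and blow-up formulas of \S\ref{sec:geometric_construction}, which are uniformly stated for admissible linear subcategories; the only additional geometric input required is the standard line-bundle identity $\tilde f^*\sO_{\PP(V)}(1) \simeq \beta_P^*\sO_{\PP(V\oplus V)}(1) \otimes \sO(-E_P)$ on $\widetilde{P}(X_1,X_2)$, which is the blow-up resolution of the linear projection from $\PP(\Delta_V)$.
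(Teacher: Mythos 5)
Your proposal is correct and takes essentially the same route as the paper: the paper's proof is precisely the one-line application of Lem.~\ref{lem:app:ref-bl} to the $\PP(V\oplus V)$-linear Lefschetz category $\shJ$ with the anti-diagonal $L = -\Delta_{V^\vee} \subset V^\vee \oplus V^\vee$ (so $L^\perp = \Delta_V$ and $L^\vee = V$), exactly your choice. Your extra verifications --- codimension $N$ of the base locus from the expected-dimension assumption, moderateness of $\shJ$ via Lem.~\ref{lem:lef:abstrac-join}, and the identification of $\widetilde{\shJ}$ with the refined-blow-up setup of \S\ref{sec:HPDbs:ref-Bl} --- are just the hypotheses the paper leaves implicit, and your indexing $\beta_P^*(\shJ_i(i-N+1))$ for $N-1 \le i \le m-1$ matches the stated sequence.
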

\begin{proof} This follows from Lem. \ref{lem:app:ref-bl} applied to the $\PP(V\oplus V)$-linear category $\shJ$ and anti-diagonal $L = -\Delta_{V^\vee} \subset V^\vee \oplus V^\vee$ (then $L^\perp = \Delta_{V} \subset V\oplus V$ and $L^\vee = V$).
\end{proof}

Denote by the category generated by images of above lemma by:
	$$\widetilde{\shJ}^{\rm amb} : = \big\langle \act\big(\beta_P^*(\shJ_{N-1}) \boxtimes D(\PP(V))\big), \ldots, \act\big( \beta_P^*(\shJ_{m-1}(m-N))\boxtimes D(\PP(V)) \big)\rangle \subset \widetilde{\shJ}.$$
 Then $\widetilde{\shJ}^{\rm amb} \subset \widetilde{\shJ}$ is a $\PP(V)$-linear subcategory. Note that $\widetilde{\shJ}^{\rm amb} = \emptyset$ if and only if $m < N$.

\begin{definition}\label{defn:cat_joins}	
For $\PP(V)$-linear Lefschetz subcategories $\shA^{(k)}$'s with Lefschetz centers $\shA^{(k)}_0$'s as above, $k=1,2$. The {\em categorical join} $\shA^{(1)} \ccup \shA^{(2)}$ of $\shA^{(1)}$ and $\shA^{(2)}$ is defined by:
	$$\shA^{(1)} \ccup \shA^{(2)} : = (\widetilde{\shJ}^{\rm amb})^\perp \subset \widetilde{\shJ}.$$
\end{definition}

From equivalences (\ref{eq:strict_equiv}), the Step $(1)$ and Step $(2)$  of above construction are {\em interchangeable}. Therefore the categorical join $\shA^{(1)} \ccup \shA^{(2)}$ can be equivalently defined as:
	\begin{equation*}
	\shA^{(1)} \ccup \shA^{(2)} := \left\{  C \in (\widetilde{\shJ}^{\rm amb})^\perp \subset \widetilde{P}(\shA^{(1)}, \shA^{(2)})  \ \left|  \
	\begin{aligned}
	\widetilde{\varepsilon_1}^*(C) &\in \shA^{(1)} \boxtimes \shA^{(2)}_0 \subset \widetilde{E_1}(\shA^{(1)}, \shA^{(2)}) 	 \\ 
	\widetilde{\varepsilon_2}^*(C) &\in \shA^{(1)}_0 \boxtimes \shA^{(2)} \subset \widetilde{E_2}(\shA^{(1)}, \shA^{(2)}) 
	\end{aligned}
	 \right.\right\}.
	\end{equation*}

\medskip
If $\shA^{(1)} \boxtimes_{\PP(V)} \shA^{(2)}= \emptyset$, then $\widetilde{P}(\shA^{(1)}, \shA^{(2)}) ={P}(\shA^{(1)}, \shA^{(2)})$ and $\widetilde{E_k}(\shA^{(1)}, \shA^{(2)}) = E_k(\shA^{(1)}, \shA^{(2)})$, $\widetilde{\eps_k}  = \eps_k$, $k=1,2$. Therefore $\shA^{(1)} \ccup \shA^{(2)} = \shJ(\shA^{(1)}, \shA^{((2)})$, and this definition agrees with the one given in \cite{KP18} which treats the case $X_k \to \PP(V_k)$, $V = V_1 \oplus V_2$.

\medskip {\em Convention}. If one of $\shA^{(k)}$ is geometric $k = 1,2$, say $\shA^{(2)} = D(S_2)$, then we use $\shA^{(1)} \ccup S_2$ to denote $\shA^{(1)} \ccup D(S_2)$. If both $\shA^{(k)}$'s are geometric, $\shA^{(k)} = D(S_k)$, then we use $S_1 \ccup S_2$ to denote the categorical join $D(S_1) \ccup D(S_2)$.

\begin{proposition}\label{prop:lef:cat_join} The categorical join $\shA^{(1)} \ccup \shA^{(2)}$ is a $\PP(V)$-linear (moderate) Lefschetz category with Lefschetz center and (right) Lefschetz components:
	$$(\shA^{(1)} \ccup \shA^{(2)})_0 = \langle \shJ_0', \sE \rangle, \qquad (\shA^{(1)} \ccup \shA^{(2)})_i  = \langle \shJ_i' , \sE \rangle, \quad 0 \le i \le N-2$$ 
where $\sE$ is the full subcategory of $\shA^{(1)} \boxtimes_{\PP(V)} \shA^{(2)}$ defined by	
	\begin{align*} 
	\sE &= \big\langle i_\Delta^* \shJ_{N}(1), \ldots, i_\Delta^* \shJ_{m-1}(m-\ell) \big \rangle^\perp \subset \shA^{(1)} \boxtimes_{\PP(V)} \shA^{(2)} \simeq \shJ|_{\PP(\Delta_V)} \\
	& = \{ C \in \shA^{(1)} \boxtimes_{\PP(V)} \shA^{(2)} \mid i_{\Delta*} C \in \langle \shJ_0(1-N), \shJ_{1}(2-N) \ldots, \shJ_{N-1} \rangle \subset \shJ\},
	\end{align*}
where $i_{\Delta} \colon \PP(\Delta_V) \hookrightarrow \PP(V \oplus V)$ denotes the inclusion, and $\shJ_i' : = \shJ_{N-1}^\perp \cap \shJ_i \subset \shJ_0$, $0 \le i \le N-2$. Note that $\shJ_i'$ can be more explicitly given by the image of
	\begin{align*}
	\left \langle \foa_{i_1}^{(1)} \otimes \foa_{i_2}^{(2)} 	\ \left|  \
	\substack{ i -1 \le i_1 + i_2  \le N-2 \\
	~~\\
	i_1, i_2 \,\ge \, 0.}
	 \right. \right\rangle  \ \subset \ \shA_0^{(1)} \boxtimes \shA_0^{(2)} \quad \text{for} \quad {i \ge 0},
	\end{align*}
under the fully faithful functor $\beta_P^* \circ p^* \colon \shA^{(1)} \boxtimes \shA^{(2)}  \to \widetilde{P}(\shA^{(1)},\shA^{(2)})$.
\end{proposition}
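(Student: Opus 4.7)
The plan is to recognize that the construction of $\shA^{(1)} \ccup \shA^{(2)} = (\widetilde{\shJ}^{\rm amb})^\perp \subset \widetilde{\shJ}$ is a direct instance of the refined blow-up machinery of \S \ref{sec:HPDbs:ref-Bl} applied to a Lefschetz category that has already been set up. Specifically, the $\PP(V\oplus V)$-linear Lefschetz category $\shJ$ of Lemma \ref{lem:lef:abstrac-join} plays the role of $\shA$ in Proposition \ref{prop:app:lef:ref-bl}, with the linear subbundle taken to be $L = -\Delta_{V^\vee} \subset V^\vee \oplus V^\vee$ of rank $\ell = N$, so that $L^\perp = \Delta_V \subset V \oplus V$, $\PP(L^\vee) = \PP(V)$, and the base-locus-of-expected-dimension hypothesis is exactly the hypothesis $X_1 \times_{\PP(V)} X_2$ has expected dimension. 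The subcategory $\widetilde{\shJ}^{\rm amb}$ introduced in Lemma \ref{lem:join:ref-bl} matches tautologically with the ambient subcategory cut out by Lemma \ref{lem:app:ref-bl}, so the categorical join is identified with the refined blow-up $\rBl^{\rm ref}\shJ$.

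Once this identification is made, the moderate $\PP(V)$-linear Lefschetz structure on $\shA^{(1)} \ccup \shA^{(2)}$ of length $\ell - 1 = N - 1$, along with the form of its components $(\shA^{(1)} \ccup \shA^{(2)})_i = \langle \beta_P^* \shJ_i', \sE \rangle$ for $0 \le i \le N-2$, is immediate from Proposition \ref{prop:app:lef:ref-bl} by translating notation: $\shJ_i' = \shJ_{N-1}^\perp \cap \shJ_i$ plays the role of $\shA_k'$, and the essential component $\sE$ plays the role of $\shC_L$. The two equivalent descriptions of $\sE$ stated in the proposition — as the right orthogonal of $\langle i_\Delta^* \shJ_N(1), \ldots, i_\Delta^* \shJ_{m-1}(m-N)\rangle$ inside $\shA^{(1)} \boxtimes_{\PP(V)} \shA^{(2)}$, and as the subcategory cut out by $i_{\Delta*}C \in \langle \shJ_0(1-N), \ldots, \shJ_{N-1}\rangle$ — are exactly the two formulations appearing in Proposition \ref{prop:app:lef:ref-bl}.

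The only content not handed to us verbatim by Proposition \ref{prop:app:lef:ref-bl} is the explicit description of $\shJ_i'$ via the primary generators $\foa^{(1)}_{i_1} \otimes \foa^{(2)}_{i_2}$. For this, the plan is to exploit the $\PP^1$-bundle structure $p \colon P(X_1,X_2) \to X_1 \times X_2$: by Lemma \ref{lem:lef:abstrac-join} the Lefschetz subcategories of $\shJ$ satisfy $\shJ_k = p^*\bar{\shJ}_k$ with $p^*$ fully faithful (admitting right adjoint $p_*$), hence $p^*$ commutes with passing to right orthogonals inside its image. Therefore $\shJ_i' = p^*(\bar{\shJ}_{N-1}^\perp \cap \bar{\shJ}_i)$, and removing the subcategory $\bar{\shJ}_{N-1}$ generated by the blocks with $i_1 + i_2 \ge N-2$ imposes exactly the upper cutoff $i_1 + i_2 \le N-2$ on top of the existing lower bound $i_1 + i_2 \ge i - 1$, yielding the stated generating set. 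The main obstacle, and the place where care is needed, is verifying that the blocks $\{\foa^{(1)}_{i_1} \otimes \foa^{(2)}_{i_2}\}$ form a semiorthogonal family in $\bar{\shJ}_0 = \shA^{(1)}_0 \boxtimes \shA^{(2)}_0$ in a compatible order, so that right-orthogonal-taking against $\bar{\shJ}_{N-1}$ produces exactly the claimed generating set rather than a strictly smaller or larger one; this bookkeeping reduces to the separate Lefschetz semiorthogonalities of $\shA^{(1)}$ and $\shA^{(2)}$, combined with the product-of-decompositions statement of Proposition \ref{prop:product}.
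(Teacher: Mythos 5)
Your proposal matches the paper's proof exactly: the paper likewise deduces the proposition by applying Prop.~\ref{prop:app:lef:ref-bl} (together with Lemmas \ref{lem:lef:abstrac-join}, \ref{lem:bl:join} and \ref{lem:join:ref-bl}) to the $\PP(V\oplus V)$-linear Lefschetz category $\shJ$ and the anti-diagonal $L = -\Delta_{V^\vee} \subset V^\vee \oplus V^\vee$, so that $L^\perp = \Delta_V$, $\PP(L^\vee) = \PP(V)$ and $\ell = N$, identifying the categorical join with the refined blow-up of $\shJ$. Your extra bookkeeping for the explicit generators of $\shJ_i'$ (full faithfulness of $p^*$ commuting with orthogonals, plus the product decomposition of Prop.~\ref{prop:product}) is precisely the content the paper leaves implicit in its one-line proof, so nothing essential is missing.
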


\begin{proof} The proof follows directly from Lem. \ref{lem:lef:abstrac-join},  Lem. \ref{lem:bl:join}, Lem. \ref{lem:join:ref-bl}, and Prop. \ref{prop:app:lef:ref-bl} applied to the $\PP(V\oplus V)$-linear category $\shJ$ and anti-diagonal $L = -\Delta_{V^\vee} \subset V^\vee \oplus V^\vee$.
\end{proof}

\begin{remark} \label{rmk:Plucker} The category $\sE$ corresponds to {\em (left) primitive components} of the intersection $\shA^{(1)} \boxtimes_{\PP(V)} \shA^{(2)}$, denoted by ${}^{\rm prim}(\shA^{(1)} \boxtimes_{\PP(V)} \shA^{(2)})$ in \cite{JLX17} (see also \cite{KP18}). The main result of \cite{JLX17} states that $\sE$ is also equivalent to the (right) primitive component of the intersection of the HPDs, i.e. 
		$${}^{\rm prim}(\shA^{(1)} \boxtimes_{\PP(V)} \shA^{(2)})\simeq (\shA^{(1,\hpd)} \boxtimes_{\PP(V^\vee)} \shA^{(2,\hpd)})^{\rm prim}.$$
	 However, we do {\em not} use this result in this paper; in fact our main Thm. \ref{thm:main} actually {\em implies} this result, hence give another proof of the nonlinear HPD theorem of \cite{JLX17, KP18}.
\end{remark}

\begin{example} \label{eg:Gr(2,5)} Let $B = \Spec \CC$,  $X_1 = \Gr(2,5) \subset \PP^9$ by Pl\"ucker embedding and $X_2 = g \cdot \Gr(2,5) \subset \PP^9$, where $g$ is a generic element of ${\rm PGL}(10,\CC)$. Then $\shA^{(1)} = D(X_1)$, $\shA^{(2)} = D(X_2)$ are $\PP^9$-linear Lefschetz categories, with natural Lefschetz structures given in \cite[\Sec 6.1]{Kuz06Hyp}. The categorical join $\shA^{(1)} \ccup \shA^{(2)}$ is the $\PP^9$-linear Lefschetz category:
	$$\Gr(2,5) \ccup (g \cdot \Gr(2,5)) = \big \langle \sE, \sE(H), \ldots, \sE(8H)\big \rangle,$$
	where $\sE = D(\Gr(2,5) \cap g \cdot \Gr(2,5) )$ is the derived category of the intersection $\Gr(2,5) \cap g \cdot \Gr(2,5)$, which is a smooth Calabi-Yau threefold for a generic $g$.
\end{example}

\begin{example} Let $\shA^{(1)} = \shA$, $\shA^{(2)} = D(\PP(L^\perp))$, where $\shA$ is any $\PP(V)$-linear Lefschetz category with Lefschetz components $\shA_k$ and length $m$, and  $L\subset V^\vee$ is a subbundle of rank $\ell$. 
Then the categorical join $\shA \ccup \PP(L^\perp)$ is the $\PP(V)$-linear Lefschetz category:
\begin{align*}
	\shA \ccup \PP(L^\perp)& =  \Big \langle   \langle \shA_0', \shC_L \rangle, \langle \shA_0', \shC_L\rangle (H), \cdots,  \langle \shA_0', \shC_L\rangle ((N-\ell)H), \\
	& \langle \shA_1', \shC_L \rangle ((N-\ell+1)H), \langle \shA_2', \shC_L \rangle ((N-\ell+2)H), \cdots, \langle \shA_{\ell-2}', \shC_L  \rangle ((N- 2)H) \Big\rangle,
	\end{align*} 
where $\shA_k' = \shA_{\ell-1}^\perp \cap \shA_k \subset \shA_0$ for $k \ge 0$, and $\shC_L$ is the same as Prop. \ref{prop:app:lef:ref-bl}, i.e.
	\begin{align*} 
	\shC_L := \{ C \in \shA_{\PP(L^\perp)} \mid i_{L*} C \in \langle \shA_0(1-\ell), \shA_{1}(2-\ell) \ldots, \shA_{\ell-1} \rangle \subset \shA\},
	\end{align*}
where $i_{L*} \colon \shA_{\PP(L^\perp)} \to \shA$ is the push-forward given by inclusion. 
In particular if $\shA_{\PP(L^\perp)} = \emptyset$, then $\shC_L = \emptyset$, $\shA_k' = \shA_k$, $\shA \ccup \PP(L^\perp)$ is a Lefschetz category of length $m+ N -\ell$, with Lefschetz components $(\shA \ccup \PP(L^\perp))_k = \shA_0$ if $0 \le k \le N-\ell$ and $(\shA \ccup \PP(L^\perp))_k = \shA_{k-(N-\ell)}$ if $N-\ell \le k \le m+N-\ell$;
If $\shA_{\PP(L^\perp)} \neq \emptyset$ but $m < \ell$, then $\shC_L = \shA_{\PP(L^\perp)}$, $\shA_k' = \shA_k$, therefore $\shA \ccup \PP(L^\perp)$ is of length $N-1$, with Lefschetz components $(\shA \ccup \PP(L^\perp))_k = \langle \shA_{0}, \shA_{\PP(L^\perp)} \rangle$ if $0 \le k \le N-\ell$, $(\shA \ccup \PP(L^\perp))_k = \langle \shA_{k-(N-\ell)}, \shA_{\PP(L^\perp)} \rangle$ for $N -\ell \le k \le N-2$. 
\end{example}

\newpage
\section{Main results} \label{sec:main-results}

\subsection{HPD between joins and intersections} 

We assume $\shA^{(k)} \subset D(X_k)$'s are Lefschetz categories of length $m_k$, $k=1,2$, where $X_k \to \PP(V)$ are smooth varieties such that $X_1 \times_{\PP(V)} X_2$ is of expected dimension. We denote $\shA^{(k), \hpd}$ the HPD category of $\shA^{(k)}$, and denote $\shC$ the {\em double HPD category} of $\shA^{(1)}$ and $\shA^{(2)}$, i.e. the $n$-HPD category Def. \ref{def:nHPD} in the case of $n=2$. Our main result of this paper is ``HPD interchanges categorical joins and intersections", i.e.

\begin{theorem}[Main theorem]\label{thm:main} There are $\PP(V^\vee)$-linear equivalences of Lefschetz categories:
	$$(\shA^{(1)} \ccup \shA^{(2)} )^\hpd \simeq \shC \simeq  \shA^{(1), \hpd} \boxtimes_{\PP(V^\vee)}  \shA^{(2), \hpd}.$$
\end{theorem}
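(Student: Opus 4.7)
\medskip

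My plan is to split the assertion into the two equivalences $(\shA^{(1)} \ccup \shA^{(2)})^\hpd \simeq \shC$ and $\shC \simeq \shA^{(1),\hpd} \boxtimes_{\PP(V^\vee)} \shA^{(2),\hpd}$, following the ``chess game'' philosophy: rather than comparing the two sides directly, I realize all four categories as subcategories of $D(\shH(X_1,X_2))$ (or of ambient categories equipped with natural push--pull functors to it) and match them up. The role of Lem.~\ref{lem:equiv-lef} will be to reduce each equivalence to a check on Lefschetz centers, which is a considerable simplification.

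For the second equivalence, I would start from the observation that $\shH(X_1,X_2) = \shH_{X_1}\times_{\PP(V^\vee)}\shH_{X_2}$, and therefore, by the associativity and compatibility lemmas (Lem.~\ref{lem:ass:tensor} and~\ref{lem:tensor:bc}) and the definition of $n$-universal hyperplane categories in~\eqref{eq:def:n-Hyp}, we obtain $\shH(\shA^{(1)},\shA^{(2)}) = \shH_{\shA^{(1)}}\boxtimes_{\PP(V^\vee)}\shH_{\shA^{(2)}}$. Applying Prop.~\ref{prop:product} to Kuznetsov's HPD semiorthogonal decomposition (Lem.~\ref{lem:sodH}) on each factor gives a $\PP(V^\vee)$-linear decomposition of $\shH(\shA^{(1)},\shA^{(2)})$ whose unique ``deepest'' piece is $\shA^{(1),\hpd}\boxtimes_{\PP(V^\vee)}\shA^{(2),\hpd}$, and comparing its intrinsic characterization (objects whose push-forward via $\delta_\shH$ lies in $\shA^{(1)}_0\boxtimes\shA^{(2)}_0\boxtimes D(\PP(V^\vee))$) with Def.~\ref{def:nHPD} identifies it with $\shC$. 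To be safe, one should cross-check this against Lem.~\ref{lem:sod:nH} applied to $n=2$, whose components in the case $I=\{1\},\{2\}$ are exactly the images of $\shA^{(k),\hpd}\boxtimes\shA^{(j)}_{\geq 1}(\bullet H)$ that appear in the product of the two HPD decompositions.

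For the first equivalence, the plan is to work on $\widetilde{P}(X_1,X_2)$ and use the key geometric picture from the introduction: the generalized universal hyperplane $\shH_{P(X_1,X_2)}$ maps birationally to $X_1\times X_2\times\PP(V^\vee)$, is an isomorphism away from $\shH(X_1,X_2)$, and is a $\PP^1$-bundle $\PP(N_\delta^\vee)$ along the latter. Categorically, I will form the universal hyperplane category $\shH_{\widetilde{P}(\shA^{(1)},\shA^{(2)})}$ and, using Thm.~\ref{thm:hyp}, Thm.~\ref{thm:bl} and Orlov's projective bundle formula (Thm.~\ref{thm:app:pr_bd}), decompose it into pieces pulled back from $(\shA^{(1)}\boxtimes\shA^{(2)})\boxtimes D(\PP(V^\vee))$ and pieces supported on the $\PP^1$-bundle over $\shH(X_1,X_2)$. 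By definition $(\shA^{(1)}\ccup\shA^{(2)})^\hpd$ is the subcategory of $\shH_{\widetilde{P}(\shA^{(1)},\shA^{(2)})}$ whose push-forward lands in the Lefschetz center of $\shA^{(1)}\ccup\shA^{(2)}$ (described in Prop.~\ref{prop:lef:cat_join}); the ``ambient'' pieces removed in Step~3 of the definition of the categorical join, together with the Lefschetz conditions along $\widetilde{E_1},\widetilde{E_2}$, should precisely cut out the $\PP^1$-bundle pieces in the decomposition, and identify the non-ambient part with the double HPD category $\shC$.

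The serious obstacle is in the first equivalence: keeping careful track of the many refined pieces introduced by the blow-up along $X_1\times_{\PP(V)}X_2$ and the three-step construction of the categorical join, and checking that the cancellations really produce the single category $\shC$ rather than residual contributions. The cleanest way I foresee is to avoid computing the whole equivalence by hand and instead construct a natural $\PP(V^\vee)$-linear functor $\Phi$ from $(\shA^{(1)}\ccup\shA^{(2)})^\hpd$ to $\shC$ and its left adjoint using the Fourier--Mukai kernel associated to the graph of $\shH_{P(X_1,X_2)}\to X_1\times X_2\times\PP(V^\vee)$; then, using Lem.~\ref{lem:hpdcat}, verify directly that both $\Phi$ and $\Phi^*$ induce mutually inverse equivalences on Lefschetz centers, which by Lem.~\ref{lem:equiv-lef} upgrades to an equivalence of Lefschetz categories. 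Composing with the second equivalence then yields the theorem.
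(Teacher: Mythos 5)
Your treatment of the second equivalence $\shC \simeq \shA^{(1),\hpd}\boxtimes_{\PP(V^\vee)}\shA^{(2),\hpd}$ is exactly the paper's Step 2: the identification $\shH(\shA^{(1)},\shA^{(2)}) = \shH_{\shA^{(1)}}\boxtimes_{\PP(V^\vee)}\shH_{\shA^{(2)}}$ via Lem.~\ref{lem:tensor:bc}, then Prop.~\ref{prop:product} applied to the HPD decompositions of Lem.~\ref{lem:sodH}, matched term-by-term against Lem.~\ref{lem:sod:nH} for $n=2$. No issues there.

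For the first equivalence there is a genuine gap: you never invoke Thm.~\ref{thm:app:HPDbs}, and that theorem is the pivot of the paper's argument. The categorical join is, by its three-step construction (Lem.~\ref{lem:bl:join} and Lem.~\ref{lem:join:ref-bl}, which are Lem.~\ref{lem:app:ref-bl} applied to $\shJ$ and $L = -\Delta_{V^\vee}$), precisely the refined blow-up $\rBlA$ of the categorical ruled join $\shJ$ over $\PP(V\oplus V)$; Thm.~\ref{thm:app:HPDbs} then identifies $(\shA^{(1)}\ccup\shA^{(2)})^\hpd$ outright with the subcategory $\shD$ of the generalized universal hyperplane $\shH_{P(\shA^{(1)},\shA^{(2)})}$ of the \emph{un-blown-up} abstract join, characterized by $\delta_{\shH_P *}D \in p^*(\shA^{(1)}_0\boxtimes\shA^{(2)}_0)\boxtimes D(\PP(V^\vee))$. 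This sidesteps entirely the universal hyperplane of $\widetilde{P}(X_1,X_2)$ over $\PP(V)$ that you propose to decompose by hand; the ``serious obstacle'' you flag is real, and your plan there amounts to re-proving Thm.~\ref{thm:app:HPDbs} inline. Your fallback---a Fourier--Mukai functor $\Phi$ plus a Lefschetz-center comparison via Lem.~\ref{lem:equiv-lef}---is circular: Lem.~\ref{lem:equiv-lef} presupposes that \emph{both} sides are Lefschetz categories with identified centers, but in the non-disjoint case the Lefschetz structure of $\shC$ is established in the paper only as a \emph{consequence} of the main theorem (Thm.~\ref{cor:n-HPD} is proved from Thm.~\ref{thm:join-nHPD}, and its a priori verification requires the disjointness condition $(D_n)$, under which $\shH \to X_1\times X_2$ is a projective bundle---i.e.\ exactly the case already treated in \cite{KP18}). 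The paper's mechanism needs no center comparison at all: $j_*\,\rho^*$ is fully faithful by the decomposition (\ref{eqn:Hyp:H_P}) from Thm.~\ref{thm:hyp}; $\shD$ lands in its image because $\Hom(\pi^*A\otimes\sO(1),D)=0$, which reduces via adjunction to $p_*\sO(-1)=0$ for the $\PP^1$-bundle $p$; and since the ambient square of (\ref{diagram:cayley}) is flat, hence Tor-independent, one has $\delta_{\shH_P *}\, j_*\,\rho^*\,C = (p\times\Id)^*\,\delta_{\shH *}\,C$, so the defining conditions (\ref{eqn:char:C}) and (\ref{eqn:char:D}) correspond under $j_*\,\rho^*$, giving $\shC\simeq\shD$ directly. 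To repair your proof, import Thm.~\ref{thm:app:HPDbs} for the first reduction and replace the Lefschetz-center check by this characterization match.
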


\begin{remark} This theorem together with Lem. \ref{lem:hpdcat} implies that the  fiber products $\shA^{(1)} \boxtimes_{\PP(V)} \shA^{(2)}$ and $\shA^{(1), \hpd} \boxtimes_{\PP(V^\vee)}  \shA^{(2), \hpd}$ have a common nontrivial component, denoted by $\sE$ in Prop. \ref{prop:lef:cat_join}; see also Rmk. \ref{rmk:Plucker}. Our main theorem hence provides another proof of the nonlinear HPD theorem of \cite{JLX17, KP18}. 
\end{remark}

\begin{remark} Notice that we do not require smoothness in the definition of Lefschetz categories. In fact our theorem implies the following are equivalent: (i) $\shA^{(1)} \ccup \shA^{(2)}$ is smooth, (ii) $\shA^{(1)} \boxtimes_{\PP(V)} \shA^{(2)}$ is smooth, (iii) $\shA^{(1), \hpd} \boxtimes_{\PP(V^\vee)}  \shA^{(2), \hpd}$ is smooth, (iv) $\shC$ is smooth.
\end{remark}

\begin{example}\label{eg:Gr-dual} In the situation of Example \ref{eg:Gr(2,5)} where  $X_1 = \Gr(2,5)\subset \PP^9$, $X_2 = g \cdot \Gr(2,5)$, and $g\in {\rm PGL}(10,\CC)$ is generic, our main theorem implies that the categorical join
	$$X_1 \ccup X_2 = \Gr(2,5) \ccup (g \cdot \Gr(2,5)) = \big \langle \sE, \sE(H), \ldots, \sE(8H)\big \rangle,$$
where $\sE = D(X_1 \cap X_2)$, is HPD to the intersection $X_1^\vee \cap X_2^\vee$ (which is another Calabi-Yau threefold non-birational to $X_1 \cap X_2$, see \cite{OR, BCP}). By Lem. \ref{lem:hpdcat}, this implies that $D(X_1 \cap X_2) \simeq D(X_1^\vee \cap X_2^\vee)$. Therefore our main theorem provides another proof (cf. \cite{JLX17, KP18}) of the fact that the two Calabi-Yau threefolds are derived equivalent.
\end{example}

\begin{proof}
As mentioned in the introduction, there are two geometric pictures which allow us the prove the desired equivalences.

\medskip\noindent\emph{Step $1$.}  The generalized universal hyperplane $\shH_{P(X_1,X_2)}$ of the abstract join $P(X_1,X_2) \to \PP(V\oplus V) \dasharrow \PP(V)$ and the double universal hyerplane $\shH(X_1,X_2)$ are connected by the geometry described in \S \ref{sec:HPDbs:hyp} (which is a special case of  \S \ref{sec:hyp}).

	 Concretely, by \S \ref{sec:nHPD} the double universal hyperplane $\shH(X_1,X_2)$ is the zero locus of a canonical section $\sigma$ of the rank $2$ vector bundle $E= \sO(H_1+H') \oplus \sO(H_2 + H')$ on $X_1 \times X_2 \times \PP(V^\vee)$, 
		$$\delta_{\shH} \colon \shH(X_1,X_2) = Z(\sigma) \hookrightarrow X_1 \times X_2 \times \PP(V^\vee),$$
	where $H'$ is the hyperplane class of $\PP(V^\vee)$ and $H_k$ is the hyperplane class of $X_k \to \PP(V)$ as usual, $k=1,2$. On the other hand, under the identification
	$$H^0 (X_1 \times X_2 \times \PP(V^\vee), \sO(H_1+ H')\oplus \sO(H_2+H')) = H^0(\PP(E), \sO_{\PP(E)}(1) \otimes \sO(H')),$$
	the section $\sigma$ corresponds canonically to a section $\tilde{\sigma}$ of the line bundle $\sO_{P(X_1,X_2)}(1) \boxtimes \sO(H')$ on $\PP(\sE) = P(X_1,X_2) \times \PP(V^\vee)$. The zero locus of $\tilde{\sigma}$ is by definition the {\em generalized universal hyperplane} for the abstract join $P(X_1,X_2) \dasharrow \PP(V)$ of  \S \ref{sec:HPDbs:hyp},
		$$\delta_{\shH_P} \colon \shH_{P(X_1,X_2)} : = Z(\tilde{\sigma}) \hookrightarrow P(X_1, X_2) \times \PP(V^\vee).$$
The situation is illustrated in the following diagram, with morphisms as indicated:
\begin{equation}
	\label{diagram:cayley}
	\begin{tikzcd}[row sep= 2.6 em, column sep = 2.6 em]
	\PP(N_{\delta_{\shH}}^\vee) \ar{d}[swap]{\rho} \ar[hook]{r}{j} & \shH_{P(X_1,X_2)}  \ar{d}{\pi}  \ar[hook]{r}{\delta_{\shH_{P}}} & P(X_1,X_2) \times \PP(V^\vee) \ar{dl}{p \,\times \,\Id_{\PP(V^\vee)}}\\
	\shH(X_1,X_2) = Z(\sigma) \ar[hook]{r}{\delta_{\shH}}         &  X_1 \times X_2 \times \PP(V^\vee), 
	\end{tikzcd}	
\end{equation}

Back to categorical situation. Apply the above constructions to $\shA^{(k)} \subset D(X_k)$, $k=1,2$ as \S \ref{sec:HPDbs:hyp}. Then by Thm. \ref{thm:hyp}, there is a semiorthogonal decomposition
	\begin{equation}
	\label{eqn:Hyp:H_P}
	\shH_{P(\shA^{(1)}, \shA^{(2)})}  = \langle  j_* \, \rho^* \, \shH(\shA^{(1)}, \shA^{(2)}),  ~~\pi^* (\shA^{(1)} \boxtimes \shA^{(2)} \boxtimes D(\PP(V^\vee))) \otimes \sO_{P(X_1,X_2)}(1) \rangle.	
	\end{equation}	
By definition Def. \ref{def:nHPD} of $n$-HPD category (see also Rmk. \ref{rmk:nHPD-char}), the double HPD category $\shC \subset \shH(\shA^{(1)}, \shA^{(2)}) \subset D(\shH(X_1,X_2))$ is characterized by: 
	\begin{equation}
	\label{eqn:char:C}
	\shC =  \{C  \mid \delta_{{\shH}*} \,C \in (\shA^{(1)}_0 \boxtimes \shA^{(2)}_0) \boxtimes D(\PP(V^\vee))\} \subset \shH(\shA^{(1)}, \shA^{(2)}).
	\end{equation}
By Thm. \ref{thm:app:HPDbs}, the HPD category $(\shA^{(1)} \ccup \shA^{(2)})^\hpd$ of the categorical join $\shA^{(1)} \ccup \shA^{(2)}$ is naturally $\PP(V^\vee)$-linear equivalent to the subcategory $\shD \subset \shH_{P(\shA^{(1)}, \shA^{(2)})} \subset D(\shH_{P(X_1,X_2)})$
	\begin{equation}
	\label{eqn:char:D}
	\shD = \{D \mid \delta_{\shH\,*} \,D \in p^*( \shA^{(1)}_{0} \boxtimes \shA^{(2)}_{0}) \boxtimes D(\PP(V^\vee))\} \subset \shH_{P(\shA^{(1)}, \shA^{(2)})}.
	\end{equation}
We claim the fully faithful functor $j_* \, \rho^*$ of (\ref{eqn:Hyp:H_P}) induces a $\PP(V^\vee)$-linear equivalence:
	$$j_* \, \rho^* \colon \shC \simeq  \shD.$$
First we show $\shD$ is right orthogonal to the second component of (\ref{eqn:Hyp:H_P}). In fact for any $A \in \shA^{(1)} \boxtimes \shA^{(2)} \boxtimes D(\PP(V^\vee))$, $D \in \shD$, denote by $\sO_{P(X_1,X_2)}(1) = \sO(1)$, we have
	\begin{align*}
	& \Hom_{\shH_{P(\shA^{(1)}, \shA^{(2)})}} (\pi^* A \otimes \sO(1), D) = \Hom_{\shA^{(1)} \boxtimes \shA^{(2)} \boxtimes D(\PP(V^\vee)} \big(A, \pi_*( D \otimes \sO (-1)) \big) \\
	= & \Hom_{\shA^{(1)} \boxtimes \shA^{(2)} \boxtimes D(\PP(V^\vee)} \big(A, (p \times \Id)_* \circ \delta_{\shH_P *} (D \otimes \sO (-1)) \big)
	\end{align*}
Since $\delta_{\shH_{P \,*}} D \in p^*(\shA^{(1)}_{0} \boxtimes \shA^{(2)}_{0})  \boxtimes D(\PP^\vee)$, we may assume it is of the form $\delta_{\shH_P\,*} D = p^* D_0 \otimes F = (p \times \Id)^* (D_0 \otimes F)$ for $D_0 \in \shA^{(1)} \boxtimes \shA^{(2)}$ and $F \in D(\PP(V^\vee))$. Therefore
	\begin{align*}
	& \Hom_{\shH_{P(\shA^{(1)}, \shA^{(2)})}} (\pi^* A \otimes \sO(1), D)  \\
	 =& \Hom_{\shA^{(1)} \boxtimes \shA^{(2)} \boxtimes D(\PP(V^\vee)}  \big(A, (p \times \Id)_* \, ((p \times \Id)^*(D_0 \otimes F) \otimes \sO(-1)) \big) \\
	=&  \Hom_{\shA^{(1)} \boxtimes \shA^{(2)} \boxtimes D(\PP(V^\vee)}  \big(A, D_0 \otimes F \otimes (p_*\sO(-1)) \big)
 = 0	\end{align*}
since $p_* \sO(-1) = 0$ as $p$ is a $\PP^{1}$-bundle. Therefore $\shD$ is orthogonal to the second component of (\ref{eqn:Hyp:H_P}), and hence $\shD \subset j_* \, \rho^* \, \shH(\shA^{(1)}, \shA^{(2)})$ holds.

Next, since the ambient square in (\ref{diagram:cayley}) is Tor-independent (in fact, it is a flat base-change), we have for any $C \in \shH(\shA^{(1)}, \shA^{(2)}) \subset D(\shH(X_1,X_2)) $,
	$$ \delta_{\shH_P *} \, j_* \, \rho^* \, C = (p \times \Id)^* \, \delta_{\shH *} \, C.$$
Compare the characterizations (\ref{eqn:char:C}) and (\ref{eqn:char:D}), it is clear that for $C \in \shH(\shA^{(1)}, \shA^{(2)})$, $C \in \shC$ if and only if $ j_* \, \rho^* \, C \in \shD$. Therefore the claim is proved, i.e. we have $\PP(V^\vee)$-linear equivalences:
		$$\shC \simeq \shD \simeq (\shA^{(1)} \ccup \shA^{(2)} )^\hpd.$$

\medskip\noindent\emph{Step $2$.}  The relation between $\shC$ and the fiber product of HPDs are reflected in the geometric fact that $\shH(X_1,X_2) $ is the fiber product of $\shH_{X_1}$ and $\shH_{X_2}$ over $\PP(V^\vee)$. Let us first consider the universal case when $X_1 = X_2 = \PP(V)$. Then $\shH_{X_k} = \shH_{\PP(V)} = Q \subset \PP(V) \times \PP(V^\vee)$ is the universal quadric, and $\shH(X_1, X_2) = \shH(\PP(V), \PP(V)) = Q_2$ is the double universal hyperplane. Then $\shH(X_1, X_2) = \shH_{X_1} \times_{\PP(V^\vee)} \shH_{X_2}$, and the fiber square:
	$$
	\begin{tikzcd}
	Q_2 = \shH(\PP(V), \PP(V)) \ar{r} \ar{d}&\shH_{X_1} \ar{d} \\
	\shH_{X_2} \ar{r}	& \PP(V^\vee),
	\end{tikzcd}
	$$
 is Tor-independent. Therefore by  Prop. \ref{prop:product}, $D(Q_2) = D(\shH_{X_1}) \boxtimes_{\PP(V^\vee)} D(\shH_{X_2})$. In general for categories $\shA^{(k)}$'s, by definition (\ref{eq:def:n-Hyp}) and Lem. \ref{lem:tensor:bc}, we have natural equivalences: 
	\begin{align*}
	\shH(\shA^{(1)},\shA^{(2)}) &= (\shA^{(1)} \boxtimes \shA^{(2)}) \boxtimes_{\PP(V) \times \PP(V)} \big( D(\shH_{X_1}) \boxtimes_{\PP(V^\vee)} D(\shH_{X_2}) \big)  \\
	& = (\shA^{(1)} \boxtimes_{\PP(V)} D(\shH_{X_1})) \boxtimes_{\PP(V^\vee)}  (\shA^{(2)} \boxtimes_{\PP(V)} D(\shH_{X_2})) 
	 =  \shH_{\shA^{(1)}} \boxtimes_{\PP(V^\vee)} \shH_{\shA^{(2)}}.
	\end{align*}
By Lem. \ref{lem:sodH}, the universal hyperplane $\shH_{\shA^{(k)}}$, admits $\PP(V^\vee)$-linear  decompositions
	$$\shH_{\shA^{(k)}} = \big \langle \shA^{(k),\hpd}, ~~\shA^{(k)}_1 (H) \otimes D(\PP(V^\vee)), \ldots, \shA^{(k)}_{m_k-1}((m_k-1)H) \otimes D(\PP(V^\vee))\big\rangle, \qquad k=1,2.$$
Therefore by Prop. \ref{prop:product}, the category $\shH(\shA^{(1)},\shA^{(2)}) = \shH_{\shA^{(1)}} \boxtimes_{\PP(V^\vee)} \shH_{\shA^{(2)}}$ is equipped with a $\PP(V^\vee)$-linear semiorthogonal decomposition induced by exterior products by components of $\shH_{\shA^{(k)}}$. 
Then it is direct to see the components from exterior products of
	$$(\shA^{(1)}_{i_1} (i_1 H) \otimes D(\PP(V^\vee))) \boxtimes_{\PP(V^\vee)}  \shA^{(2),\hpd} \quad \text{and} \quad (\shA^{(2)}_{i_2} (i_2 H) \otimes D(\PP(V^\vee))) \boxtimes_{\PP(V^\vee)}  \shA^{(1),\hpd}$$
for $i_k =1,\ldots, m_k-1$, coincide with the image of (I) and (II) of the proof of Lem. \ref{lem:sod:nH}, and
	$$(\shA^{(1)}_{i1} (i_1 H) \otimes D(\PP(V^\vee))) \boxtimes_{\PP(V^\vee)}  (\shA^{(2)}_{i_2} (i_2 H) \otimes D(\PP(V^\vee))) $$
coincide with the image of (III). Comparing the decomposition from $\shH_{\shA^{(1)}} \boxtimes_{\PP(V^\vee)} \shH_{\shA^{(2)}}$ with Lem. \ref{lem:sod:nH}, we see there is $\PP(V^\vee)$-linear equivalence $\shA^{(1), \hpd} \boxtimes_{\PP(V^\vee)} \shA^{(2), \hpd} \simeq \shC$.
\end{proof}

Now we consider $n$ Lefschetz categories $\shA^{(k)} \subset D(X_k)$ of length $m_k$, where $X_k \to \PP(V)$ are smooth varieties, $k=1,\ldots,n$, where $n \ge 2$ is a fixed integer. For simplicity we assume that for every subset $I \subset \{1, 2,\ldots, n\}$, the fiber product $X_{I}$ of all $X_{i}, i\in I$ over $\PP(V)$ is {\em smooth} of expected dimension. We inductively define the {\em categorical join} of $\shA^{(k)}$, $k=1,\ldots,n$, by:
	$$\shA^{(1)} \ccup \shA^{(2)}  \ccup \cdots  \ccup \shA^{(n)} : = \big( ((\shA^{(1)} \ccup \shA^{(2)} ) \ccup \shA^{(3)}) \ccup \cdots  \big) \ccup \shA^{(n)}.$$
	
\begin{theorem}\label{thm:join-nHPD} Under the smoothness assumption of all intersections as above, the category $\underline{{\rm Lef}}_{/\PP(V)}$ of {\em smooth} proper $\PP(V)$-linear Lefschetz categories is closed under the operation of taking categorical join:
	$$  \ccup  \colon \underline{{\rm Lef}}_{/\PP(V)} \times \underline{{\rm Lef}}_{/\PP(V)} \to \underline{{\rm Lef}}_{/\PP(V)}, \qquad (\shA^{(1)} , \shA^{(2)}) \mapsto \shA^{(1)} \ccup \shA^{(2)},$$
 and the operation of taking fiber product (in the sense of noncommutative geometry):
 	$$  \boxtimes_{\PP(V)}  \colon \underline{{\rm Lef}}_{/\PP(V)} \times \underline{{\rm Lef}}_{/\PP(V)} \to \underline{{\rm Lef}}_{/\PP(V)}, \qquad (\shA^{(1)} , \shA^{(2)}) \mapsto \shA^{(1)}  \boxtimes_{\PP(V)} \shA^{(2)}.$$
These two monoidal operations are {\em commutative} and {\em associative}. Moreover, they are {\em dual} operations under the reflexive correspondence of HPD
	$$ (-)^\hpd \colon  \underline{{\rm Lef}}_{/\PP(V)}  \to \underline{{\rm Lef}}_{/\PP(V^\vee)}, \qquad \shA \mapsto \shA^\hpd.$$
Furthermore, let $\shC$ be the $n$-HPD category of $\shA^{(k)}$, $k=1,\ldots, n$, then there are a $\PP(V^\vee)$-linear equivalences of {\em smooth} Lefschetz categories:
	$$(\shA^{(1)} \ccup \shA^{(2)}  \ccup \cdots  \ccup \shA^{(n)} )^\hpd \simeq \shC \simeq  \shA^{(1), \hpd} \boxtimes_{\PP(V^\vee)}  \shA^{(2), \hpd} \boxtimes_{\PP(V^\vee)} \cdots  \boxtimes_{\PP(V^\vee)} \shA^{(n), \hpd}.$$
\end{theorem}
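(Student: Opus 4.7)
My plan is to reduce everything to the binary case (Theorem~\ref{thm:main}) by induction on $n$, using associativity of $\boxtimes_{\PP(V)}$ (Lem.~\ref{lem:ass:tensor}) as the inductive glue. The base case $n=2$ is exactly Theorem~\ref{thm:main}. For the inductive step, writing $\shA^{(\bullet,n-1)} := \shA^{(1)} \ccup \cdots \ccup \shA^{(n-1)}$, I would compute
\begin{align*}
(\shA^{(1)} \ccup \cdots \ccup \shA^{(n)})^\hpd
&= (\shA^{(\bullet, n-1)} \ccup \shA^{(n)})^\hpd \\
&\simeq (\shA^{(\bullet,n-1)})^\hpd \boxtimes_{\PP(V^\vee)} \shA^{(n),\hpd} \\
&\simeq \big(\shA^{(1),\hpd} \boxtimes_{\PP(V^\vee)} \cdots \boxtimes_{\PP(V^\vee)} \shA^{(n-1),\hpd}\big)  \boxtimes_{\PP(V^\vee)} \shA^{(n),\hpd},
\end{align*}
where the first equivalence is Theorem~\ref{thm:main} and the second is the inductive hypothesis. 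Associativity of $\boxtimes_{\PP(V^\vee)}$ (Lem.~\ref{lem:ass:tensor}) collapses this to the desired $n$-fold fiber product. For the identification with the $n$-HPD category $\shC$, I would directly generalize Step~2 of the proof of Theorem~\ref{thm:main}: the identity $\shH(X_1,\dots,X_n) = \shH_{X_1} \times_{\PP(V^\vee)} \cdots \times_{\PP(V^\vee)} \shH_{X_n}$, combined with Lem.~\ref{lem:tensor:bc} (compatibility) and Prop.~\ref{prop:product} (products of SODs), gives a $\PP(V^\vee)$-linear decomposition of $\shH(\shA^{(1)},\dots,\shA^{(n)})$ whose ``deepest'' piece matches the $n$-HPD category described by Lem.~\ref{lem:sod:nH}; comparing terms shows $\shC \simeq \shA^{(1),\hpd} \boxtimes_{\PP(V^\vee)} \cdots \boxtimes_{\PP(V^\vee)} \shA^{(n),\hpd}$.

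Next I would establish the closedness, commutativity, and associativity claims. Commutativity is immediate on both sides: the definition of $\shA \boxtimes_{\PP(V)} \shB = \shA_Y \cap \shB_X$ is manifestly symmetric in $(\shA,\shB)$, and the construction of $\shA \ccup \shB$ via $P(X_1,X_2)$ and its blow-up is symmetric in the two factors. Associativity of $\boxtimes_{\PP(V)}$ is exactly Lem.~\ref{lem:ass:tensor}. Associativity of $\ccup$ I would derive by transport of structure through HPD: given the binary case of Theorem~\ref{thm:main} and reflexivity of HPD (Lem.~\ref{lem:hpdcat}),
\begin{align*}
\big((\shA \ccup \shB) \ccup \shC\big)^\hpd
&\simeq (\shA^\hpd \boxtimes_{\PP(V^\vee)} \shB^\hpd) \boxtimes_{\PP(V^\vee)} \shC^\hpd \\
&\simeq \shA^\hpd \boxtimes_{\PP(V^\vee)} (\shB^\hpd \boxtimes_{\PP(V^\vee)} \shC^\hpd)
\simeq \big(\shA \ccup (\shB \ccup \shC)\big)^\hpd,
\end{align*}
and applying $(-)^\hpd$ once more yields the associator. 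Closedness of $\ccup$ on $\underline{\rm Lef}_{/\PP(V)}$ is Prop.~\ref{prop:lef:cat_join}, while closedness of $\boxtimes_{\PP(V)}$ then follows from closedness of $\ccup$ together with Theorem~\ref{thm:main}, because $\shA^{(1)} \boxtimes_{\PP(V)} \shA^{(2)}$ is identified with the HPD of the categorical join of $(\shA^{(1),\hpd}, \shA^{(2),\hpd})$, which is itself Lefschetz. Smoothness in every case reduces to smoothness of the various intersections $X_I$ via the equivalences of Remark~2 following Theorem~\ref{thm:main}, so the standing hypothesis transfers through all four categories (join, fiber product, $n$-HPD, iterated HPD fiber product).

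The main obstacle I anticipate is bookkeeping the partial-order conventions when iterating the decomposition of Lem.~\ref{lem:sod:nH}: one must check that the semiorthogonal decompositions obtained by repeatedly applying Prop.~\ref{prop:product} to the factorization $\shH(\shA^{(1)},\dots,\shA^{(n)}) \simeq \shH_{\shA^{(1)}} \boxtimes_{\PP(V^\vee)} \cdots \boxtimes_{\PP(V^\vee)} \shH_{\shA^{(n)}}$, sliced according to where each factor lands in its own HPD decomposition (``deepest piece'' $\shA^{(k),\hpd}$ versus the Lefschetz shifts $\shA^{(k)}_{i_k}(i_k H)$), is compatible with the stratification $\{I \subseteq \{1,\dots,n\}\}$ that indexes the subcategories in Lem.~\ref{lem:sod:nH}. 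This is essentially a combinatorial matching of subset-indexed components, entirely analogous to Step~2 of Theorem~\ref{thm:main} but now iterated; once done, the identification $\shC \simeq \boxtimes_{\PP(V^\vee), k} \shA^{(k),\hpd}$ gives a second, independent proof of the main equivalence that does not pass through the inductive reduction, and thereby verifies its consistency under the two possible bracketings of the $n$-fold join.
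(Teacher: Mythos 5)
Your proposal is correct and follows essentially the same route as the paper: its proof likewise obtains commutativity from the definitions, associativity of $\ccup$ from Lem.~\ref{lem:ass:tensor} combined with Thm.~\ref{thm:main}, and the identification of the $n$-HPD category $\shC$ with the iterated fiber product of HPDs from Step~2 of the proof of Thm.~\ref{thm:main} together with induction (the direct $n$-fold generalization of Step~1 being recorded in Rmk.~\ref{rmk:cayleyn}). The details you add beyond the paper's terse argument — closedness of $\boxtimes_{\PP(V)}$ via duality with $\ccup$, and the smoothness bookkeeping through the four equivalent categories — are left implicit there but are consistent with it.
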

\begin{proof} The commutativity of categorical joins follows from definition, and the associativity of categorical joins follows from the associativity of exterior tensor products Lem. \ref{lem:ass:tensor} and the main theorem \ref{thm:main}. It remains to show the $n$-HPD category $\shC$ is equivalent to the fiber products of HPD. This follows from Step $2$ of proof of Thm. \ref{thm:main} and induction.
\end{proof}

\begin{remark} \label{rmk:cayleyn} The arguments in the proof of Thm. \ref{thm:main} can be directly applied to prove Thm. \ref{thm:join-nHPD}. In fact, a similar commutative diagram of (\ref{diagram:cayley}) holds:
	\begin{equation}
	\label{diagram:cayleyn}
	\begin{tikzcd}[row sep = 2.6 em, column sep = 2.6 em]
	\PP(N_{\delta_{\shH}}^\vee) \ar{d}[swap]{\rho} \ar[hook]{r}{j} & \shH_{P(X_1,\ldots, X_n)}  \ar{d}{\pi}  \ar[hook]{r}{\delta_{\shH_{P}}} & P(X_1, \ldots, X_n) \times \PP(V^\vee) \ar{dl}{p \,\times \,\Id_{\PP(V^\vee)}}\\
	\shH(X_1,\ldots, X_n) \ar[hook]{r}{\delta_{\shH}}         &  X_1 \times  \cdots \times X_n \times \PP(V^\vee), 
	\end{tikzcd}	
\end{equation}
and the ambient square is Tor-independent as it is a flat base-change. One can show similarly that the functor $(j_* \, \rho^*)|_{\shC}$ induces $\PP(V^\vee)$-linear equivalence $\shC \simeq \shD$, where $\shD \simeq(\shA^{(1)} \ccup  \cdots  \ccup \shA^{(n)} )^\hpd$ is the subcategory of $\shH_{P(\shA^{(1)}, \ldots, \shA^{(n)})}$ defined as in Thm. \ref{thm:app:HPDbs}.
\end{remark}

\subsection{Categorical joins and HPDs in disjoint situation} \label{sec:disjoint-sit} If $X_1,X_2 \to \PP(V)$ are disjoint $X_1 \times_{\PP(V)} X_2 = \emptyset$, then in Prop. \ref{prop:lef:cat_join}, we see that the components $\sE = \emptyset$ and the categorical join admits a simple description. We aim to generalize the disjoint situation to general $n$, and give concrete descriptions of various constructions in this situation. 

The varieties $X_k \to \PP(V)$, $k=1,2,\ldots, n$, are said to satisfy (disjoint) {condition $(D_n)$} if
\medskip
\begin{itemize}[leftmargin = 4em]
	\item[$(D_n)$ \quad] \label{cond:Dn} {\em For any $x_k \in X_k$, $k=1,\ldots, n$, the affine representatives $\hat{x}_k \in V$ of (the image of) $x_k$ in $\PP(V)$ are linearly independent, i.e. $\rank ({\rm span}\{\hat{x}_1, \ldots, \hat{x}_n\}) = n$.}
\end{itemize}
\medskip

For $n=2$, two varieties $X_1,X_2 \to \PP(V)$ satisfies $(D_2)$ if and only if they are disjoint; For $n=3$,  $X_k \to \PP(V)$, $k=1,2,3$, satisfies $(D_3)$ $\iff$ $X_1 \join X_2$ is disjoint from $X_3$ $\iff$ $X_2 \join X_3$ is disjoint from $X_1$ $\iff$ $X_1 \join X_3$ is disjoint from $X_2$; For $n\ge 3$ the condition $(D_n)$ can be described inductively in a similar manner as in the case $n=3$. 

\subsubsection{Categorical joins} \label{sec:disjoin:joins} Let $X_k \to \PP(V)$, $k=1,2,\ldots,n$ be smooth varieties that satisfy disjoint condition $(D_n)$, and $\shA^{(k)} \subset D(X_k)$ be $\PP(V)$-linear Lefschetz categories. Denote by $H_k$ the pullback of the hyperplane class of $\PP(V)$ along $X_k \to \PP(V)$ as usual. 

Although we have proved the associativity of categorical joins $\ccup$ in last section, it is not clear from definition why associativity holds. For example, the categorical join $(\shA^{(1)} \ccup \shA^{(2)}) \ccup  \shA^{(3)}$ is by definition a subcategory of the derived category of the iterated $\PP^1$-bundle:
		\begin{equation*}
		 \label{eq:P12,3}
		P_{(12)3} := \PP_{P_{12} \times X_3}(\sO(-H_{12}) \oplus \sO(-H_3)), \quad \text{where} ~P_{12} := \PP_{X_1 \times X_2}(\sO(-H_1) \oplus \sO(-H_2)),
		\end{equation*}
	and similarly $\shA^{(1)} \ccup (\shA^{(2)} \ccup \shA^{(3)})$ corresponds to the iterated $\PP^1$-bundle:
		 \begin{equation*}
		 \label{eq:P1,23}
		P_{1(23)} := \PP_{X_1 \times \rJ_{23}}(\sO(-H_{1}) \oplus \sO(-H_{23})), \quad \text{where} ~ P_{23} := \PP_{X_2 \times X_3}(\sO(-H_2) \oplus \sO(-H_3)).
		\end{equation*}
(Here $H_{12}$ and respectively $H_{23}$ denote the relative hyperplane classes of $P_{12}$ and $P_{23}$). The two spaces $P_{(12)3}$ and $P_{1(23)}$ are only birational to each other. However they both admit birational maps to $P(X_1,X_2,X_3)$, to be defined as follows.

\smallskip
For $X_k \to \PP(V)$, $k=1,2,\ldots, n$, we define their {\em abstract join} be:
	\begin{equation}
	\label{eq:pn}
	p_n \colon P(X_1, \ldots, X_n) := \PP_{X_1 \times \cdots \times X_n} \big(\bigoplus_{k=1}^n \sO(-H_k)\big) \to  X_1 \times \cdots \times X_n,
	\end{equation}
which is a $\PP^{n-1}$-bundle over $X_1 \times \cdots \times X_n$. 
We also use the notation 
	$$P_{(((12 )3 )\cdots n)} = P_{(((12 )3 )\cdots n)}(X_1,\ldots,X_n) \to X_1 \times \cdots \times X_n$$ 
to denote the {\em iterated $\PP^1$-bundle} over $X_k$, with order indicated by the bracket of the numbers on the subscript. For example, $P_{((12)3 )4}$ denotes the iterated $\PP^1$-bundle constructed by first taking $\PP^1$-bundle $P_{12} = \PP(\sO(-H_1) \oplus \sO(-H_2))$ over $X_1 \times X_2$, then taking $\PP^1$-bundle $P_{(12)3} = \PP(\sO(-H_{12}) \oplus \sO(-H_3))$ over $P_{12} \times X_3$, finally taking the $\PP^1$-bundle $P_{((12)3 )4} = \PP(\sO(-H_{(12)3}) \oplus \sO(-H_4))$ over $P_{(12)3} \times X_4$. 
It is clear from construction that every iterated $\PP^1$-bundle as above admits a birational morphism to $P(X_1,X_2,\ldots, X_n)$.

Notice that if $\{X_k \to \PP(V)\}_{k=1,\ldots,n}$ satisfies condition $(D_n)$, there is an evaluation map:
	$$ev_P \colon P(X_1, \ldots, X_n) \to \PP(V),$$
sending a point $[p_1:p_2:\ldots: p_n] \in \PP^{n-1}_{(x_1, x_2, \ldots,x_n)}$ on the fiber over $(x_1, x_2, \ldots,x_n)$ to the corresponding point $p_1x_1 + p_2x_2 +\cdots + p_n x_n \in \PP(V)$.

We can certainly apply the abstract join construction (and also iterated $\PP^1$-bundle construction) to $\PP(V)$-linear subcategories $\shA^{(k)} \subset D(X_k)$ by \S \ref{sec:app:proj_bd} and obtain $\PP^{n-1}$-bundle category $P(\shA^{(1)}, \ldots,\shA^{(n)})$ over $\shA^{(1)} \boxtimes \cdots \boxtimes \shA^{(n)}$. If condition $(D_n)$ is satisfied, then $P(\shA^{(1)}, \ldots,\shA^{(n)})$ is a $\PP(V)$-linear category, with $\PP(V)$-linear structure induced by evaluation map $ev_P$.

Next result shows that under condition $(D_n)$, there is a canonical representative for the categorical join and an explicit description of its Lefschetz structure.

\begin{theorem}\label{thm:min-rep} Let $X_k \to \PP(V)$, $k=1,\ldots,n$, be smooth varieties satisfying condition $(D_n)$, and $\shA^{(k)} \subset D(X_k)$ be Lefschetz categories. Then the categorical join $\shA^{(1)} \ccup \shA^{(2)}  \ccup \cdots  \ccup \shA^{(n)}$ is $\PP(V)$-linear equivalent to the following $\PP(V)$-linear subcategory:
	\begin{align}
	\label{eq:Jmin}
	\shJ := \langle \shJ_0, \shJ_1(H), \ldots, \shJ_{m-1} ((m-1) H)\rangle \subset P(\shA^{(1)}, \ldots,\shA^{(n)}),
	\end{align}
 where $H$ is the hyperplane class of $\PP(V)$, $m = \sum_{k=1}^n {m_k}$, and $\shJ_i := p_n^* \bar{\shJ}_i$ is the fully faithful image of $\bar{\shJ}_i$ under $p_n^*$ (where $p_n$ is the $\PP^{n-1}$-bundle morphism (\ref{eq:pn})), and 
	\begin{align}
	& \bar{\shJ}_{i}: = \left \langle  \bigotimes_{k=1}^n \foa_{i_k}^{(k)} 
	\ \left|  \
	\substack{ i_1 + \cdots + i_n \,\ge \, i+1 - n  \\ ~~\\ i_1, \ldots, i_n \,\ge \, 0.}
	 \right. \right\rangle   \ \subset \ \bigotimes_{k=1}^n \shA_0^{(k)}  \quad  i = 0, \ldots, m-1; \label{eqn:barJ>0}
	 \end{align}
 In particular, the Lefschetz center is given by $\shJ_0 := p^* (\bigotimes_{k=1}^n \shA_0^{(k)})$.
 
Furthermore, the above representative $\shJ$ of categorical join is {\em minimal} in the sense that, for any other representative of the categorical join, say for example
 	$$(((\shA^{(1)}  \ccup \shA^{(2)} ) \ccup \shA^{(3)} ) \ccup \cdots  ) \ccup \shA^{(n)}$$
	(as a subcategory of the iterated $\PP^1$-bundle $P_{(((12 )3 )\cdots n)}$), is the fully faithful image of $\shJ$ under the pullback along the birational contraction $P_{(((12 )3 )\cdots n)} \to P(X_1,\ldots, X_n)$. 
 \end{theorem}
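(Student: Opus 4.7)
The plan is to argue by induction on $n$. The base case $n=2$ is immediate: under $(D_2)$ the fiber product $X_1\times_{\PP(V)}X_2$ is empty, so the remark following Definition~\ref{defn:cat_joins} identifies $\shA^{(1)}\ccup\shA^{(2)}$ with $\shJ(\shA^{(1)},\shA^{(2)}) \subset P(\shA^{(1)},\shA^{(2)})$, and Lemma~\ref{lem:lef:abstrac-join} supplies the Lefschetz data claimed in \eqref{eq:Jmin}--\eqref{eqn:barJ>0}. The minimality clause is vacuous at this level.

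For the inductive step, I would assume the theorem for $n-1$ Lefschetz categories, with minimal representative $\shJ^{(n-1)} \subset P(\shA^{(1)},\ldots,\shA^{(n-1)})$. Condition $(D_n)$ ensures that the image of $\shJ^{(n-1)}$ under $ev_P\colon P(X_1,\ldots,X_{n-1})\to\PP(V)$ is disjoint from $X_n\subset\PP(V)$ (a relation $p_1\hat{x}_1+\cdots+p_{n-1}\hat{x}_{n-1}=\hat{x}_n$ would violate linear independence), so the already-established $n=2$ case applies to the pair $(\shJ^{(n-1)},\shA^{(n)})$ and gives $\shJ^{(n-1)}\ccup\shA^{(n)} = \shJ(\shJ^{(n-1)},\shA^{(n)})\subset P(\shJ^{(n-1)},\shA^{(n)})$. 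Extracting primary components of $\shJ^{(n-1)}$ from the inductive hypothesis yields $\foa_{i}^{\shJ^{(n-1)}} = p_{n-1}^*\langle\bigotimes_{k<n}\foa^{(k)}_{j_k} : \sum_{k<n}j_k = i-(n-2),\ j_k\ge 0\rangle$; plugging this into the $n=2$ formula $\bar{\shJ}_i = \langle\foa^{\shJ^{(n-1)}}_{i_1}\otimes\foa^{(n)}_{i_n} : i_1+i_n\ge i-1\rangle$ collapses, after reindexing $i_1 = \sum_{k<n}j_k + (n-2)$ and $i_n = j_n$, to exactly the bound $\sum_{k=1}^n j_k \ge i+1-n$ of~\eqref{eqn:barJ>0}.

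To descend from the iterated $\PP^1$-bundle $P(P(X_1,\ldots,X_{n-1}),X_n)$ to the canonical $P(X_1,\ldots,X_n)$, I would construct the natural morphism $\Pi_n\colon P(P(X_1,\ldots,X_{n-1}),X_n)\to P(X_1,\ldots,X_n)$ induced by the tautological map $\sO(-H_{1\cdots n-1})\oplus\sO(-H_n) \to \bigoplus_{k=1}^n\sO(-H_k)$ over $X_1\times\cdots\times X_n$, and verify, using $(D_n)$ to exclude a base locus, that $\Pi_n$ is a regular birational contraction (for $n=3$ it is the fibrewise blow-up of the $x_n$-section of $P(X_1,X_2,X_3)$). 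The projection formula then makes $\Pi_n^*$ fully faithful on $\shJ^{(n)}$, and a check of centers via Lemma~\ref{lem:equiv-lef} identifies $\Pi_n^*\shJ^{(n)}$ with $\shJ^{(n-1)}\ccup\shA^{(n)}$, finishing the equivalence of Lefschetz categories. Any alternative bracketing of the iterated join factors through a composition of $\Pi$-type contractions of the same shape, so the same pullback argument yields the minimality clause.

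The main obstacle will be the precise analysis of $\Pi_n$: confirming that under $(D_n)$ it is a regular birational morphism with a controllable exceptional locus, and that this exceptional locus is precisely annihilated by the Lefschetz truncation defining $\shJ^{(n)}$, so that $\Pi_n^*$ has no kernel on $\shJ^{(n)}$. I would handle this inductively, reducing at each step to a single blow-down of a smooth subvariety and invoking Orlov's blow-up formula (Theorem~\ref{thm:bl}) to certify that pullback is fully faithful on the complement of the exceptional components, then applying Lemma~\ref{lem:equiv-lef} to upgrade matching of Lefschetz centers into a full equivalence. A secondary subtlety is keeping primary-component bookkeeping compatible with the $p_n^*$ pullback across the inductive step; this is circumvented by always formulating the comparison as an equivalence of centers plus adjoint functors, which keeps the verification manageable.
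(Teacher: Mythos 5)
Your proposal is correct and follows essentially the same route as the paper: the paper likewise reduces to the $n=2$ Lefschetz structure of Lemma \ref{lem:lef:abstrac-join} (via Prop. \ref{prop:lef:cat_join}), identifies the Lefschetz components of every bracketed representative as the fully faithful images of the $\bar{\shJ}_i$ under pullback along the birational contractions from the iterated $\PP^1$-bundles to $P(X_1,\ldots,X_n)$ (your $\Pi_n$ is the paper's $\pi_1,\pi_2$, a single smooth blow-down with center $S_k$ of codimension $n-1$), and concludes by the center-plus-adjoints criterion of Lemma \ref{lem:equiv-lef}, handling general $n$ by the same induction you propose. The only organizational difference is that the paper first checks semiorthogonality of the sequence \eqref{eq:Jmin} directly by computing $p_{n!}\sO(tH)$ over the $\PP^{n-1}$-bundle, whereas in your argument this is subsumed in the fully faithfulness of the contraction pullbacks; also note that $\Pi_n$ is automatically a morphism (the bundle inclusion is fiberwise injective without invoking $(D_n)$, which is needed only for the $\PP(V)$-linear structure via $ev_P$), so your anticipated ``main obstacle'' dissolves.
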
	 

The proof will be given in appendix \S \ref{app:proof:thm:min-rep}.
	 
\begin{remark} Dually, one also has the following left Lefschetz decomposition:
 	\begin{align*}
	\shJ = \langle \shJ_{1-m}((1-m)H), \cdots, \shJ_{-1} (-H), \shJ_{0} \rangle 
\subset P(\shA^{(1)}, \ldots,\shA^{(n)}),
	\end{align*}
where the Lefschetz components are given by $\shJ_{-i} := p^* \bar{\shJ}_{-i}$, and 
	 \begin{align*}
	  &  \bar{\shJ}_{-i}: = \left \langle  \bigotimes_{k=1}^n \foa_{-i_k}^{(k)}  \ \left|  \
	\substack{ i_1 + \cdots + i_n \,\ge \, i+1 - n  \\ ~~\\ i_1, \ldots, i_n \,\ge \, 0.}
	 \right. \right\rangle   \  \subset \  \bigotimes_{k=1}^n \shA_0^{(k)} , \quad  i = 0, \ldots, m-1. \label{eqn:barJ<0}
	\end{align*}
\end{remark}

\medskip
\subsubsection{$n$-HPD category} Let $X_k \to \PP(V)$, $k=1,\ldots,n$, be smooth varieties. Then projection $\pi_{\shH} \colon \shH : = \shH(X_1,\ldots, X_n) \to X_1 \times \cdots \times X_n$ from the $n$-universal hyperplane is a {\em projective bundle} precisely when $X_k \to \PP(V)$'s satisfy disjoint condition $(D_n)$. In this case the $n$-HPD category also admits an explicit description.

\begin{theorem}\label{cor:n-HPD} Assume that $X_k \to \PP(V)$, $k=1,2,\ldots, n$ satisfy disjoint condition $(D_n)$. Let $\shA^{(k)} \subset D(X_k)$, $k=1,\ldots, n$, be Lefschetz categories with centers $\shA^{(k)}_0$, and HPD categories $\shA^{(k),\hpd}$. Denote by $m_k$ the length of $\shA^{(k)}$, and by $\ell_k$ the length of HPD $\shA^{(k),\hpd}$. Then the $n$-HPD category $\shC$ of $\shA^{(k)}$'s is a Lefschetz category of length $\ell := \sum_{k=1}^n \ell_k$, with Lefschetz center:
	$$\shC_0 := \gamma_{\shC}^* \  \pi_{\shH}^* \ ( \bigotimes_{k=1}^n \shA_0^{(k)}) ,$$
where $\gamma_\shC: \shC \hookrightarrow \shH(\shA^{(1)}, \ldots, \shA^{(n)})$ is the inclusion, and $\gamma_{\shC}^*$ is its left adjoint. Moreover, the functors $\gamma_{\shC}^* \  \pi_{\shH}^*$ and its right adjoint $\pi_{\shH *} \, \gamma_\shC$ induce mutually inverse equivalences:
	$$\gamma_{\shC}^* \circ  \pi_{\shH}^* \colon \bigotimes_{k=1}^n \shA_0^{(k)} \simeq \shC_0 \quad \text{and} \quad \pi_{\shH *} \circ \gamma_\shC \colon \shC_0 \simeq \bigotimes_{k=1}^n \shA_0^{(k)} .$$
\end{theorem}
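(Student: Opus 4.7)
The plan is to deduce Thm~\ref{cor:n-HPD} from the join--HPD duality of Thm~\ref{thm:join-nHPD} combined with Lem~\ref{lem:hpdcat} applied to the single Lefschetz category given by the categorical join $\shJ := \shA^{(1)} \ccup \cdots \ccup \shA^{(n)}$. This reduces the $n$-HPD center formula to the usual HPD center formula for $\shJ$, after unwinding through the explicit minimal representative of $\shJ$ from Thm~\ref{thm:min-rep}.

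First I would record two consequences of the disjoint condition $(D_n)$ to be used throughout. The projection $\pi_\shH \colon \shH(X_1,\ldots,X_n) \to X_1 \times \cdots \times X_n$ is a projective bundle of relative dimension $N - n - 1$: the fiber over $(x_1,\ldots,x_n)$ consists of hyperplanes in $\PP(V)$ containing every $x_k$, which under $(D_n)$ forms a linear subspace of $\PP(V^\vee)$ of codimension exactly $n$. Concretely, $\shH \cong \PP_{X_1 \times \cdots \times X_n}(\shK^\vee)$ where $\shK$ is the cokernel of the locally split injection $\bigoplus_k \sO(-H_k) \hookrightarrow V \otimes \sO$; by Thm~\ref{thm:app:pr_bd} the same holds at the level of the linear category $\shH(\shA^{(1)},\ldots,\shA^{(n)})$. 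Dually, $p_n \colon P(X_1,\ldots,X_n) \to X_1 \times \cdots \times X_n$ from \eqref{eq:pn} is a $\PP^{n-1}$-bundle, so $p_n^*$ is fully faithful with left-inverse $p_{n*}$. Moreover, $(D_n)$ makes the evaluation $ev_P \colon P(X_1,\ldots,X_n) \to \PP(V)$ a genuine morphism, so the generalized universal hyperplane $\shH_{P(X_1,\ldots,X_n)}$ appearing in diagram~\eqref{diagram:cayleyn} coincides with the \emph{bona fide} universal hyperplane of $P(X_1,\ldots,X_n) \to \PP(V)$, with structural projection $\pi_{\shH_P} \colon \shH_{P(X_1,\ldots,X_n)} \to P(X_1,\ldots,X_n)$ available for Lem~\ref{lem:hpdcat} applied to $\shJ$.

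By Thm~\ref{thm:join-nHPD} there is a $\PP(V^\vee)$-linear equivalence $\shC \simeq \shJ^\hpd$ of Lefschetz categories, and by Thm~\ref{thm:min-rep} the Lefschetz center of $\shJ$ is $\shJ_0 = p_n^*(\bigotimes_k \shA_0^{(k)})$. Applying Lem~\ref{lem:hpdcat} to the single Lefschetz category $\shJ$ yields mutually inverse equivalences
\[
\gamma_{\shJ^\hpd}^* \circ \pi_{\shH_P}^* \colon \shJ_0 \xrightarrow{\sim} \shJ_0^\hpd, \qquad \pi_{\shH_P *} \circ \gamma_{\shJ^\hpd} \colon \shJ_0^\hpd \xrightarrow{\sim} \shJ_0.
\]
Composing with $p_n^*$ and $p_{n*}$ identifies $\shJ_0^\hpd$ with $\bigotimes_k \shA_0^{(k)}$, and the length of $\shC$ as a Lefschetz category is read off from the Lefschetz structure of $\shJ^\hpd$ via the explicit form of $\shJ$ from Thm~\ref{thm:min-rep} combined with the reflexivity of HPD.

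The remaining step, and the main technical obstacle, is to transport these functorial identifications along the $\PP(V^\vee)$-linear equivalence $\Phi = (j_* \rho^*)|_\shC \colon \shC \xrightarrow{\sim} \shJ^\hpd$ from Step~$1$ of the proof of Thm~\ref{thm:main}, so that $\gamma_{\shJ^\hpd}^* \circ \pi_{\shH_P}^* \circ p_n^*$ corresponds on the $\shJ^\hpd$-side to $\gamma_\shC^* \circ \pi_\shH^*$ on the $\shC$-side, and dually $p_{n*} \circ \pi_{\shH_P *} \circ \gamma_{\shJ^\hpd}$ to $\pi_{\shH *} \circ \gamma_\shC$. The commutativity $p_n \circ \pi_{\shH_P} \circ j = \pi_\shH \circ \rho$ read off from diagram~\eqref{diagram:cayleyn} gives, at the ambient level, $\Phi \circ \pi_\shH^* = j_* j^* \circ \pi_{\shH_P}^* \circ p_n^*$; the discrepancy between $j_* j^*$ and the identity (a twist by the exceptional line bundle when $n=2$, and in general governed by the Koszul complex of the normal bundle $N_{\delta_\shH}$) must be shown to be killed by $\gamma_{\shJ^\hpd}^*$. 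This in turn follows from the generalized universal hyperplane SOD analogous to \eqref{eqn:Hyp:H_P}: the correction terms are concentrated in pieces of the form $\pi^*(\bigotimes_k \shA^{(k)} \boxtimes D(\PP(V^\vee))) \otimes \sO_{P(X_1,\ldots,X_n)}(k)$ with $k \ge 1$, which lie in the orthogonal of $\shJ^\hpd \subset j_* \rho^*\, \shH(\shA^{(1)},\ldots,\shA^{(n)})$ and hence in $\ker \gamma_{\shJ^\hpd}^*$. Once this compatibility of functors is in place, both the description $\shC_0 = \gamma_\shC^* \pi_\shH^*(\bigotimes_k \shA_0^{(k)})$ and the claimed mutually inverse equivalences follow at once.
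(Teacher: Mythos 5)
Your proposal is correct and takes essentially the same route as the paper, whose entire proof of Thm.~\ref{cor:n-HPD} is precisely the combination you use: the explicit Lefschetz center $\shJ_0 = p_n^*(\bigotimes_k \shA_0^{(k)})$ from Thm.~\ref{thm:min-rep}, the equivalence $\shC \simeq (\shA^{(1)} \ccup \cdots \ccup \shA^{(n)})^\hpd$ from Thm.~\ref{thm:join-nHPD}, and Lem.~\ref{lem:hpdcat} applied to the join. The transport step you elaborate via $(j_*\rho^*)|_{\shC}$, the compatibility $j_*\rho^*\,\gamma_{\shC}^* \simeq \gamma_{\shD}^*\, j_*\rho^*$, and the killing of the correction terms (which lie in the components $\pi^*(-)\otimes\sO(t)$, $t \ge 1$, hence in ${}^\perp\shD$) is exactly the mechanism the paper deploys explicitly in its proofs of Thm.~\ref{thm:main} and Thm.~\ref{thm:join-linear} and leaves implicit here, so you have merely filled in details the paper delegates to the reader.
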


\begin{proof} This follows directly from the combination of Lefschetz structure of categorical joins (Thm. \ref{thm:min-rep}), HPD between $\shC$ and joins (Thm. \ref{thm:join-nHPD}) and Lem. \ref{lem:hpdcat}. \end{proof}

Note that all Lefschetz components of $\shC$ can be explicitly described by combining Thm. \ref{thm:min-rep} and Thm. \ref{thm:join-nHPD}; we leave these details to readers. This theorem could also be proved by directly comparing the projective bundle structure of $\shH(\shA^{(1)}, \ldots, \shA^{(n)})$ over $\shA^{(1)} \boxtimes \ldots \boxtimes \shA^{(n)}$ with Lem. \ref{lem:sod:nH}; we satisfy ourselves here with the above proof using our main theorem.


\subsection{Categorical joins and HPDs in the splitting case.}
One particular situation when the disjoint condition $(D_n)$ is satisfied is when the supports of $X_k$ are inside pairwisely disjoint linear subspaces. More precisely,   let $X_k \to \PP(V)$ be smooth projective varieties, $k=1,\ldots, n$, then the splitting condition is to say that their maps to $\PP(V)$ factor through $X_k \to \PP(V_k) \subset \PP(V)$, and $V = \bigoplus_{k=1}^n V_k$. All results of last section hold in this case. However in the splitting case, for a given $k \in \{1,\ldots, n\}$ and a $\PP(V_k)$-Lefschetz category $\shA^{(k)}$, we can either form their HPD $(\shA^{(k)})_{/\PP(V)}^{\hpd}$ over the ambient space $\PP(V)$, or their HPD $(\shA^{(k)})_{/\PP(V_k)}^{\hpd}$ over the smaller space $\PP(V_k)$. In this section we will explore the relations between these HPDs, as well as their relations with categorical joins.

For $Y_k \to \PP(V^\vee)$, $k=1,\ldots,n$, we introduce the following ``dual" notations of (\ref{eq:pn}):
	\begin{equation} 
	\label{eq:qn}
	{q}_n \colon \check{P}(Y_1, \ldots, Y_n): = \PP_{Y_1 \times \cdots \times Y_n} ~(\bigoplus_{k=1}^n \sO(-H_k')) \to Y_1 \times \cdots \times Y_n,
	\end{equation}
where $H_k'$ is the pullback of the hyperplane class $H'$ of $\PP(V^\vee)$ along $Y_k \to \PP(V^\vee)$.

Let us now describe the third geometry mentioned in introduction, which relates the double universal hyperplane with the join of (small) universal hyperplanes. This geometry only occurs in splitting case.  Let $\shH_{X_k} \subset X_k \times \PP(V_k^\vee)$ be the small universal hyperplane for $X_k \to \PP(V_k)$, $\pi_k: \shH_{X_k} \to X_k$ the projection, and $\shH(X_1,\ldots, X_n)$ be the $n$-universal hyperplane for $X_k \to \PP(V)$, $k=1,\ldots, n$. Then there is a natural birational morphism:
	\begin{equation}
	\label{eq:beta}
	\beta \colon \check{P}(\shH_{X_1}, \ldots, \shH_{X_n}) \to \shH(X_1, \ldots, X_n),
	\end{equation}
between these smooth varieties, which contracts the sections
	$$\PP_{\shH_{X_1} \times \cdots \times \shH_{X_n}}(\sO(-H_k')) \subset \check{P}(\shH_{X_1}, \ldots, \shH_{X_n}) \qquad k=1,\ldots, n,$$
to the pairwisely disjoint subvarieties
	$$\shH_{X_k} \times \prod_{k \ne i} X_k \subset \shH(X_1,\ldots, X_n).$$
(In fact $\beta$ can be constructed as the strict transform of the birational contractions 
	$\bar{\beta}: \check{P}(\PP(V_1^\vee), \ldots, \PP(V_n^\vee)) \to \PP(V_1^\vee \oplus \cdots V_n^\vee) = \PP(V^\vee)$ 
along $\shH(X_1,\ldots, X_n) \to \PP(V^\vee)$.)  
The geometric situation is summarized in the following commutative diagram:
	\begin{equation} \label{diagram:beta}
	\begin{tikzcd}[row sep=large, column sep= huge]
		\check{P}(\shH_{X_1}, \ldots, \shH_{X_n})  \ar{r}{\beta}  \ar{d}[swap]{(q,~ pr)}  \ar{rd}{g} 	&  \shH(X_1,\ldots, X_n)   \ar[hook']{d}{\delta_{\shH}} \\
		\shH_{X_1} \times \ldots \times \shH_{X_n} \times \PP(V^\vee) \ar{r}{\pi_1\times\cdots \times \pi_n \times \Id}& X_1 \times \ldots \times X_n \times \PP(V^\vee),
	\end{tikzcd}
	\end{equation}
where $q$ is the map (\ref{eq:qn}), and $pr$ is the natural evaluation map $\check{P}(\shH_{X_1},\ldots, \shH_{X_n}) \to \PP(V^\vee)$, $\delta_{\shH}$ is the map in definition of $n$-universal hyperplane, which also appears in diagram (\ref{diagram:cayley}).

In the case $n=2$, the birational morphism 
	$$\beta: \check{P}(\shH_{X_1},\shH_{X_2}) \to \shH(X_1,X_2)$$
is the blowing up of $\shH(X_1,X_2)$ along the smooth subvarieties $\shH_{X_1} \times X_2\, \sqcup\, \shH_{X_2} \times X_1\subset \shH$, with exceptional divisors $E_{1} \sqcup E_{2}$, where $E_{k} \simeq \shH_{X_1} \times \shH_{X_2}$, $k=1,2$, are the two sections of the $\PP^1$-bundle map $q \colon \check{P}(\shH_{X_1},\shH_{X_2}) \to \shH_{X_1} \times \shH_{X_2}$. (In fact, $\beta$ is the strict transform of the blowing up morphism
	$\bar{\beta}: \check{P}(\PP(V_1^\vee),\PP(V_2^\vee)) \to \PP(V_1^\vee \oplus V_2^\vee)$ 
along $\shH(X_1,X_2) \to \PP(V_1^\vee \oplus V_2^\vee)$.)

The first result compares the fiber products of HPDs and joins of HPDs.

\begin{theorem}\label{thm:join-linear}  If $X_k \to \PP(V_k) \subset \PP(V)$, $k=1,2, \ldots, n$, $n \ge 1$ are smooth projective varieties, where $V = \bigoplus_{k=1}^n V_k$, and $\shA^{(k)} \subset D(X_k)$ are Lefschetz categories.
\begin{enumerate}
	\item[$(1)$] 
There is a natural $\PP(V^\vee)$-linear equivalence of Lefschetz categories:
	\begin{align*}	
	(\shA^{(1)})_{/ \PP(V)}^\hpd  \boxtimes_{\PP(V^\vee)} \cdots  \boxtimes_{\PP(V^\vee)} (\shA^{(n)})_{/ \PP(V)}^\hpd  \simeq (\shA^{(1)})_{/ \PP(V_1)}^\hpd   \ccup \cdots \ccup  (\shA^{(n)})_{/ \PP(V_n)}^\hpd .	\end{align*}	
	\item[$(2)$] There is a natural $\PP(V^\vee)$-linear equivalence of Lefschetz categories:
	$$ 
	(\shA^{(1)} \ccup \shA^{(2)} \cdots \ccup \shA^{(n)})^\hpd_{/\PP(V)} 
	 \simeq 
	 (\shA^{(1)})_{/ \PP(V_1)}^\hpd  \ccup  (\shA^{(2)})_{/ \PP(V_2)}^\hpd  \cdots \ccup  (\shA^{(n)})_{/ \PP(V_n)}^\hpd .
	  $$
\end{enumerate}
\end{theorem}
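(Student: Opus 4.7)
The plan is to reduce both parts to a single equivalence via the main theorem, and then to establish that equivalence using the ``third geometric picture'' described in the introduction. First, granting (1), I would deduce (2) by applying the $n$-ary main theorem (Thm \ref{thm:join-nHPD}):
\[
(\shA^{(1)} \ccup \cdots \ccup \shA^{(n)})^\hpd_{/\PP(V)} \simeq (\shA^{(1)})^\hpd_{/\PP(V)} \boxtimes_{\PP(V^\vee)} \cdots \boxtimes_{\PP(V^\vee)} (\shA^{(n)})^\hpd_{/\PP(V)},
\]
and then using (1) to replace the fibre product of the ``large'' HPDs by the categorical join of the ``small'' HPDs. Conversely, (2) implies (1) after another application of the main theorem (noting that the splitting hypothesis forces $X_i \times_{\PP(V)} X_j = \emptyset$, so the smoothness hypothesis of Thm \ref{thm:join-nHPD} is automatic). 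Thus the whole theorem is reduced to proving (1).

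For (1), the main theorem identifies the LHS with the double HPD category $\shC \subset \shH(\shA^{(1)}, \ldots, \shA^{(n)})$. The RHS is a categorical join of Lefschetz categories supported in the pairwise disjoint linear subspaces $\PP(V_k^\vee) \subset \PP(V^\vee)$, so the disjoint condition $(D_n)$ of \S\ref{sec:disjoin:joins} is automatic on the dual side; by Thm \ref{thm:min-rep} the join admits a canonical minimal representative as a $\PP(V^\vee)$-linear Lefschetz subcategory of $D(\check{P}(\shH_{X_1}, \ldots, \shH_{X_n}))$, with Lefschetz centre $\bigotimes_k ((\shA^{(k)})^\hpd_{/\PP(V_k)})_0$. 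The bridge between the two sides is the birational morphism
\[
\beta : \check{P}(\shH_{X_1}, \ldots, \shH_{X_n}) \longrightarrow \shH(X_1, \ldots, X_n)
\]
from (\ref{eq:beta}) and the commutative diagram (\ref{diagram:beta}), whose ambient square is Tor-independent. Using this I would define a $\PP(V^\vee)$-linear comparison functor $\Phi = \beta^* \circ \gamma_{\shC}$ (where $\gamma_\shC \colon \shC \hookrightarrow D(\shH(X_1,\ldots,X_n))$ is the inclusion), show that its image lands in the minimal representative of the join, and then apply the equivalence criterion Lem \ref{lem:equiv-lef}.

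The hypotheses of Lem \ref{lem:equiv-lef} I then need to verify are that $\Phi$ and its left adjoint restrict to equivalences of Lefschetz centres. By Thm \ref{cor:n-HPD} the centre of $\shC$ is $\gamma_\shC^* \pi_\shH^*(\bigotimes_k \shA^{(k)}_0)$, which via Lem \ref{lem:hpdcat} applied factor by factor coincides with $\bigotimes_k ((\shA^{(k)})^\hpd_{/\PP(V_k)})_0$, the centre of the join produced by Thm \ref{thm:min-rep}. Checking that $\Phi$ actually realises this identification is a direct push-pull computation along the projective bundle $q$ of (\ref{diagram:beta}), using the Tor-independence of the ambient square exactly as in Step 2 of the proof of Thm \ref{thm:main}.

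The main obstacle is ensuring that $\Phi$ lands \emph{precisely} in the minimal representative of the join rather than in some larger subcategory of $D(\check{P})$, and that the extra Lefschetz components match on the nose. For $n=2$ the morphism $\beta$ is the blow-up of $\shH(X_1,X_2)$ along the disjoint union $\shH_{X_1} \times X_2 \sqcup X_1 \times \shH_{X_2}$, so Orlov's blow-up formula (Thm \ref{thm:bl}) provides a clean semiorthogonal decomposition of $D(\check{P})$; combining this with Orlov's projective bundle formula for $q$ and with the explicit Lefschetz components from Thm \ref{thm:min-rep} pins the image down after careful bookkeeping of the two exceptional divisors $E_1, E_2 \simeq \shH_{X_1} \times \shH_{X_2}$ and the two zero sections of $q$. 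For general $n$ I would proceed by induction on $n$, peeling off one factor via the $n=2$ case together with the associativity of $\ccup$ from Thm \ref{thm:join-nHPD}; this avoids having to decompose $\beta$ itself as an iterated blow-up.
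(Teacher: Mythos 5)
Your proposal is correct in outline and shares the paper's architecture: both reduce $(1)$ and $(2)$ to the single equivalence $\shC \simeq (\shA^{(1)})^\hpd_{/\PP(V_1)} \ccup \cdots \ccup (\shA^{(n)})^\hpd_{/\PP(V_n)}$ via Thm.~\ref{thm:join-nHPD}, both use the birational morphism $\beta$ of (\ref{eq:beta}) and diagram (\ref{diagram:beta}), and both close the argument by comparing Lefschetz centers through Lem.~\ref{lem:equiv-lef}, with the centers supplied by Thm.~\ref{cor:n-HPD}, Thm.~\ref{thm:min-rep} and Lem.~\ref{lem:hpdcat}. The substantive difference is the \emph{direction} of the comparison functor. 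You pull back, $\Phi = \beta^* \circ \gamma_{\shC}$, and the ``main obstacle'' you identify --- that the image must land exactly in the minimal representative $\check{\shJ}$ --- is a real problem in that direction (the pullback of an object of $\shC$ will in general acquire components outside $\check{\shJ}$), which is why you then reach for Orlov's blow-up and projective-bundle formulas, exceptional-divisor bookkeeping, and induction on $n$. The paper instead pushes forward, $\alpha = \beta_* \circ \check{\gamma} \colon \check{\shJ} \to \shC$, and in that direction the containment is a one-line check: $\shC$ has the intrinsic characterization of Def.~\ref{def:nHPD} via $\delta_{\shH *}$, and $g = \delta_{\shH}\circ\beta$ factors through $\pi_1\times\cdots\times\pi_n\times\Id$ with $\pi_{k*}(\shA^{(k),\hpd}) \subset \shA^{(k)}_0$ by Lem.~\ref{lem:hpdcat}. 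With that choice, Lem.~\ref{lem:equiv-lef} also makes your worry about matching the higher Lefschetz components ``on the nose'' moot --- only the centers need to be matched, and $\PP(V^\vee)$-linearity does the rest --- so no semiorthogonal bookkeeping and no induction on $n$ are needed; the paper's proof is uniform in $n$. Your extra machinery is not wrong, just avoidable.

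One step you gloss deserves flagging: ``a direct push-pull computation along $q$'' understates the single delicate point of the center comparison. Showing that $\alpha$ (equivalently, the adjoint of your $\Phi$) carries $\check{\shJ}_0$ onto $\shC_0$ amounts to the vanishing (\ref{eqn:betakills}), i.e.\ $\gamma_{\shC}^* \, \beta_* \, q^* \, \cone\bigl(\otimes \pi_k^* a_k \to \otimes \gamma_k\gamma_k^* a_k\bigr) = 0$. What push-pull gives directly is only $g_*\, q^* \,\cone(\cdots) = 0$, factor by factor from Lem.~\ref{lem:hpdcat}; since $\gamma_{\shC}^*$ is a projection inside the hyperplane category rather than anything computed by $g_*$, one cannot conclude immediately. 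The paper bridges this by composing with the fully faithful functor $j_*\,\rho^*$ of diagram (\ref{diagram:cayleyn}) and using Tor-independence of its ambient square to rewrite $j_*\,\rho^*\,\gamma_{\shC}^*\,\beta_*$ as $\gamma_{\shD}^*\,\delta_{\shH_P}^*\,\pi^*\, g_*$. If you keep your route, you need either this transfer step or the full decomposition bookkeeping you propose; in your write-up, make that bridge explicit.
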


\begin{proof} For simplicity of notations, in this proof we use $\shA^{k, \hpd} = (\shA^{k})_{/ \PP(V_k)}^\hpd$ to denote the small HPD, $\shH_{\shA^{(k)}} = \shH_{\shA^{(k)} \, /\PP(V_k)}$ to denote the small universal hyperplane, $k=1,\ldots,n$. Denote $\shC$ the $n$-HPD category for $\shA^{(k)}$ as usual , then both the statements $(1)$ and $(2)$ will follow from the following $\PP(V^\vee)$-equivalence of Lefschetz categories:
	\begin{align*} 
	\shC \simeq  (\shA^{(1)})_{/ \PP(V_1)}^\hpd  \ccup  (\shA^{(2)})_{/ \PP(V_2)}^\hpd  \cdots \ccup  (\shA^{(n)})_{/ \PP(V_n)}^\hpd=: \check{\shJ}.
	\end{align*}
Our goal, to put shortly, is to show $\beta_*$ induces an equivalence between $\check{\shJ}$ and $\shC$, where $\beta$ is the birational map (\ref{eq:beta}). To make this precise, we need to introduce certain notations.

By definition of the categorical join, there is a natural inclusion $\check{\shJ} \subset  \check{P}(\shA^{(1), \hpd}, \ldots, \shA^{(n), \hpd})$; we will not distinguish $\check{\shJ}$ with its image in the larger category, by abuse of notations.

Note that the inclusions $\gamma_k: \shA^{k, \hpd} \hookrightarrow \shH_{\shA^{(k)}}$, in the definition of HPD, induce an inclusion 
	$$\check{P}(\otimes \gamma_k) \colon  \check{P}(\shA^{(1), \hpd}, \ldots, \shA^{(n), \hpd}) \hookrightarrow \check{P}(\shH_{\shA^{(1)}}, \ldots, \shH_{\shA^{(n)}}),$$
by $\PP^{n-1}$-bundle construction of $\check{P}$ in (\ref{eq:qn}). We continue to denote the restriction of $q^*$ by:
	$$q^* \colon \bigotimes_k \shH_{\shA^{(k)}} \to \check{P}(\shH_{\shA^{(1)}}, \ldots, \shH_{\shA^{(n)}})$$
where $q = q_n$ is the $\PP^{n-1}$-bundle (\ref{eq:qn}), and denote by 
 	$$ q^*| \colon \bigotimes_k \shA^{(k), \, \hpd} \to   \check{P}(\shA^{(1), \hpd}, \ldots, \shA^{(n), \hpd}),$$
the restriction of $q^*$ to the subcategory $ \bigotimes \shA^{(k), \, \hpd}$. Therefore one has commutativity:
	$$q^*| \circ (\gamma_1 \times \cdots \times \gamma_n)^* = \check{P}(\otimes \gamma_k)^* \circ q^*.$$

Denote by $\check{\gamma}$ the restriction of the functor $\check{P}(\otimes \gamma_k)$ to $\check{\shJ}$:
		$$\check{\gamma} := \check{P}(\otimes \gamma_k)|_{\check{\shJ}}  \colon \check{\shJ} \hookrightarrow \check{P}(\shA^{(1), \hpd}, \ldots, \shA^{(n), \hpd}) \hookrightarrow \check{P} (\shH_{\shA^{(1)}}, \ldots, \shH_{\shA^{(n)}}).$$

We show that the image of $\check{\shJ}$ under $\beta_* \circ \check{\gamma}$, is contained in $\shC$ (where $\beta$ is the birational map (\ref{eq:beta})). In fact, for any $C \in \check{\shJ}$, from construction we have $q_* \circ \check{\gamma} (C) \subset \shA^{(1),\hpd} \boxtimes \cdots \boxtimes \shA^{(n), \,\hpd}$. Note that since $\pi_{k*} (\shA^{(k), \hpd}) \subset \shA^{k}_0$ (see Lem.\ref{lem:hpdcat}),  through the composition map
		$$g = \delta_{\shH} \circ \beta =(\pi_1 \times \cdots \times \pi_n \times \Id) \circ (q, pr)$$
of diagram (\ref{diagram:beta}), we have the following equality:
	$$ \delta_{\shH *} (\beta_* \circ \check{\gamma}(C))   = g_* \circ \check{\gamma}(C) \in \bigotimes_{k=1}^n \shA^{(k)}_{0} \boxtimes D(\PP(V^\vee)).$$
Therefore $\beta_* \circ \check{\gamma}(C) \in \shC$, by definition of $n$-HPD category Def. \ref{def:nHPD}. 

For simplicity of notations, denote by $\alpha$ the map $\check{\shJ} \to \shC$ induced from the of the composition of $\beta_*$ and $\check{\gamma}$, i.e. we have a commutative diagram:
	$$
	\begin{tikzcd}
	\check{P} (\shH_{\shA^{(1)}}, \ldots, \shH_{\shA^{(n)}})  \ar{r}{\beta_*}  & \shH(\shA^{(1)}, \ldots, \shA^{(n)}) \\
	\check{\shJ} \ar[hook]{u}{\check{\gamma}} \ar{r}{\alpha: = \beta_{*} \circ \check{\gamma} } & \shC \ar[hook,swap]{u}{\gamma_\shC}.
	\end{tikzcd}
	$$
	Our goal is precisely to show $\alpha$ induces a $\PP(V^\vee)$-linear equivalence of Lefschetz categories: 
	\begin{equation}
	\label{eq:alpha:equiv}
	\alpha \colon \check{\shJ} \simeq \shC.
	\end{equation}
This will follow from a direct comparison of Lefschetz centers. Concretely, by Lem. \ref{lem:equiv-lef}, we only need to show $\alpha$ and its left adjoint $\alpha^*$ induce $\PP(V^\vee)$-linear equivalences:
	$$\alpha^* \colon \shC_0 \simeq \check{\shJ}_0 \colon \alpha,$$
where the Lefschetz center $\shC_0$ for $\shC$, by Cor. \ref{cor:n-HPD}, is given by the image of
	$$  \shA_0 : = \bigotimes \shA_{0}^{(k)} \equiv  \bigotimes \shA_{0}^{(k)} \boxtimes \sO_{\PP(V^\vee)}  \subset \shA^{(1)} \boxtimes \cdots \boxtimes \shA^{(n)} \boxtimes D(\PP(V^\vee)),$$
under the map $\gamma^* \circ \delta_{\shH}^*$ , i.e.
	$$ \shC_0 = \gamma_\shC^* \circ \delta_{\shH}^* ( \shA_0 ) \subset \shC,$$
and the Lefschetz center $\check{\shJ}_0$ for $\check{\shJ}$, by Thm. \ref{thm:min-rep} and Lem. \ref{lem:hpdcat}, is given by 
	\begin{align*}
	\check{\shJ}_0 & =  q^*| ( \bigotimes \shA_{0}^{(k), \hpd}  )  
	  = q^*| \circ (\gamma_1 \times \cdots \times \gamma_n)^* \circ (\pi_1 \times \cdots \times \pi_n)^* (\shA_0)\\
	& =\check{P}(\otimes \gamma_k)^* \circ q^* \circ (\pi_1 \times \cdots \times \pi_n)^* (\shA_0)  \\
	&=  \check{\gamma}^* \circ q^* \circ (\pi_1 \times \cdots \times \pi_n)^* (\shA_0) = \check{\gamma}^* \circ g^* (\shA_0),
 	\end{align*}
where $g$ is the composition of diagram (\ref{diagram:beta}).

\medskip \noindent{\em Step $1$.} We show the left adjoint $\alpha^*$ induces $\shC_0 \simeq \check{\shJ}_0$. From 
	$$
	\alpha^* \circ \gamma_\shC^* = \check{\gamma}^* \circ \beta^* \quad \text{and} 
	\quad
	g^* = \beta^* \circ  \delta_{\shH}*,$$
the result follows immediately from above descriptions of $\shC_0$ and $\check{\shJ}_0$.

\medskip \noindent{\em Step $2$.} We show $\alpha =  \beta_{*} \circ \check{\gamma}$ induces $\check{\shJ}_0 \simeq \shC_0$. Denote 
	$$\shK_0 : =g ^* (\shA_0) \equiv  q^* (\bigotimes  \pi_k^* \, \shA_0^{(k)}) \equiv \beta^* \circ \delta_{\shH}^* (\shA_0) \subset  \check{P} (\shH_{\shA^{(1)}}, \ldots, \shH_{\shA^{(n)}}),$$
then it is clear from $\beta_* \circ \beta^* = \Id$ that $\beta_* \colon \shK_0 \simeq \delta_{\shH}^* (\shA_0) (\simeq \shA_0)$, and therefore
	$$\gamma_{\shC}^* \circ \beta_* \colon \shK_0  \simeq \shC_0.$$
Therefore we only need to show $\check{\gamma} (\check{\shJ}_0)$ and $\shK_0$ have the same image under $\beta_*$.  
Notice since
	$$\check{\gamma} (\check{\shJ}_0) = q^* (\bigotimes \gamma_k \shA_{0}^{(k), \hpd}),$$
	 is generated by elements of the form $q^* (\otimes \gamma_k \gamma_k^* a_k)$, where $a_k \in \shA_0^{(k)}$, we only need to show:
	\begin{align} 
	\label{eqn:betakills}	
	\gamma_C^* \circ \beta_* \circ q^* \, \cone (\,\otimes \pi_k^* a_k \to \otimes \gamma_k \gamma_k^* a_k ) = 0,
	\end{align}
for all $a_k \in \shA^{(k)}_0$, $k=1,\ldots, n$, where $q^*(\otimes \pi_k^* a_k)  \in \shK_0$, $q^* (\otimes\gamma_k \gamma_k^* a_k) \in \check{\gamma}(\check{\shJ}_0) $.
This will follow from the following fact:
	\begin{equation}
	\label{eq:gcone=0}
	g_* \circ q^* \circ \cone(\,\otimes \pi^* a_k \to \otimes \gamma_k \gamma_k^* a_k) = 0, \qquad \forall a_k \in \shA^{(k)}_0.
	\end{equation}

To show (\ref{eq:gcone=0}), first notice from Lem. \ref{lem:hpdcat}, one has:
	$$\pi_{k*} \, \cone(\pi_k^* a_k \to \gamma_k \, \gamma_k^* a_k) =0.$$
Since $\otimes \pi_k^* a_k \to \otimes \gamma_k \gamma_k^* a_k$ is the composition of canonical morphisms:
	\begin{align*}& \otimes a_k \equiv a_1 \otimes a_2 \otimes \cdots \otimes a_n \to \gamma_1 \gamma_1^* a_1 \otimes a_2 \otimes \cdots \otimes a_n \to 
	 \gamma_1  \gamma_1^* a_1 \otimes \gamma_2 \gamma_2^* a_2 \otimes \cdots \otimes a_n \\
	&  \to \cdots \to \gamma_1\gamma_1^* a_1\otimes \cdots \otimes  \gamma_{n-1} \gamma_{n-1}^* a_{n-1}  \otimes a_n \to  \gamma_1\gamma_1^* a_1\otimes \cdots  \otimes \gamma_n \gamma_n^* a_n \equiv  \otimes \gamma_k \gamma_k^* a_k.
	\end{align*}
(Here we write $a_k =\pi_k^* \,a_k$ for simplicity of notations.)
The cone of each of above morphisms has a tensor factor of the form $\cone( \pi_k^* a_k \to \gamma_k \, \gamma_k^* a_k)$, $k=1,\ldots,n$, therefore becomes zero under $\pi_{k*}$. Hence as an iterated cone of above cones, the $q^* \cone (\otimes \pi_k^* a_k \to \otimes \gamma_k \gamma_k^* a_k)$ becomes zero under the map $g_* = (\pi_1 \times \cdots \times \pi_n \times \Id)_* \circ (q_*, pr_*)$ (notice $q_* \, q^* = \Id$).

\medskip
To show (\ref{eq:gcone=0}) implies (\ref{eqn:betakills}), it suffices to compose the left hand side of (\ref{eqn:betakills}) by the fully faithful functor $j_* \, \rho^*$ of diagram (\ref{diagram:cayleyn}), then we will obtain: 
	$$(j_* \, \rho^*) \gamma_C^*\, \beta_* \, q^* \,\cone(\,\otimes a_k \to \otimes \gamma_k \gamma_k^* a_k )  = (\gamma_{\shD}^* \, \delta_{\shH_P}^* \, \pi^*) \,g_* \,q^* \cone(\,\otimes a_k \to \otimes \gamma_k \gamma_k^* a_k).$$
In fact, notice since $j_* \, \rho^*: \shC \simeq \shD$ (where $\shD$ denote the image which is equivalent to the HPD of categorical join $(\shA^{(1)} \ccup \cdots \ccup \shA^{(n)})^\hpd$ as Rmk. \ref{rmk:cayleyn}), we have $j_* \, \rho^* \gamma_\shC^* \simeq \gamma_{\shD}^* \,  j_* \, \rho^*$. Then from the Tor-independence of the ambient square of diagram (\ref{diagram:cayleyn}), we have
	\begin{align*}
	j_* \, \rho^* \gamma_C^* \, \beta_* \,  =  \gamma_{\shD}^* \, j_* \, \rho^* \,\beta_*  =    \gamma_{\shD}^* \, \delta_{\shH_P}^* \, \pi^* \,\delta_{\shH *}  \, \beta_* =\gamma_{\shD}^* \, \delta_{\shH_P}^* \, \pi^* \,g_*.
	\end{align*}

Therefore (\ref{eqn:betakills}) holds. To finish Step $2$, notice since $\pi_{\shH*}: \shC_0 \simeq \shA_0$, and $\pi_{\shH*} \circ \beta_* = \prod \pi_{k*} \circ\gamma_k : \check{\shJ}_0 \simeq  \shA_0$, therefore one obtains that $\beta_* : \check{\shJ}_0 \simeq \shC_0$. 

\medskip
\noindent {\em Final step.} Notice the functors $\alpha$ and $\alpha^*$ are $\PP(V^\vee)$-linear. In fact, since $\check{\shJ}$ and $\shC$ are by definition $\PP(V^\vee)$-linear subcategories of the corresponding ambient categories, and $\beta$ is a $\PP(V^\vee)$-linear morphism, therefore $\alpha = \beta_* \circ \check{\gamma}$ and its adjoint $\alpha^*$ are all $\PP(V^\vee)$-linear. From the above two steps and Lem. \ref{lem:equiv-lef}, we have an equivalence of Lefschetz categories $\beta_{*|} \colon \check{\shJ} \simeq \shC$.
\end{proof}

The next result compares the HPDs in ambient and small projective spaces. 

\begin{theorem} \label{thm:cone} If $X_k \to \PP(V_k)$, $k=1,2$, are smooth projective varieties, $V = V_1 \oplus V_2$, and $\shA^{(k)} \subset D(X_k)$ are Lefschetz categories.
\begin{enumerate}
\item[$(1)$] The homological projective dual of $\shA^{(k)}$ in $\PP(V)$ is the categorical join of its homological projective dual in $\PP(V_{k})$ and the orthogonal linear subspace of $\PP(V_{k})$, i.e. 
	\begin{align*}
	(\shA^{(1)})^{\hpd}_{/\PP(V)}  \simeq  (\shA^{(1)})^\hpd_{/\PP(V_1)}  \ccup \PP(V_2^\vee), \qquad (\shA^{(2)})^{\hpd}_{/\PP(V)}  \simeq  (\shA^{(2)})^\hpd_{/\PP(V_2)}  \ccup \PP(V_1^\vee).
	\end{align*}
\item[$(2)$]  The homological projective dual of the categorical join of $\shA^{(k)}$ and the orthogonal linear subspace is the homological projective dual of $\shA^{(k)}$ in $\PP(V_{k})$, i.e. 
	\begin{align*}
	(\shA^{(1)}  \ccup \PP(V_2))^\hpd_{/\PP(V)} \simeq (\shA^{(1)})^\hpd_{/ \PP(V_1)}, \qquad (\shA^{(2)}  \ccup \PP(V_1))^\hpd_{/\PP(V)} \simeq (\shA^{(2)})^\hpd_{/ \PP(V_2)}.
	\end{align*}
\end{enumerate}
\end{theorem}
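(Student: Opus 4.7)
The plan is to deduce (2) from the Main Theorem \ref{thm:main} combined with the ``HPD with base locus'' result (Theorem \ref{thm:app:HPDbs}), and then to obtain (1) from (2) by applying it in the dual splitting $V^\vee = V_1^\vee \oplus V_2^\vee$ and invoking HPD reflexivity.

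For (2), I would first regard $\shA^{(1)}$ as a $\PP(V)$-linear Lefschetz category via the inclusion $\PP(V_1) \hookrightarrow \PP(V)$. The disjointness $X_1 \cap \PP(V_2) = \emptyset$ is automatic from the splitting $V = V_1 \oplus V_2$, so the Main Theorem applies to the pair $(\shA^{(1)}, D(\PP(V_2)))$ and yields
\[
(\shA^{(1)} \ccup \PP(V_2))^\hpd_{/\PP(V)} \;\simeq\; (\shA^{(1)})^\hpd_{/\PP(V)} \boxtimes_{\PP(V^\vee)} (D(\PP(V_2)))^\hpd_{/\PP(V)}.
\]
The HPD of the linear subspace $\PP(V_2) \subset \PP(V)$ is the dual linear subspace $\PP(V_2^\perp) = \PP(V_1^\vee) \subset \PP(V^\vee)$, so the fiber product on the right is the base change of $(\shA^{(1)})^\hpd_{/\PP(V)}$ to $\PP(V_1^\vee)$. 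Applying Theorem \ref{thm:app:HPDbs} with $\shA = \shA^{(1)}$ and $L = V_1^\vee$ (so that $L^\perp = V_2$, the base locus $X_1 \cap \PP(V_2)$ is empty, and the refined blow-up is trivial) identifies this base change with $(\shA^{(1)})^\hpd_{/\PP(V_1)}$, proving (2). The analogous statement for $\shA^{(2)}$ follows by symmetry.

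For (1), I would then apply the freshly proved (2) to the dual splitting $V^\vee = V_1^\vee \oplus V_2^\vee$, taking the role of ``$\shA^{(1)}$'' there to be $\shB := (\shA^{(1)})^\hpd_{/\PP(V_1)}$, which is a $\PP(V_1^\vee)$-linear Lefschetz category. This gives
\[
(\shB \ccup \PP(V_2^\vee))^\hpd_{/\PP(V^\vee)} \;\simeq\; \shB^\hpd_{/\PP(V_1^\vee)} \;\simeq\; \shA^{(1)},
\]
the last step being HPD reflexivity. Taking HPD over $\PP(V)$ of both sides and using reflexivity once more produces
\[
(\shA^{(1)})^\hpd_{/\PP(V_1)} \ccup \PP(V_2^\vee) \;\simeq\; (\shA^{(1)})^\hpd_{/\PP(V)},
\]
which is exactly (1); the case of $\shA^{(2)}$ is again symmetric.

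The main technical point I expect to require care is the base-locus input to Theorem \ref{thm:app:HPDbs}: one must verify that extending the $\PP(V_1)$-linear structure of $\shA^{(1)}$ to a $\PP(V)$-linear structure has empty base locus at $\PP(V_2)$ (automatic from $V = V_1 \oplus V_2$), and that the refined blow-up $\rBlA^{(1)}$ recovers $\shA^{(1)}$ together with its original $\PP(V_1)$-linear structure (via the projection $\PP(V) \setminus \PP(V_2) \to \PP(V_1)$, which agrees with the given map $X_1 \to \PP(V_1) \subset \PP(V)$ on the support of $\shA^{(1)}$). Both checks are routine in the splitting situation, so no serious obstacle is expected; the real content of the argument is packaged into the Main Theorem and Theorem \ref{thm:app:HPDbs}.
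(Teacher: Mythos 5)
Your proposal is correct, and it settles the crucial step by a genuinely different route than the paper. Both arguments open the same way: apply the Main Theorem \ref{thm:main} to the pair $(\shA^{(1)}, D(\PP(V_2)))$ (disjointness, hence empty intersection of the expected --- negative --- dimension, is automatic from the splitting), and use that the HPD of the linear subspace $\PP(V_2) \subset \PP(V)$ is $\PP(V_1^\vee)$, so that the fiber product of HPDs becomes the base change $\big((\shA^{(1)})^{\hpd}_{/\PP(V)}\big)|_{\PP(V_1^\vee)}$. For the remaining identification of this base change with $(\shA^{(1)})^{\hpd}_{/\PP(V_1)}$, the paper does \emph{not} invoke Thm.\ \ref{thm:app:HPDbs}: it re-runs the argument of Thm.\ \ref{thm:join-linear} on the blow-up $\beta \colon \check{P}(\shH_{X_1}, \shH_{X_2}) \to \shH(X_1,X_2)$ with $X_2 = \PP(V_2)$, showing that $\alpha = \beta_{*|}$ carries $q^*(\shA^{(1)})^{\hpd}_{/\PP(V_1)}$ into the double HPD category $\shC$ and is an equivalence by the Lefschetz-center criterion Lem.\ \ref{lem:equiv-lef}. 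You instead quote Thm.\ \ref{thm:app:HPDbs} with $L = V_1^\vee$, noting that the base locus $X_1 \times_{\PP(V)} \PP(V_2)$ is empty so the refined blow-up is trivial; one small point worth making explicit there is that triviality of the \emph{refined} part also uses moderateness of $\shA^{(1)}$ over $\PP(V_1)$: the components removed in Lem.\ \ref{lem:app:ref-bl} are indexed by $\ell-1, \ldots, m_1-1$ with $\ell = \dim V_1$, and this range is vacuous (equivalently, $\shA_k = 0$ for $k \ge \ell - 1$) precisely because $m_1 < \dim V_1$, after which Prop.\ \ref{prop:app:lef:ref-bl} returns the original Lefschetz components, confirming the structure check you flag. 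Your route is shorter and purely formal, outsourcing the geometric content to the quoted theorem of \cite{JL18Bl}, whereas the paper's route stays inside the machinery it develops (the third geometric picture) and identifies $\shC$ itself with the small HPD along the way, which it reuses elsewhere. Finally, your explicit derivation of (1) from (2) --- applying (2) in the dual splitting $V^\vee = V_1^\vee \oplus V_2^\vee$ to $\shB = (\shA^{(1)})^{\hpd}_{/\PP(V_1)}$ and using reflexivity twice --- is exactly what the paper compresses into its one-line remark that (1) and (2) are dual statements under HPD.
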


\begin{proof} $(1)$ and $(2)$ are dual statements under HPD, so we only need to show one of them, say $(2)$. The observation is that $(2)$ can be viewed as a degenerate case of Thm. \ref{thm:join-linear}, and the same argument can be applied. The two equivalences are symmetric; we show the first equivalence. Let $\shA^{(1)} \subset D(X_1)$ be Lefschetz category, and take $X_2 = \PP(V_2)$, $\shA^{(2)} = D(\PP(V_2))$ with Beilinson's decomposition; and denote $\shC$ be the double HPD category of $\shA^{(1)}$ and $\shA^{(2)}$ as usual. Our goal is to show that everything is naturally equivalent to $\shC$: 
	$$(\shA^{(1)}  \ccup \PP(V_2))^\hpd_{/\PP(V)}   \simeq  \big((\shA^{(1)})_{/\PP(V)}^{\hpd}\big)|_{\PP(V_1^\vee)} \simeq  (\shA^{(1)})^\hpd_{/ \PP(V_1)}  \simeq \shC,$$
where $\big((\shA^{(1)})_{/\PP(V)}^{\hpd}\big)|_{\PP(V_1^\vee)}$ is the base-change category of $(\shA^{(1)})_{/\PP(V)}^{\hpd}$ along the inclusion $\PP(V_1^\vee) \to \PP(V^\vee)$. 
First, from Thm. \ref{thm:main} applied to above $\shA^{(1)}$ and $\shA^{(2)}$, we have:
	$$ (\shA^{(1)}  \ccup \PP(V_2))^\hpd_{/\PP(V)} \simeq \shC \simeq  \big((\shA^{(1)})_{/\PP(V)}^{\hpd}\big)|_{\PP(V_1^\vee)}.$$
It remains to show the last equivalence. Note $\shC$ in this case is now described by
		$$\shC  =  \{C \in \shH(\shA^{(1)}, \shA^{(2)}) \mid \delta_{\shH*} \,C \in \shA^{(1)}_0 \otimes D(\PP(V^\vee))\} \subset  \shH(\shA^{(1)}, \shA^{(2)}),$$
since $\shA^{(2)}_0 = \langle  \sO_{\PP(V_2)} \rangle$. 
Apply the construction $P^\vee$ of (\ref{eq:qn}) to the universal hyperplanes $\shH_{X_1}  = \shH(X_1)_{/ \PP(V_1)}$ and $\shH_{X_2} = \shH(\PP(V_2))_{/ \PP(V_2)}$, then we have a blowing up morphism 
	$$\beta: \check{P}(\shH_{X_1}, \shH_{X_2}) \to \shH(X_1,X_2).$$
		
The rest is the same as the proof of Thm. \ref{thm:join-linear}. In fact, since $ \delta_{\shH *} \circ \beta_* \,C= g_* \,C = ((\pi_1 \times \pi_2) \times {\rm pr}_{\PP(V^\vee)})_* \circ q_* \, C \in \shA_0  \otimes D(\PP(V^\vee))$, for any $C \in q^* (\shA^{(1)})^\hpd_{/ \PP(V_1)}$, therefore the restriction $\alpha: =\beta_{*|}$ of $\beta_*$ sends $q^*(\shA^{(1)})^\hpd_{/ \PP(V_1)} $ to $\shC$. Then the same argument of {Step $1$} shows $\alpha^* \colon \shC_0 \simeq \shA_0^{\hpd\,(1)}$, and that of {Step $2$} shows  $\alpha \colon \shA_0^{\hpd \, (1)} \simeq \shA_0^{(1)}$. Therefore by Lem \ref{lem:equiv-lef}, $\alpha$ induces a $\PP(V^\vee)$-linear equivalence between the Lefschetz categories $q^*(\shA^{(1)})^\hpd_{/ \PP(V_1)} $ and $\shC$.
\end{proof}

\begin{remark} \label{rmk:cone} For a fixed $\shA^{(1)}/\PP(V_1) \subsetneqq \PP(V)$, it follows from the theorem that $(\shA^{(1)})^{\hpd}_{/\PP(V)}  \simeq  (\shA^{(1)})^\hpd_{/\PP(V_1)}  \ccup \PP(V_2^\vee)$ holds for all $V_2$ such that $V = V_1 \oplus V_2$. In particular this implies the right hand side category is independent of the choice of $V_2$. In fact, there is another similar construction to categorical join called {\em categorical cone} $\shC_{V_1^\perp} (X_1^\hpd)$ defined in \cite{KP18}, which serves the purpose of the ``categorical join of $(\shA^{(1)})^\hpd_{/\PP(V_1)}$ and $\PP(V_1^\perp)$'', and satisfies $\shC_{V_1^\perp} (X_1^\hpd) \simeq (\shA^{(1)})^\hpd_{/\PP(V_1)} \ccup \PP(V_2^\vee)$ for every choice of splitting $V= V_1 \oplus V_2$. 
\end{remark}

\appendix
\section{$n$-HPD category and $n$-universal hyerplane} \label{app:proof:sod:nH}

\begin{proof}[Proof of Lem.\ref{lem:sod:nH}] For $n=1$ this is Lem. \ref{lem:sodH}. We focus on $n=2$, and the general case will follow from same argument and induction. Consider the Tor-independent cartesian diagram:
	\begin{equation*}
	\begin{tikzcd} [row sep= 3.6 em, column sep = 4.6 em]
	\shH   = \shH(X_1, X_2) \ar[hook]{d}[swap]{\iota_2} \ar[hook]{r}{\iota_1}  \ar[hook]{dr}{\delta_{\shH}}& \shH_{X_1} \times X_2  \ar[hook]{d}{\delta_1 \times \Id_{X_2}} \\
	\shH_{X_2} \times X_1 \ar[hook]{r}{\delta_2 \times \Id_{X_1}}         &  X_1 \times X_2 \times \PP(V^\vee). 
	\end{tikzcd}	
	\end{equation*}
We want to show that the following functors:
	\begin{align*}
	&\iota_1^* : \quad \shA^{(1), \hpd} \otimes \shA_{1}^{(2)}(H_2), \cdots, \shA^{(1), \hpd} \otimes \shA_{m_2-1}^{(2)}((m_2-1)H_2) \to \shH(\shA^{(1)},\shA^{(2)})& \text{(I)}\\
	& \iota_2^*: \quad \shA^{(2), \hpd} \otimes \shA_{1}^{(1)}(H_1), \cdots, \shA^{(2), \hpd} \otimes \shA_{m_1-1}^{(1)}((m_1-1)H_1) \to \shH(\shA^{(1)},\shA^{(2)})  & \text{(II)} \\
	 &\delta_{\shH}^*: \quad \shA_{i}^{(1)}(i H_1)\otimes \shA_{j}^{(2)}(j H_2) \otimes D(\PP(V^\vee)) \to \shH(\shA^{(1)},\shA^{(2)}), \quad i,j \ge 1. & \text{(III)}
	\end{align*}
are fully faithful, and the image are orthogonal in the desired order of the Lemma. The fully faithfulness of $\iota_k^*$ in (I) and (II), $k=1,2$ are from the fact that 
	$$\cone(\Id \to \iota_{k*} \iota_k^*) = \otimes \sO(-H_k - H')[1],$$
and that $ \shA^{(k), \hpd} $ is $\PP(V^\vee)$-linear.
More explicitly, for example consider (I), for any $C_i \otimes A_i \in  \shA^{(1), \hpd}  \otimes \shA_{j_i}^{(2)}(j_i H_2)$, $i=1,2$ where $C_i \in  \shA^{(k), \hpd} $ and $A_i \in \shA_{j_i}^{(2)}(j_i H_2)$, $j_1 \ge j_2 \ge 1$, we have:
	\begin{align*}
	& \Hom_{\shH}(\iota_1^* (C_1 \otimes A_1), \iota_1^* (C_2 \otimes A_2)) ~= ~\Hom_{\shH}( (C_1 \otimes A_1), \iota_{1*} \, \iota_1^* (C_2 \otimes A_2)) \\
	=&   \big\{ \Hom_{\shH_{X_1} \times X_2} (C_1 \otimes A_1, C_2 \otimes A_2 (-H_2-H')) \to \Hom_{\shH_{X_1} \times X_2} (C_1 \otimes A_1, C_2 \otimes A_2) \big \} \\
	=&    \big\{ \Hom_{\shH_{X_1}}(C_1,C_2(-H')) \otimes \Hom_{X_2}(A_1, A_2(-H_2)) \to \Hom_{\shH_{X_1} \times X_2} (C_1 \otimes A_1, C_2 \otimes A_2) \big \} \\
	=&   \Hom_{\shH_{X_1} \times X_2} (C_1 \otimes A_1, C_2 \otimes A_2).
	\end{align*}

The fully faithfulness of $\delta_{\shH}^*$ in (III) are from the Koszul complex for $\shH \subset X_1 \times X_2 \times \PP(V^\vee)$:
	$$0 \to \sO(-H_1-H_2-2H') \to \sO(-H_1- H') \oplus \sO(-H_2 - H') \to \sO \to \sO_{\shH},$$	
therefore $\cone(\Id \to \delta_{\shH*} \delta_{\shH}^*)$ is an iterated cone of 
	$$\otimes \sO(-H_1-H_2-2H') \quad \text{and} \quad \bigoplus_{k=1}^2\otimes \sO(-H_k-H').$$
Therefore for all the generators $A_k \otimes B_k \otimes F_k \in \shA_{i_k}^{(1)}(i_k H_1) \otimes \shA_{j_k}^{(2)}(j_k H_1) \otimes D(\PP (V^\vee))$ of (III), $k=1,2$,  where  $A_k \in \shA_{i_k}^{(1)}(i_k H_1)$, $B_k \in \shA_{j_k}^{(2)}(j_k H_2)$, $F_k \in D(\PP (V^\vee))$, $i_1\ge i_2\ge1$, $j_1 \ge j_2 \ge 1$, the cone of the following natural morphism:
	$$ \Hom_{X_1 \times X_2 \times \PP(V^\vee)}(A_1 \otimes B_1 \otimes F_1, A_2 \otimes B_2 \otimes F_2) \to \Hom_{\shH}(\delta_{\shH}^* (A_1 \otimes B_1 \otimes F_1), \delta_{\shH}^* ( A_2 \otimes B_2 \otimes F_2))$$
induced by the unit map $\Id \to \delta_{\shH*} \delta_{\shH}^*$ is an iterated cone of 
	\begin{align*} & \Hom_{X_1 \times X_2 \times \PP(V^\vee)}(A_1 \otimes B_1 \otimes F_1, A_2(-H_1) \otimes B_2(-H_2) \otimes F_2(-2H')) \\
	= & \Hom_{X_1}(A_1, A_2(-H_1))\otimes \Hom_{X_2}(B_1, B_2(-H_2)) \otimes \Hom_{\PP (V^\vee)}(F_1, F_2(-2H')) = 0
	\end{align*}
and
	\begin{align*} & \Hom(A_1 \otimes B_1 \otimes F_1, A_2(-H_1) \otimes B_2 \otimes F_2(-H')) \\
	& \bigoplus  \Hom(A_1 \otimes B_1 \otimes F_1, A_2 \otimes B_2(-H_2) \otimes F_2(-H')) 
	= 0,
	\end{align*}
where the vanishings are similarly from K\"unneth formula and $\Hom_{X_1}(A_1, A_2(-H_1)) = 0$ for $i_1 \ge i_2 \ge 1$, $\Hom_{X_2}(B_1, B_2(-H_2)) = 0$ for $j_1 \ge j_2 \ge 1$.

The fact that the image of (III) is left orthogonal to the image of (I) is from:	
\begin{align*}
	& \Hom_\shH (\delta_{\shH}^* (A_1 \otimes B_2 \otimes F), \iota_1^* (C \otimes B_2) )   \\
	 = & \Hom_{X_1 \times X_2 \times \PP(V^\vee)} (A_1 \otimes B_1 \otimes F, (\delta_1 \times \id_{X_2})_*\, \iota_{1*} \iota_1^* (C \otimes B_2)) \\
	 = & \Hom_{X_1 \times X_2 \times \PP(V^\vee)}(A_1 \otimes B_1 \otimes F,  (\delta_{1*} \times \id) \{\sO(-H_2 - H') \to \sO\} \otimes C \otimes B_2\\
	 = & \{ \Hom_{X_1 \times X_2 \times \PP(V^\vee)}(A_1 \otimes B_1 \otimes F,  \delta_{1*} C(-H') \otimes B_2(-H_2)) \to  \\
	  & \to \Hom_{X_1 \times X_2 \times \PP(V^\vee)}(A_1 \otimes B_1 \otimes F,  \delta_{1*} C \otimes B_2  \}  = 0,
	\end{align*}	
where $\delta_{\shH}^* (A_1 \otimes B_2 \otimes F)$ and $\iota_1^* (C \otimes B_2)$ are the  generators of the categories in (III) and respectively (I), where $A_1 \in \shA^{(1)}_{i_1}(i_1 H_1)$, $B_1 \in \shA^{(2)}_{i_2}(i_2 H_2)$, $C \in  \shA^{(1), \hpd} $, $B_2 \in \shA_{j}^{(2)}(j H_2)$, $F \in D(\PP V^\vee)$, $i_1,i_2, j \ge 1$. The last equality we use the fact that $\shA^{\hpd, (1)}$ is $\PP(V^\vee)$-linear, $i_1 \ge 1$, and $\delta_{1*} ( \shA^{(1), \hpd} ) \subset \shA_0^{(1)} \otimes D(\PP V^\vee)$ from the definition of $1$-HPD. The same argument shows image of (III) is left orthogonal to (II).

To show the image of (I) and (II) are complete orthogonal to each other, it suffices to observe for $C_k \in  \shA^{(k), \hpd} $, $A_k \in \shA_{i_k}^{(k)}(i_k H_k)$, $k=1,2$, $i_k \ge 1$, that
	\begin{align*} & \Hom_{\shH} (\iota_2^* (C_2 \otimes A_1), \iota_1^* (C_1 \otimes A_2)) = \Hom_{\shH_{X_2} \times X_1}(C_2 \otimes A_1, \iota_{2*} \iota_1^*(C_1 \otimes A_2) ) \\
	 = & \Hom_{\shH_{X_2} \times X_1}(C_2 \otimes A_1, (\delta_2\times\Id_{X_1})^* \, (\delta_{1} \times \Id_{X_2})_* (C_1 \otimes A_2)) \\
	 = & \Hom_{X_1 \times X_2 \times \PP(V^\vee)}( \delta_{2 !} C_2 \otimes A_1, \delta_{1*} C_1 \otimes A_2) = 0.
	\end{align*}
The last equality is from $\delta_{1*} C \in \shA_0^{(1)} \boxtimes D(\PP V^\vee)$, and the $A_1 \in \shA_{i_1}^{(1)}(i_1 H_1)$ with $i_1 \ge 1$, therefore the $\Hom_{X_1}$-factor of the above $\Hom$-space is zero. 

The characterization of the orthogonal follows from adjunctions. Denote $\shC'$ the right orthogonal of the images of (I),(II),(III) in $\shH(\shA^{(1)},\shA^{(2)})$. For any $C \in \shC'$, $\iota_{1*} C$ is right orthogonal to $ \shA^{(1), \hpd}  \otimes \shA^{(2)}_{j}(j H_2)$ for all $j \ge 1$, and the $\delta_{\shH *} C$ is right orthogonal to $\shA^{(1)}_{i_1}(i_1(H_1)) \otimes \shA^{(2)}_{j_1}(j_1 H_2) \otimes D(\PP (V^\vee))$ for all $i_1,j_1 \ge 1$. But since $\delta_{\shH *}  C = (\delta_1 \times \Id_{X_2})_* \, \iota_{1*} C$, the latter condition implies $\iota_{1*} C$ is also right orthogonal to $\delta_1^*( \shA^{(1)}_{i_1}(i_1(H_1)) \otimes D(\PP V^\vee)) \otimes \shA^{(2)}_{j_1}(j_1 H_2)$ for all $i_1,j_1 \ge 1$. From the case $n=1$, i.e. the semiorthogonal decomposition for $\shH_{\shA^{(1)}}$:
	$$\shH_{\shA^{(1)}} = \big \langle  \shA^{(1), \hpd} , \delta_1^*( \shA^{(1)}_{1}(H_1), \cdots,  \delta_1^*( \shA^{(1)}_{m_1 - 1}( (m_1-1) H_1)\big \rangle,$$
 we have $\iota_{1*} C \in \shA^{(1), \hpd}  \otimes \shA^{(2)}_0$. Then $\delta_{\shH *}  C = (\delta_1 \times \Id_{X_2})_* \, \iota_{1*} C \in \shA_0^{(1)} \otimes \shA_0^{(2)} \otimes D(\PP (V^\vee))$.

On the other hand, for any element in the $2$-HPD category $C \in \shC$, i.e. $\delta_{\shH *}  C = (\delta_1 \times \Id_{X_2})_* \, \iota_{1*} C  = (\delta_2 \times \Id_{X_1})_* \, \iota_{2*} \in \shA_0^{(1)} \otimes \shA_0^{(2)} \otimes D(\PP (V^\vee))$, then it is obvious $C$ is right orthogonal to the image of (III). Also from the characterization of $ \shA^{(k), \hpd} $ we know $\iota_{1 *} C \in  \shA^{(1), \hpd}  \otimes \shA_0^{(2)}$ and  $\iota_{2 *} C \in  \shA^{(2), \hpd}  \otimes \shA_0^{(1)}$. This implies $C$ is right orthogonal to the image of (I) and (II). Therefore $C \in \shC'$. Hence $\shC' = \shC$.
 \end{proof}

\section{Categorical joins in Disjoint situation} \label{app:proof:thm:min-rep}
	 
\begin{proof}[Proof of Thm. \ref{thm:min-rep}] We first show the following sequence of admissible subcategories:
	$$\shJ_0, \shJ_1(H), \ldots, \shJ_{m-1} ((m-1) H),$$
 of $P(\shA^{(1)}, \ldots,\shA^{(n)})$ is semiorthogonal, where $\shJ_i: = p_n^* \bar{\shJ}_i$, and $\bar{\shJ}_i$ is defined by (\ref{eqn:barJ>0}). It suffices to show for any integer $t$ such that $1 \le t \le j_1 + \cdots +  j_n + n-1$,
	$$\Hom\big(p_n^* (\bigotimes_{k=1}^n \foa_{j_k}^{(k)}) (tH)\ ,\, p_n^* \bigotimes_{k=1}^n \foa_{i_k}^{(k)}\big) = 0.$$
As $p_n$ defined in (\ref{eq:pn}) is a $\PP^{n-1}$-bundle, we have for any $t \ge 1$,
 	$$p_{n!} \sO(tH) = \bigoplus_{\substack {t_1 + \ldots + t_n = t \\ t_1, \ldots, t_n \ge 1}} \sO(\sum_{k=1}^n t_k H_k)[n-1].$$
Therefore from adjunction of $p_{n!}$ and $p^*_n$, to show the desired semiorthogonal condition it suffices to show
	$$\Hom\big(\bigotimes_{k=1}^n \foa_{j_k}^{(k)}(t_k H_k) \ , \bigotimes_{k=1}^n \foa_{i_k}^{(k)}\big) = 0.$$
This always holds since at least for one $k$ it holds that $t_k \le j_k$, therefore above $\Hom$ space vanishes from the semiorthogonal sequence of $\shA^{(k)}$ defining $\foa_{j}^{(k)}$.

Next, we show the category $\shJ$ defined by (\ref{eq:Jmin}) is a $\PP(V)$-linear, equivalent to the categorical join, and is minimal among all representatives. 

To illustrate we show in the case of $n=3$. The general situation follows from a similar argument. Back to the situation of the beginning of \S \ref{sec:disjoin:joins}, where $P_{(12)3}$ and $P_{1(23)}$ are the two iterated $\PP^1$-bundles. Denote the $\PP^1$-bundle map by $p_{(12)3}: P_{(12)3}  \to P_{12} \times X_3$ and $p_{1(23)}: P_{1(23)}  \to X_1 \times J_{23}$. For simplicity write (\ref{eq:pn}) as:
	$$p \colon P : = P(X_1,X_2,X_3) \to X_1 \times X_2 \times X_3.$$
Then there are birational morphisms 
	$$\pi_{1} \colon P_{(12)3} \to P  \quad \text{and} \quad \pi_{2} \colon P_{1(23)} \to P$$
over $X_1 \times X_2 \times X_3$, which respectively send the divisors
	 $$\PP_{P_{12} \times X_3}(\sO(-H_3)) \subset P_{(12)3}  \quad \text{and} \quad
	\PP_{X_1 \times P_{23}}(\sO(-H_{1}) ) \subset P_{1(23)}$$
to the codimension-$2$ subschemes
	$${S}_3 = \PP_{X_1 \times X_2 \times X_3}(\sO(-H_3)) \subset P \quad \text{and} \quad {S}_1 = \PP_{X_1 \times X_2 \times X_3}(\sO(-H_1)) \subset P.$$
Therefore the blow-ups of $P_{(12)3}$ and respectively  $P_{1(23)}$ along subschemes
	$$\PP_{\PP_{X_1 \times X_2}(\sO(-H_1))}(\sO(-H_3)) \subset P_{(12)3} \quad \text{and} \quad \PP_{\PP_{X_2 \times X_3}(\sO(-H_3))}(\sO(-H_1)) \subset P_{1(23)}$$
coincide, and both are equal to $\tilde{P}: = {\rm Bl}_{S_1 \sqcup S_3} P$, the blowing up of $P$ along $S_1 \sqcup S_3$. Denote the blowing-up morphisms by
	$\pi_{(12)3} \colon \tilde{P} \to P_{(12)3}$ and $\pi_{1(23)} \colon \tilde{P} \to P_{1(23)}.$
Then we have the following commutative diagram:
	\begin{equation}\label{diagram:ass}
	\begin{tikzcd}[column sep = 4.6 em]
	& \tilde{P} \ar{ld}[swap]{\pi_{(12)3}} \ar{rd}{\pi_{1(23)}}  \ar{dd}{\pi}& \\
	P_{(12)3} \ar{rd}{\pi_1}  \ar{dd}[swap]{p_{(12)3}}& & P_{1(23)} \ar{ld}[swap]{\pi_2}  \ar{dd}{p_{1(23)}} \\
	& P \ar{d}{p} & \\
	\rJ_{12} \times X_3  \ar{r}{p_{12} \times \Id_3}	&	 X_1 \times X_2 \times X_3 	& X_1 \times \rJ_{23}   \ar{l}[swap]{\Id_1 \times p_{23}}. \\
	\end{tikzcd}
	\end{equation}
Note that above geometric constructions  ($P_{(12)3}, P_{1(23)}, P, \widetilde{P}$, etc)  can clearly be applied to $\shA^{(k)} \subset D(X_k)$, $k=1,2,3$. By definition there are natural inclusions 
	\begin{align*}
	(\shA^{(1)} \ccup \shA^{(2)}) \ccup  \shA^{(3)}	 \subset P_{(12)3}(\shA^{(1)}, \shA^{(2)}, \shA^{(3)}) \subset D(P_{(12)3}); \\
 	\shA^{(1)} \ccup (\shA^{(2)} \ccup \shA^{(3)})   \subset P_{1(23)}(\shA^{(1)}, \shA^{(2)}, \shA^{(3)}) \subset  D(P_{1(23)}); 
	\end{align*}
and 
	$$\shJ   \subset P(\shA^{(1)}, \shA^{(2)},\shA^{(3)}) \subset D(P).$$
	
We want to show that :(i) the associativity of categorical joins in this case is now explicitly given by the $\PP(V)$-linear equivalence:
	$$\pi_{(12)3}^* \circ \pi_{1,23 \,*} \colon (\shA^{(1)} \ccup \shA^{(2)}) \ccup  \shA^{(3)}  \simeq \shA^{(1)} \ccup (\shA^{(2)} \ccup \shA^{(3)}),$$	
and that (ii) the fully faithful functors $\pi_1^*$ and $\pi_2^*$ induces equivalence of categories:
	\begin{equation*}
	\label{eq:pi1,pi2}
	\pi_1^* \colon \shJ \simeq (\shA^{(1)} \ccup \shA^{(2)}) \ccup \shA^{(3)}, \quad \text{and} \quad \pi_2^* \colon \shJ \simeq \shA^{(1)}  \ccup (\shA^{(2)} \ccup \shA^{(3)}).
	\end{equation*}
In fact, apply Prop. \ref{prop:lef:cat_join} to the categorical join of $\shA^{(1)} \ccup \shA^{(2)}$ and $\shA^{(3)}$, and respectively $\shA^{(1)}$ and $\shA^{(2)} \ccup \shA^{(3)}$, we see that the two representatives $(\shA^{(1)} \ccup \shA^{(2)}) \ccup \shA^{(3)}$ and $\shA^{(1)}  \ccup (\shA^{(2)} \ccup \shA^{(3)})$ both admit Lefschetz decompositions, with Lefschetz centers
	\begin{align*}
	&((\shA^{(1)} \ccup \shA^{(2)}) \ccup  \shA^{(3)})_0 = p_{(12)3}^* (p_{12}^* (\shA_0^{(1)} \otimes \shA_0^{(2)}) \otimes \shA_0^{(3)}) \subset D(P_{(12)3}); \\
 	&(\shA^{(1)} \ccup (\shA^{(2)} \ccup \shA^{(3)}))_0 = p_{1(23)}^* ( \shA_0^{(3)} \otimes p_{23}^* (\shA_0^{(2)} \otimes \shA_0^{(3)})) \subset D(P_{1(23)}).
	\end{align*}
But by the commutative diagram, they are just the fully faithful images of 
	$$\bar{\shJ}_0 = p^* (\shA_0^{(1)} \otimes \shA_0^{(2)} \otimes \shA_0^{(3)}) \subset D(P)$$
under the $P$-linear morphisms $\pi_1^*$ and respectively $\pi_2^*$. Similarly for other Lefschetz components, i.e. the $i$-th Lefschetz components of both representatives are simply given by the image of $\bar{\shJ}_i$ under $\pi_1^*$ and respectively $\pi_2^*$. It is clear then the $P$-linear functors 
	$$\pi_{(12)3}^* \circ \pi_{1,23 \,*}, \quad \text{and its left adjoint} \quad \pi_{1(23)}^* \circ \pi_{12,3 \,!}$$
induce an equivalence of the Lefschetz categories $(\shA^{(1)} \ccup \shA^{(2)}) \ccup  \shA^{(3)}$ and $\shA^{(1)} \ccup (\shA^{(2)} \ccup \shA^{(3)})$, and that there are equivalences of categories induced by adjoint functors:
	$$
	\pi_1^* \colon \shJ \simeq (\shA^{(1)} \ccup \shA^{(2)}) \ccup \shA^{(3)} \colon \pi_{1*}~, \quad \text{and} \quad \pi_2^* \colon \shJ \simeq \shA^{(1)}  \ccup (\shA^{(2)} \ccup \shA^{(3)}) \colon \pi_{1*} ~.
	$$

Notice that the functors $\pi_{(12)3}^*,~  \pi_{1,23}^*, ~\pi_1^*, ~\pi_2^*$ and their adjoints are all $\PP(V)$-linear. Since the $\PP(V)$-linear structures on 
$(\shA^{(1)} \ccup \shA^{(2)}) \ccup  \shA^{(3)}$ and $\shA^{(1)} \ccup (\shA^{(2)} \ccup \shA^{(3)})$ are both compatible with the $\PP(V)$-linear structure on $P$, which comes from the morphism $P \to \PP(V \oplus V \oplus V) \dashrightarrow \PP(V)$ (where the last map is the linear projection from the linear subspace $\PP(\Delta_{12} \oplus \Delta_{23})$, and $V \simeq \Delta_{i,j}$ are the diagonal of the direct sum $V \oplus V$ of the $i$-th and $j$-th summand). Therefore the $\PP(V)$-linear structures are compatible under above equivalences, and they hence induce a unique $\PP(V)$-linear structure on $\shJ$. The same argument clearly also works for other combinations like $(\shA^{(1)}  \ccup \shA^{(3)}) \ccup \shA^{(2)}$. This finish the proof of theorem for $n=3$. The general $n$ case follows from a similar argument.  \end{proof}
	 
\begin{remark} It would be interesting to give an intrinsic description of $\shJ$ as a $\PP(V)$-linear subcategory of $P(\shA^{(1)}, \ldots,\shA^{(n)})$.
\end{remark}


\end{document}